\documentclass[a4paper,10pt]{article}
\usepackage[utf8x]{inputenc}
\usepackage{amsmath,amssymb}
\usepackage{tikz,pgf}
\usepackage{amsthm}
\usepackage{caption}



\usetikzlibrary{calc,through,intersections,arrows}

\newcommand{\ZZ}{\mathbb{Z}}

\DeclareMathOperator{\id}{id}
\DeclareMathOperator{\Aut}{Aut}

\renewcommand{\theenumii}{(\alph{enumii})}

\makeatletter
\renewcommand{\p@enumii}{}
\makeatother

\newtheorem{theorem}{Theorem}[section]
\newtheorem{lemma}[theorem]{Lemma}
\newtheorem{corollary}[theorem]{Corollary}
\newtheorem{proposition}[theorem]{Proposition}

\theoremstyle{definition}
\newtheorem{definition}{Definition}[section]
\newtheorem{construction}[definition]{Construction}
\newtheorem{remark}[theorem]{Remark}

\newcommand\liaisonp{liaison edges}
\newcommand\liaisons{liaison edge}

\newcounter{brokenenumi}

\title{Characterisation of symmetries of unlabelled triangulations and
	its applications\thanks{An extended abstract of this paper has been accepted for the proceedings of EUROCOMB 2015.}}
\author{Mihyun Kang\thanks{Supported by Austrian Science Fund (FWF): P27290 and W1230 II} and Philipp Sprüssel\textsuperscript{$\dagger$}}

\begin{document}

\maketitle

\begin{abstract}
  We give a full characterisation of the symmetries of unlabelled
  triangulations and derive a constructive decomposition of unlabelled
  triangulations depending on their symmetries. As an application of
  these results we can deduce a complete enumerative description of
  unlabelled cubic planar graphs.
\end{abstract}

\section{Introduction}\label{sec:intro}

One of the most studied problems in enumerative combinatorics has been
the enumeration of graphs \emph{embedded} or \emph{embeddable} on a
surface, in particular \emph{planar} graphs and triangulations.
Enumeration of labelled planar graphs, maps, and triangulations~\cite{BenderGaoWormald,GN,MSW,OPT03,Tutte-census,censusmaps63,Tutte-polyhedra},
properties of random labelled planar graphs like connectedness~\cite{GN,MSW}, degree distribution and maximum
degree~\cite{Dowden,DGN-planardegree,pana-maxdeg,GaoWormald,GMSW,MR,pana-deg},
containment of subgraphs~\cite{Dowden-evolution,FP-core,GN,MSW,PS-maximalsubgraph},
and random sampling~\cite{BGK-sampling,Fusy-sampler}
have been studied intensively. In contrast to this abundance of
results, many structural and enumerative problems concerning
\emph{unlabelled} (i.e.~non-isomorphic) graphs on a surface are still
open. In particular, the fundamental problem of determining the
asymptotic number of unlabelled planar graphs remains unsolved. The
best known partial results are enumerations of subfamilies of
unlabelled planar graphs such as outerplanar graphs~\cite{Outerplanar}
or series parallel graphs~\cite{subcritical}.

In his seminal work~\cite{censusmaps63}, Tutte conjectured that almost
all planar \emph{maps} (i.e.\ graphs \emph{embedded} on a sphere) are
asymmetric---a conjecture that was later proved by Richmond and
Wormald~\cite{RW95}. While this tells us that almost all planar maps
have no non-trivial automorphisms, the opposite is true for planar
\emph{graphs}: McDiarmid, Steger, and Welsh~\cite{MSW} showed that
almost all planar graphs have exponentially many automorphisms. Thus,
it is impossible to derive the asymptotic number of unlabelled planar
graphs from that of labelled planar graphs.

One of the fundamental tools for the enumeration of graphs and maps is
\emph{constructive decomposition}. The most prominent example is
Tutte's decomposition~\cite{censusmaps63}: 2-connected graphs can be
characterised by three disjoint subclasses of graphs, each of which is
decomposed into smaller building blocks, with 3-connected graphs as
one of the base cases, and vice versa, the building blocks construct
all possible 2-connected graphs. Constructive decompositions can be
interpreted as functional operations of generating functions that
encode the enumerative information of the class of graphs or maps that
is being decomposed. Following these lines, Chapuy et
al.~\cite{grammar} used the decomposition from~\cite{censusmaps63} to
derive a grammar that allows to transfer enumeration results for the
3-connected graphs in a given graph class $\mathcal{G}$ to the whole
class $\mathcal{G}$. As 3-connected planar graphs have a unique
embedding \emph{up to orientation} by Whitney's Theorem~\cite{Whitney}, the
problem of enumerating labelled planar graphs is reduced to the
enumeration of labelled 3-connected planar \emph{maps}.

For unlabelled 3-connected graphs, however, the two embeddings
provided by Whitney's Theorem are not necessarily distinct; whether we
have one or two distinct embeddings will depend on the symmetries of
the graph. A better understanding of the symmetries of 3-connected
planar graphs is therefore the key for the enumeration of unlabelled
planar graphs.

In this paper, we derive a complete description of the automorphisms
of unlabelled planar \emph{triangulations}, planar maps in which every
face boundary is a triangle (in other words, maximal planar maps). We also develop a constructive decomposition
depending on their symmetries. While triangulations are one of the
most fundamental classes of planar maps and thus their symmetries are
interesting in their own right, the results of this paper can be
extended even further: the \emph{duals} of triangulations are
precisely the 3-connected \emph{cubic} planar maps and thus the
constructive decomposition developed in this paper together with the
information about the symmetries of the maps in question can be used
to obtain an enumeration of unlabelled 3-connected cubic planar
\emph{graphs}. Using the grammar of~\cite{grammar}, this provides a
complete description of all unlabelled cubic planar graphs~\cite{cubicplanar}.
We believe that the insight gained in this work can be applied to study
the symmetry and component structure of unlabelled planar graphs, in
particular that of 3-connected unlabelled planar graphs, by carefully
characterising planar graphs with different types of symmetries.

The constructive decomposition of triangulations will consist of two
parts: the characterisation of the building blocks and the
construction of how the building blocks will be merged in order to
construct the triangulations. The building blocks will depend on the
type of symmetries a triangulation $T$ has: \emph{reflective}
symmetries, \emph{rotative} symmetries, or both. There will be three
classes of basic building blocks, called \emph{girdles}, \emph{fyke nets}, and
\emph{spindles}, each one corresponding to one of the three cases for
the symmetries of $T$. In each case, $T$ will contain a unique
subgraph $G$ from the respective class of base cases. Vice versa, we
will show that $T$ can be constructed from $G$ by \emph{inserting}
planar maps from some additional classes of maps into some of the faces
of $G$. The construction of inserting maps into faces is similar to the process used to obtain
\emph{stack triangulations}, objects that proved to have various
applications in geometry~\cite{stacktriangulations,stack-lattice,stack-4colorable,stack-schnyder}.

Part of our work is inspired by Tutte~\cite{Tutte-polyhedra}, who
derived decompositions of triangulations with reflective symmetries
and of triangulations with rotative symmetries (with the additional
property that the order of the automorphism is prime). For our
purposes, we need to consider \emph{all} possible symmetries of
triangulations and develop constructive decompositions for all three
cases (reflective, rotative, or both types of symmetries). Our
decomposition for the case of reflective symmetries will be very close
to Tutte's decomposition. Tutte's decomposition for rotative
symmetries, however, is not unique (not even when the order is prime)
and thus not a constructive decomposition. Our constructive
decomposition for rotative symmetries will only bear slight
resemblance to Tutte's decomposition. The case of both types of
symmetries has not been considered before.

This paper is organised as follows. After stating the necessary
notation and basic facts in Section~\ref{sec:preliminaries}, we
prove the aforementioned characterisation of symmetries as reflective
or rotative in Section~\ref{sec:symmetries}. In
Sections~\ref{sec:reflective} to~\ref{sec:both}, we then derive the
constructive decomposition of triangulations separately for
triangulations with reflective symmetries, with rotative symmetries,
and with both types of symmetries. We will then show in
Section~\ref{sec:constructions} how to construct the basic building
blocks and discuss the results obtained and the further work in
Section~\ref{sec:discussion}.

\section{Preliminaries}\label{sec:preliminaries}

All graphs and maps considered in this paper are
\emph{unlabelled} (i.e.\ are isomorphism classes of labelled graphs)
and \emph{simple} (i.e.\ no two edges have the same two end
vertices). Call a triangulation \emph{trivial} if it has at most four
vertices, so its underlying graph is a triangle or the complete
graph $K_4$ on four vertices. In view of the results of this paper,
these trivial triangulations represent degenerate cases of the
structures considered. In order to keep the results simple, we will
thus consider only non-trivial triangulations. Note that in a
non-trivial triangulation, no two faces have the same set of
vertices. For the rest of this paper, all triangulations are
considered to be non-trivial.

A \emph{face} of a planar map $G$ on a sphere $S$ is a
connected component of $S\setminus G$. We refer to the
vertices, edges, and faces of $G$ as its \emph{cells} of
dimension $0$, $1$, and $2$, respectively. Two cells of different
dimension are called \emph{incident} if one is contained in the
(topological) boundary of the other. Two cells of the same dimension
are \emph{adjacent} if there is a third cell incident with both.

An \emph{isomorphism} between planar maps $G,H$ is a bijective map
$\varphi\colon G\to H$ that maps each cell to a cell of the same
dimension and preserves incidencies. If $G=H$, then we call $\varphi$
an \emph{automorphism}. Note that for every isomorphism $\varphi$ of
planar maps, we can find a homeomorphism of the sphere that maps
every point in a cell $c$ to a point in the cell $\varphi(c)$. We can
therefore view isomorphisms of planar maps as special
homeomorphisms of the sphere. The automorphisms of a given
triangulation $T$ form a group which is denoted by $\Aut(T)$. A cell
$c$ is \emph{invariant} under a given automorphism $\varphi$ if
$\varphi(c) = c$. We also say that \emph{$\varphi$ fixes $c$}. A set
$A$ of cells is invariant if $\varphi(A)=A$, note that each element
of $A$ does not have to be invariant. The automorphisms under which
a given cell $c$ is invariant form a group; we denote it by $\Aut(c,T)$.

In enumerative combinatorics, triangulations are often considered
with a given \emph{rooting}; in other words, a certain cell---sometimes even
several cells---are required to be invariant under all automorphisms
that are considered. In this paper, all triangulations will have a
single cell $c_0$ as a root; we will thus consider only automorphisms
in $\Aut(c_0,T)$.

The most restrictive kind of rooting is the \emph{strong rooting}
consisting of a vertex, edge, and face that are mutually incident.
Isomorphisms between planar maps $G$ and $H$ with a strong rooting
are always supposed to map roots of $G$ to roots of $H$. We
will later see (Lemma~\ref{lem:identity}) that a triangulation with a
strong rooting has only the identity as an automorphism.

An explicit formula for the
number of triangulations with a strong rooting has been obtained by
Tutte~\cite{Tutte-census}. More generally, Brown~\cite{Brown} derived
a formula for the number of \emph{near-triangulations}. A planar map
$N$ with a strong rooting consisting of a face $f_N$, an edge $e_N$,
and a vertex $v_N$ is called a \emph{near-triangulation} if $f_N$ is
bounded by a cycle of any length $\ge 3$ while all other faces are bounded by
triangles (see Figure~\ref{fig:neartriangulation}). The root face
$f_N$ is called the \emph{outer face} of $N$, all vertices and edges
on its boundary---in particular the root vertex $v_N$ and the root
edge $e_N$---are called \emph{outer vertices} or \emph{outer edges} of
$N$, respectively. All other vertices, edges, and faces of $N$ are its
\emph{inner vertices}, \emph{inner edges}, or \emph{inner faces},
respectively. The number of near-triangulations with a strong
rooting with $m+3$ outer vertices and $n$ inner vertices is
\begin{equation*}
  A(n,m) = \frac{2(2m+3)!\,(4n+2m+1)!}{(m+2)!\,m!\,n!\,(3n+2m+3)!}\,.
\end{equation*}
The number of triangulations with $n+3$ vertices (i.e.\ $n$ inner vertices) and a strong rooting
is obviously given by $A(n,0)$.

\begin{figure}[htbp]
  \centering
  \begin{tikzpicture}
    \coordinate (A) at (280:2);
    \coordinate (B) at (330:2);
    \coordinate (C) at (25:2);
    \coordinate (D) at (70:2);
    \coordinate (E) at (125:2);
    \coordinate (F) at (180:2);
    \coordinate (G) at (230:2);

    \coordinate (H) at (45:.3);
    \coordinate (I) at (0:1.1);
    \coordinate (J) at (50:1.2);
    \coordinate (K) at (210:1.2);

    \draw[line width=4pt,black!30] (D) -- (E);
    \filldraw[black!30] (D) circle (4pt);
    \draw (A) -- (B) -- (C) -- (D) -- (E) -- (F) -- (G) -- cycle;
    \draw (A) -- (E);
    \draw (A) -- (H);
    \draw (A) -- (I);
    \draw (A) -- (K);
    \draw (B) -- (I);
    \draw (C) -- (I);
    \draw (C) -- (J);
    \draw (D) -- (H);
    \draw (D) -- (J);
    \draw (E) -- (H);
    \draw (E) -- (K);
    \draw (F) -- (K);
    \draw (G) -- (K);
    \draw (H) -- (I) -- (J) -- cycle;

    \filldraw (A) circle (1.5pt);
    \filldraw (B) circle (1.5pt);
    \filldraw (C) circle (1.5pt);
    \filldraw (D) circle (1.5pt);
    \filldraw (E) circle (1.5pt);
    \filldraw (F) circle (1.5pt);
    \filldraw (G) circle (1.5pt);
    \filldraw (H) circle (1.5pt);
    \filldraw (I) circle (1.5pt);
    \filldraw (J) circle (1.5pt);
    \filldraw (K) circle (1.5pt);

    \draw (A) node[anchor=110] {$u_4$};
    \draw (B) node[anchor=160] {$u_5$};
    \draw (C) node[anchor=190] {$u_6$};
    \draw (D) node[anchor=250] {$v_N$};
    \draw (E) node[anchor=290] {$u_1$};
    \draw (F) node[anchor=0] {$u_2$};
    \draw (G) node[anchor=60] {$u_3$};

    \draw ($(D)!.5!(E)$) node[anchor=280] {$e_N$};

    \draw (-2.75,1.25) node {$f_N$};

    \draw[white] (3,0) node {\Large $N$};
  \end{tikzpicture}
  \caption{A near-triangulation $N$ with root face $f_N$, root edge
    $e_N$, and root vertex $v_N$. The outer vertices of $N$ are
    $v_N,u_1,\dotsc,u_6$; the outer edges are
    $e_N=v_Nu_1,u_1u_2,\dotsc,u_5u_6,u_6v_N$.}
  \label{fig:neartriangulation}
\end{figure}
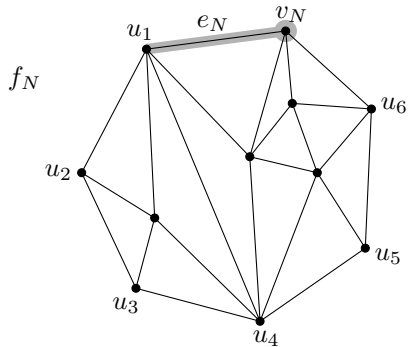

If a graph $G$ contains a cycle $C$ and an edge $e$ that does not
belong to $C$ but connects two vertices of $C$, then we call $e$ a
\emph{chord of $C$}. An inner edge of a near-triangulation $N$ is a
\emph{chord of $N$} if it is a chord of the cycle bounding the outer
face (e.g.\ the edge $u_1u_4$ in Figure~\ref{fig:neartriangulation} is a
chord).

Our goal is to provide a \emph{constructive decomposition} of triangulations.
The reverse direction of this decomposition will rely on the operation
of \emph{inserting near-triangulations} into faces of a given planar
map (see Figure~\ref{fig:inserting}).

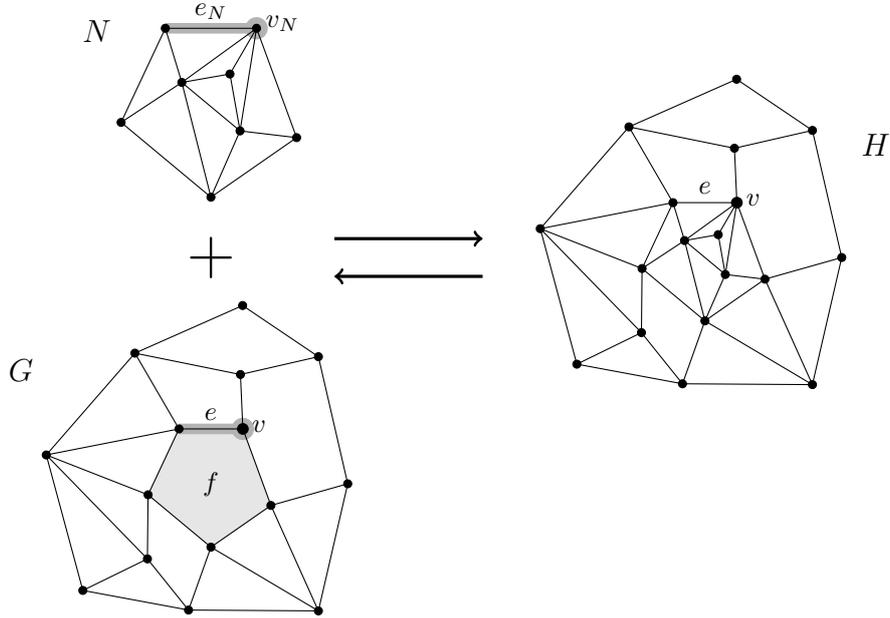
\begin{figure}[htbp]
  \centering
  \begin{tikzpicture}
    \coordinate (Center1) at (0,2);

    \coordinate (A) at (60:1.2);
    \coordinate (B) at (120:1.2);
    \coordinate (C) at (190:1.2);
    \coordinate (D) at (270:1.2);
    \coordinate (E) at (340:1.2);
    \coordinate (F) at (60:.5);
    \coordinate (G) at (140:.5);
    \coordinate (H) at (320:.5);

    \draw[line width=4pt,black!30] (Center1) +(A) -- +(B);
    \filldraw[black!30] (Center1) +(A) circle (4pt);
    \draw (Center1) +(A) -- +(B) -- +(C) -- +(D) -- +(E) -- cycle;
    \draw (Center1) +(A) -- +(F);
    \draw (Center1) +(A) -- +(G);
    \draw (Center1) +(A) -- +(H);
    \draw (Center1) +(B) -- +(G);
    \draw (Center1) +(C) -- +(G);
    \draw (Center1) +(D) -- +(G);
    \draw (Center1) +(D) -- +(H);
    \draw (Center1) +(E) -- +(H);
    \draw (Center1) +(F) -- +(G) -- +(H) -- cycle;
    
    \filldraw (Center1) +(A) circle (1.5pt);
    \filldraw (Center1) +(B) circle (1.5pt);
    \filldraw (Center1) +(C) circle (1.5pt);
    \filldraw (Center1) +(D) circle (1.5pt);
    \filldraw (Center1) +(E) circle (1.5pt);
    \filldraw (Center1) +(F) circle (1.5pt);
    \filldraw (Center1) +(G) circle (1.5pt);
    \filldraw (Center1) +(H) circle (1.5pt);
    
    \draw (Center1) +($(A)!.5!(B)$) node[anchor=south] {$e_N$};
    \draw (Center1) +(A) node[anchor=190] {$v_N$};
    \draw (Center1) +(-1.5,1) node{\large $N$};

    \draw (0,0) node {\Huge $+$};

    \coordinate (Center2) at (0,-3);

    \coordinate (Ap) at ($(0,0)!.7!(A)$);
    \coordinate (Bp) at ($(0,0)!.7!(B)$);
    \coordinate (Cp) at ($(0,0)!.7!(C)$);
    \coordinate (Dp) at ($(0,0)!.7!(D)$);
    \coordinate (Ep) at ($(0,0)!.7!(E)$);
    \coordinate (I) at (75:1.5);
    \coordinate (J) at (120:2);
    \coordinate (K) at (170:2.2);
    \coordinate (L) at (230:1.3);
    \coordinate (M) at (260:1.7);
    \coordinate (N) at (310:2.2);
    \coordinate (O) at (0:1.8);
    \coordinate (P) at (50:2.2);
    \coordinate (Q) at (80:2.4);
    \coordinate (R) at (220:2.2);

    \filldraw[black!10] (Center2) +(Ap) -- +(Bp) -- +(Cp) -- +(Dp) -- +(Ep) -- cycle;
    \draw[line width=4pt,black!30] (Center2) +(Ap) -- +(Bp);
    \filldraw[black!30] (Center2) +(Ap) circle (4pt);
    \draw (Center2) +(Ap) -- +(Bp) -- +(Cp) -- +(Dp) -- +(Ep) -- cycle;
    \draw (Center2) +(Ap) -- +(I);
    \draw (Center2) +(Bp) -- +(J);
    \draw (Center2) +(Bp) -- +(K);
    \draw (Center2) +(Cp) -- +(K);
    \draw (Center2) +(Cp) -- +(L);
    \draw (Center2) +(Dp) -- +(M);
    \draw (Center2) +(Dp) -- +(N);
    \draw (Center2) +(Ep) -- +(N);
    \draw (Center2) +(Ep) -- +(O);
    \draw (Center2) +(I) -- +(J) -- +(K) -- +(L) -- +(M) -- +(N) -- +(O) -- +(P) -- cycle;
    \draw (Center2) +(J) -- +(Q) -- +(P);
    \draw (Center2) +(K) -- +(R);
    \draw (Center2) +(L) -- +(R);
    \draw (Center2) +(M) -- +(R);
    
    \filldraw (Center2) +(Ap) circle (2pt);
    \filldraw (Center2) +(Bp) circle (1.5pt);
    \filldraw (Center2) +(Cp) circle (1.5pt);
    \filldraw (Center2) +(Dp) circle (1.5pt);
    \filldraw (Center2) +(Ep) circle (1.5pt);
    \filldraw (Center2) +(I) circle (1.5pt);
    \filldraw (Center2) +(J) circle (1.5pt);
    \filldraw (Center2) +(K) circle (1.5pt);
    \filldraw (Center2) +(L) circle (1.5pt);
    \filldraw (Center2) +(M) circle (1.5pt);
    \filldraw (Center2) +(N) circle (1.5pt);
    \filldraw (Center2) +(O) circle (1.5pt);
    \filldraw (Center2) +(P) circle (1.5pt);
    \filldraw (Center2) +(Q) circle (1.5pt);
    \filldraw (Center2) +(R) circle (1.5pt);
    
    \draw (Center2) +($(Ap)!.5!(Bp)$) node[anchor=south] {$e$};
    \draw (Center2) +(Ap) node[anchor=190] {$v$};
    \draw (Center2) node {$f$};
    \draw (Center2) +(-2.5,1.5) node {\large $G$};
    
    \coordinate (Center3) at (6.5,0);

    \coordinate (Fp) at ($(0,0)!.7!(F)$);
    \coordinate (Gp) at ($(0,0)!.7!(G)$);
    \coordinate (Hp) at ($(0,0)!.7!(H)$);

    \draw (Center3) +(Ap) -- +(Bp) -- +(Cp) -- +(Dp) -- +(Ep) -- cycle;
    \draw (Center3) +(Ap) -- +(I);
    \draw (Center3) +(Bp) -- +(J);
    \draw (Center3) +(Bp) -- +(K);
    \draw (Center3) +(Cp) -- +(K);
    \draw (Center3) +(Cp) -- +(L);
    \draw (Center3) +(Dp) -- +(M);
    \draw (Center3) +(Dp) -- +(N);
    \draw (Center3) +(Ep) -- +(N);
    \draw (Center3) +(Ep) -- +(O);
    \draw (Center3) +(I) -- +(J) -- +(K) -- +(L) -- +(M) -- +(N) -- +(O) -- +(P) -- cycle;
    \draw (Center3) +(J) -- +(Q) -- +(P);
    \draw (Center3) +(K) -- +(R);
    \draw (Center3) +(L) -- +(R);
    \draw (Center3) +(M) -- +(R);
    \draw (Center3) +(Ap) -- +(Fp);
    \draw (Center3) +(Ap) -- +(Gp);
    \draw (Center3) +(Ap) -- +(Hp);
    \draw (Center3) +(Bp) -- +(Gp);
    \draw (Center3) +(Cp) -- +(Gp);
    \draw (Center3) +(Dp) -- +(Gp);
    \draw (Center3) +(Dp) -- +(Hp);
    \draw (Center3) +(Ep) -- +(Hp);
    \draw (Center3) +(Fp) -- +(Gp) -- +(Hp) -- cycle;
    
    \filldraw (Center3) +(Ap) circle (2pt);
    \filldraw (Center3) +(Bp) circle (1.5pt);
    \filldraw (Center3) +(Cp) circle (1.5pt);
    \filldraw (Center3) +(Dp) circle (1.5pt);
    \filldraw (Center3) +(Ep) circle (1.5pt);
    \filldraw (Center3) +(Fp) circle (1.5pt);
    \filldraw (Center3) +(Gp) circle (1.5pt);
    \filldraw (Center3) +(Hp) circle (1.5pt);
    \filldraw (Center3) +(I) circle (1.5pt);
    \filldraw (Center3) +(J) circle (1.5pt);
    \filldraw (Center3) +(K) circle (1.5pt);
    \filldraw (Center3) +(L) circle (1.5pt);
    \filldraw (Center3) +(M) circle (1.5pt);
    \filldraw (Center3) +(N) circle (1.5pt);
    \filldraw (Center3) +(O) circle (1.5pt);
    \filldraw (Center3) +(P) circle (1.5pt);
    \filldraw (Center3) +(Q) circle (1.5pt);
    \filldraw (Center3) +(R) circle (1.5pt);
    
    \draw (Center3) +($(Ap)!.5!(Bp)$) node[anchor=south] {$e$};
    \draw (Center3) +(Ap) node[anchor=190] {$v$};
    \draw (Center3) +(2.25,1.5) node {\large $H$};
    
    \draw[very thick,->] (0,.25) +($(0,0)!.25!(Center3)$) -- +($(0,0)!.55!(Center3)$);
    \draw[very thick,<-] (0,-.25) +($(0,0)!.25!(Center3)$) -- +($(0,0)!.55!(Center3)$);
  \end{tikzpicture}
  \caption{Inserting a near-triangulation $N$ into the face $f$ of a
    graph $G$ resulting in a graph $H$; this is a
    constructive decomposition of $H$ into $(N,G)$.}
  \label{fig:inserting}
\end{figure}

To make this operation precise, let $N$ be a near-triangulation with
$m+3$ outer vertices and let $G$ be a planar map; denote by $S_N$ and
$S_G$ the spheres on which $N$ and $G$ are embedded, respectively. Suppose
that $f$ is a face of $G$ that is bounded by a cycle of length $m+3$;
let $e$ be an edge on the boundary of $f$ and let $v$ be one of the
end vertices of $e$. We obtain a new planar map $H$ as follows:
Deleting the outer face of $N$ from the sphere $S_N$ results in a
space $D_N$ homeomorphic to the unit disc; similarly, deleting $f$
from the sphere $S_G$ results in a space $D_G$ homeomorphic to the
unit disc. Note that by construction the boundary $C_N$ of $D_N$
(respectively the boundary $C_G$ of $D_G$) is the boundary of the
outer face of $N$ (respectively that of $f$) and thus the point set of
a cycle of length $m+3$. Let $\sigma\colon C_N\to C_G$ be a
homeomorphism that
\begin{itemize}
\item maps vertices to vertices;
\item maps the root vertex $v_N$ of $N$ to $v$; and
\item maps the (point set of the) root edge $e_N$ of $N$ to (the point
  set of) $e$.
\end{itemize}
The quotient space $(D_N \cup D_G)/\sigma$ obtained from the union
$D_N \cup D_G$ by identifying every point $x\in C_N$ with $\sigma(x)$
is a sphere on which a graph $H$ is embedded. We say that $H$ is obtained
from $G$ by \emph{inserting $N$ into $f$ at $v$ and $e$}. If $G$ is
rooted and $f$ is not its root face, then we consider $H$ to have the
rooting it inherits from $G$.

If $T$ is a triangulation and $G$ is a 2-connected subgraph of $T$, then $T$ can
always be obtained from $G$ by inserting near-triangulations into
several of its faces: suppose that for each face $f$ of $G$, we choose an edge
$e_f$ on its boundary and one of its end vertices $v_f$. Then the
near-triangulation $N_f$ that is inserted into $f$ at $v_f$ and $e_f$
in order to obtain $T$ is uniquely defined. We say that $N_f$ is the
near-triangulation \emph{induced by $(T,f)$ at $v_f$ and $e_f$}.

\section{Symmetries of triangulations}\label{sec:symmetries}

Throughout this paper, let $T$ be a triangulation and
choose a cell $c_0$ as the root of $T$. As mentioned before, we will
consider automorphisms in $\Aut(c_0,T)$, i.e. automorphisms of $T$
that fix the root $c_0$.

For every cell $c$ of $T$ of a given
dimension $d$ the numbers of incident cells of dimensions
$d+1\pmod{3}$ and $d+2\pmod{3}$ are the same. We call this number the
\emph{degree} of $c$ and denote it by $d(c)$. Clearly, for a vertex this
notion of degree equals the graph theoretical definition; every
edge has degree $2$; every face of $T$
has degree $3$. 
The \emph{distance} of two cells $c,c'$ is the smallest number $\ell$
for which there is a sequence of $\ell+1$ cells starting at $c$ and
ending at $c'$ such that every two consecutive cells in the sequence
are incident. Note that every two cells have a distance.

Given a cell $c$ of $T$, the set of cells incident with $c$ has a
cyclic order $(c_1,c_2,\dotsc,c_{2d(c)})$ in which two cells
are consecutive if and only if they are incident in the triangulation
(see Figure~\ref{fig:incidentcells1}). This order is unique up to
orientation. Two cells $c_{\alpha},c_{\beta}$ with
$\alpha,\beta\in\{1,2,\dotsc,2d(c)\}$ are said to \emph{lie opposite}
at $c$ if $|\alpha-\beta| = d(c)$. We observe that if $c$ is a face, then its boundary is a triangle and every
vertex $v$ of this triangle is opposite at $c$ to the edge of the triangle
that is not incident with $v$. If $c$ is an edge,
then its two incident faces lie opposite at $c$ and so do its end
vertices. If $c$ is a vertex, the situation
depends on the parity of $d(c)$: for even $d(c)$, every incident edge
lies opposite to another incident edge while every face lies opposite
to a face. For odd $d(c)$, every edge lies opposite to a face.

\begin{figure}[htbp]
  \centering
  \begin{tikzpicture}
    \coordinate (Centre1) at (2.75,-3.5);

    \coordinate (A) at (230:2);
    \coordinate (B) at (315:2);
    \coordinate (C) at (40:2.4);
    \coordinate (D) at (160:1.8);

    \filldraw[black!20] (Centre1) +(A) -- +(B) -- +(C) -- +(D) -- cycle;

    \draw (Centre1) +(A) -- +(B) -- +(C) -- +(D) -- cycle;
    \draw (Centre1) +(A) -- +(0,0) -- +(B);
    \draw (Centre1) +(C) -- +(0,0) -- +(D);
    \draw (Centre1) ++(A) -- +(.2,-.6);
    \draw (Centre1) ++(A) -- +(-.6,-.2);
    \draw (Centre1) ++(B) -- +(-.2,-.6);
    \draw (Centre1) ++(B) -- +(.6,.2);
    \draw (Centre1) ++(C) -- +(.6,-.2);
    \draw (Centre1) ++(C) -- +(.4,.4);
    \draw (Centre1) ++(C) -- +(-.5,.3);
    \draw (Centre1) ++(D) -- +(.2,.6);
    \draw (Centre1) ++(D) -- +(-.4,-.4);

    \filldraw[black!5] (Centre1) +(0,0) circle (2.5pt);
    \draw[line width=.75pt] (Centre1) +(0,0) node[right=2pt] {\Large $c$} circle (2.75pt);
    \filldraw (Centre1) +(A) circle (1.5pt);
    \filldraw (Centre1) +(B) circle (1.5pt);
    \filldraw (Centre1) +(C) circle (1.5pt);
    \filldraw (Centre1) +(D) circle (1.5pt);

    \draw (Centre1) +(1.1,0) node {$c_1$};
    \draw (Centre1) +($(C)!.5!(0,0)$) node[circle,fill=black!20] {$c_2$};
    \draw (Centre1) +(0,.6) node {$c_3$};
    \draw (Centre1) +($(D)!.5!(0,0)$) node[circle,fill=black!20] {$c_4$};
    \draw (Centre1) +(-1,-.3) node {$c_5$};
    \draw (Centre1) +($(A)!.5!(0,0)$) node[circle,fill=black!20] {$c_6$};
    \draw (Centre1) +(.05,-.9) node {$c_7$};
    \draw (Centre1) +($(B)!.5!(0,0)$) node[circle,fill=black!20] {$c_8$};

    \coordinate (Centre2) at (0,0);

    \coordinate (M) at (330:1.5);
    \coordinate (N) at (90:1.4);
    \coordinate (O) at (200:1.7);

    \filldraw[black!5] (Centre2) +(M) -- +(N) -- +(O) -- cycle;

    \draw[line width=3.5pt] (Centre2) +(M) -- +(N);
    \draw[line width=3.5pt] (Centre2) +(N) -- +(O);
    \draw[line width=3.5pt] (Centre2) +(O) -- +(M);
    \draw[line width=2.5pt,black!20] (Centre2) +(M) -- +(N);
    \draw[line width=2.5pt,black!20] (Centre2) +(N) -- +(O);
    \draw[line width=2.5pt,black!20] (Centre2) +(O) -- +(M);

    \draw (Centre2) ++(M) -- +(-.3,-.6);
    \draw (Centre2) ++(M) -- +(.7,.2);
    \draw (Centre2) ++(N) -- +(.8,-.1);
    \draw (Centre2) ++(N) -- +(-.6,.4);
    \draw (Centre2) ++(O) -- +(-.1,.8);
    \draw (Centre2) ++(O) -- +(-.8,-.1);
    \draw (Centre2) ++(O) -- +(.2,-.7);

    \draw (Centre2) +(-.1,.1) node {\Large $c$};
    \filldraw[black!30] (Centre2) +(M) circle (2.5pt);
    \draw[line width=.75pt] (Centre2) +(M) node[anchor=135] {$c_1$} circle (2.75pt);
    \draw (Centre2) +($(M)!.5!(N)$) node[anchor=190] {$c_2$};
    \filldraw[black!30] (Centre2) +(N) circle (2.5pt);
    \draw[line width=.75pt] (Centre2) +(N) node[anchor=255] {$c_3$} circle (2.75pt);
    \draw (Centre2) +($(N)!.5!(O)$) node[anchor=310] {$c_4$};
    \filldraw[black!30] (Centre2) +(O) circle (2.5pt);
    \draw[line width=.75pt] (Centre2) +(O) node[anchor=55] {$c_5$} circle (2.75pt);
    \draw (Centre2) +($(O)!.5!(M)$) node[anchor=80] {$c_6$};

    \coordinate (Centre3) at (6.5,.3);

    \coordinate (P) at (0:1.5);
    \coordinate (Q) at (110:1.6);
    \coordinate (R) at (180:1.7);
    \coordinate (S) at (265:1.7);

    \filldraw[black!20] (Centre3) +(P) -- +(Q) -- +(R) -- +(S) -- cycle;

    \draw[line width=3.5pt] (Centre3) +(P) -- +(R);
    \draw[line width=2.5pt,black!5] (Centre3) +(P) -- +(R);
    \draw (Centre3) +(P) -- +(Q) -- +(R) -- +(S) -- cycle;
    \draw (Centre3) ++(P) -- +(.4,-.6);
    \draw (Centre3) ++(P) -- +(.55,.5);
    \draw (Centre3) ++(Q) -- +(.8,-.1);
    \draw (Centre3) ++(Q) -- +(.3,.6);
    \draw (Centre3) ++(Q) -- +(-.7,.2);
    \draw (Centre3) ++(R) -- +(-.4,.5);
    \draw (Centre3) ++(R) -- +(-.8,.1);
    \draw (Centre3) ++(R) -- +(-.2,-.7);
    \draw (Centre3) ++(S) -- +(-.7,-.2);
    \draw (Centre3) ++(S) -- +(.6,-.3);

    \draw (Centre3) node[circle,fill=black!20] {\Large $c$};
    \filldraw[black!30] (Centre3) +(P) circle (2.5pt);
    \draw[line width=.75pt] (Centre3) +(P) node[anchor=170] {$c_1$} circle (2.75pt);
    \draw (Centre3) +($(0,0)!.5!(Q)$) node[circle,fill=black!20] {$c_2$};
    \filldraw[black!30] (Centre3) +(R) circle (2.5pt);
    \draw[line width=.75pt] (Centre3) +(R) node[anchor=35] {$c_3$} circle (2.75pt);
    \draw (Centre3) +($(0,0)!.5!(S)$) node[circle,fill=black!20] {$c_4$};
    \filldraw (Centre3) +(Q) circle (1.5pt);
    \filldraw (Centre3) +(S) circle (1.5pt);
  \end{tikzpicture}
  \caption{A cyclic order $(c_1,c_2,\dotsc,c_{2d(c)})$ of the cells incident with a cell $c$.}
  \label{fig:incidentcells1}
\end{figure}
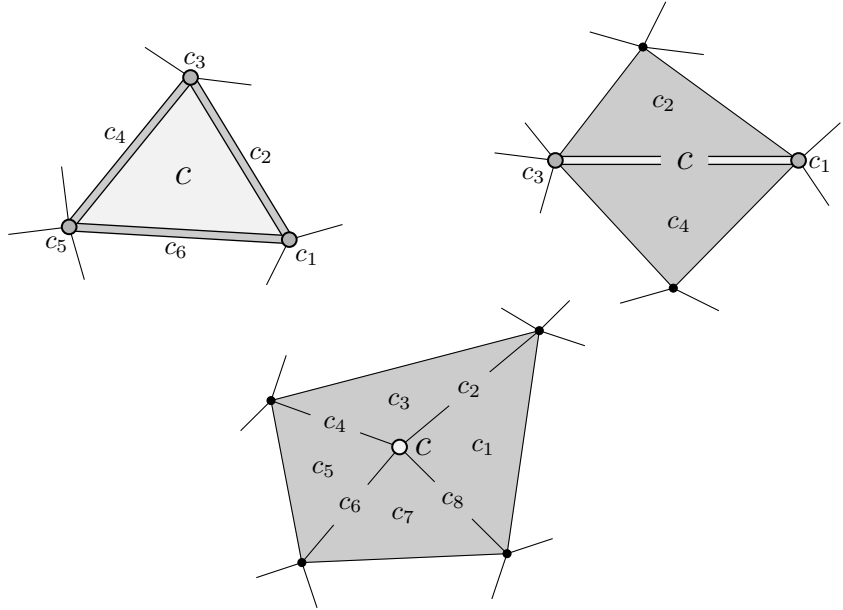

We first observe some basic properties of automorphisms in $\Aut(T)$
and $\Aut(c_0,T)$.

\begin{lemma}\label{lem:identity}
  If three mutually incident cells are invariant under an
  automorphism $\varphi\in\Aut(T)$, then $\varphi$ is the identity.
\end{lemma}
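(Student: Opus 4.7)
The idea is to propagate the invariance from the triple $(v,e,f)$ (a vertex, an edge, and a face that are mutually incident — i.e.\ a flag) outward to every cell of $T$, using the cyclic-order structure around each cell and the connectivity of $T$. The argument splits into a local step (fixing a flag at a cell $c$ forces $\varphi$ to fix every cell incident with $c$) and a global step (iterating along the 1-skeleton).

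\textbf{Local step.} Fix an automorphism $\varphi\in\Aut(T)$ and a cell $c$ of $T$. Because $\varphi$ preserves dimension and incidence, if $\varphi(c)=c$ then $\varphi$ permutes the $2d(c)$ cells incident with $c$, and this permutation respects both their dimensions and the cyclic incidence order $(c_1,\dots,c_{2d(c)})$. Hence $\varphi$ acts on this cyclic sequence as an element of its dihedral symmetry group. The key observation is that the only such element that fixes two consecutive entries $c_i,c_{i+1}$ is the identity on the sequence: inductively, the neighbour $c_{i+2}$ of $c_{i+1}$ different from $c_i$ has a unique image that is adjacent to $\varphi(c_{i+1})=c_{i+1}$ and different from $\varphi(c_i)=c_i$, namely $c_{i+2}$ itself; iterating this fixes every $c_j$. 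Applied to our situation with $c\in\{v,e,f\}$, the hypothesis gives two fixed consecutive entries in the cyclic order around $c$ (for $c=v$, the pair $e,f$; for $c=e$, the pair $v,f$; for $c=f$, the pair $v,e$), and so every cell incident with $v$, with $e$, or with $f$ is fixed by $\varphi$.

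\textbf{Global step.} Let $X$ be the set of cells fixed by $\varphi$. The local step shows that whenever $X$ contains a flag $(v',e',f')$, it also contains every cell incident with $v'$, $e'$, or $f'$. In particular, every edge $e'$ at a fixed vertex $v'$ is fixed, its other endpoint $v''$ is fixed (being the unique cell opposite $v'$ at $e'$), and at least one face $f''$ incident with both $v''$ and $e'$ is fixed; this gives a new fixed flag $(v'',e',f'')$ to which the local step applies. Since the 1-skeleton of $T$ is connected, this propagation reaches every vertex of $T$, and hence (by one final application of the local step at each vertex) every edge and every face. Thus $\varphi$ fixes every cell, so $\varphi=\id$.

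\textbf{Main obstacle.} The only delicate point is the local step: one needs the observation, stated just before the lemma, that the cells incident with a given cell $c$ carry a well-defined cyclic order (unique up to orientation) in which consecutive cells are exactly the incident pairs, and that any automorphism fixing $c$ respects this structure. Once this is in place the dihedral-rigidity argument fixing two consecutive entries is immediate, and the rest of the proof is bookkeeping along a connected graph.
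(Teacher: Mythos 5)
Your proof is correct and follows essentially the same strategy as the paper's: use the rigidity of the cyclic incidence order around a cell (fixing two consecutive cells forces fixing all of them) and then propagate invariance outward using the connectivity of the triangulation. The only cosmetic difference is that the paper organizes the propagation as an induction on distance from $c$, while you route it explicitly through flags along the $1$-skeleton; both amount to the same argument.
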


\begin{proof}
  Let $c$ be one of the cells from the statement and let 
  $(c_1,c_2,\dotsc,c_{2d(c)})$ be the cyclic order of its incident
  cells. The other two cells from the statement are part of this
  order since they are incident with $c$. Since they are incident
  with each other, they have consecutive positions in the order, $c_1$
  and $c_2$, say. Recall that the cyclic order is unique up to
  orientation; therefore, since $c_1$ and $c_2$ are invariant by
  assumption, all cells incident with $c$ are invariant.

  For every cell $c_i$ incident with $c$, the same holds: the cells
  $c$ and $c_{i+1}$ are invariant and consecutive in the cyclic
  order of the incident cells of $c_i$. Thus, all cells incident with
  $c_i$ are invariant. By induction over the distance to $c$, we
  obtain that all cells are invariant and therefore $\varphi$ is the
  identity.
\end{proof}

%

Lemma~\ref{lem:identity} in particular holds for automorphisms that
fix $c_0$: if an automorphism $\varphi\in\Aut(c_0,T)$ fixes two cells
that are incident with each other and with $c_0$, then $\varphi$ is
the identity. This immediately yields the following.

\begin{corollary}\label{cor:incident}
  An automorphism in $\Aut(c_0,T)$ is uniquely determined by its action
  on the cells incident with $c_0$.
\end{corollary}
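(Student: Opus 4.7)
The plan is to reduce the statement directly to Lemma~\ref{lem:identity}. Given two automorphisms $\varphi_1,\varphi_2\in\Aut(c_0,T)$ that agree on every cell incident with $c_0$, I would consider the composition $\psi := \varphi_2^{-1}\circ\varphi_1$, which again lies in $\Aut(c_0,T)$. By construction, $\psi$ fixes $c_0$ and every cell incident with $c_0$. If I can exhibit three mutually incident cells all of which are fixed by $\psi$, then Lemma~\ref{lem:identity} forces $\psi=\id$, and hence $\varphi_1=\varphi_2$.

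The only remaining task is therefore to produce a triple $c_0,c,c'$ of pairwise incident cells such that both $c$ and $c'$ are incident with $c_0$. In a triangulation this is trivial by a short case distinction on the dimension of $c_0$. If $c_0$ is a vertex, pick any incident edge $e$ and either of the two faces bounded by $e$; both are incident with $c_0$. If $c_0$ is an edge, take either endpoint together with either of the two incident faces, whose boundaries contain $c_0$ and thus both endpoints. If $c_0$ is a face, take any edge of its boundary together with one of its endpoints. In each case the resulting triple is mutually incident, all three cells are fixed by $\psi$, and Lemma~\ref{lem:identity} applies.

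There is essentially no obstacle here; the statement is a formal corollary of Lemma~\ref{lem:identity}, and the only content beyond that lemma is the elementary observation that the local incidence pattern at any cell of a triangulation always supplies the required mutually incident triple.
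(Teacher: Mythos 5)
Your proof is correct and follows essentially the same route as the paper: compose one automorphism with the inverse of the other to get an element of $\Aut(c_0,T)$ fixing all cells incident with $c_0$, then pick two of those that are incident with each other (and with $c_0$) to invoke Lemma~\ref{lem:identity}. The only difference is that you spell out the case distinction on the dimension of $c_0$, which the paper leaves implicit.
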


Since an automorphism $\varphi\in\Aut(c_0,T)$ can only map cells of a
given dimension to cells of the same dimension and since the cyclic
order of the cells incident with $c_0$ is unique up to orientation, we
obtain the following.

\begin{corollary}\label{cor:dihedral}
  For every cell $c$ of $T$, $\Aut(c,T)$ is isomorphic to a subgroup of the dihedral group
  $D_{d(c)}$.
\end{corollary}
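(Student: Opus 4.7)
The plan is to build an embedding $\Aut(c,T)\hookrightarrow D_{d(c)}$ from the action on the set $I(c)=\{c_1,\dotsc,c_{2d(c)}\}$ of cells incident with $c$, taken in its cyclic order. First I would verify that the restriction map $\rho\colon\Aut(c,T)\to\mathrm{Sym}(I(c))$ is injective: if $\rho(\varphi)=\rho(\psi)$, then $\varphi\psi^{-1}$ fixes the three mutually incident cells $c$, $c_1$, $c_2$, so Lemma~\ref{lem:identity} forces $\varphi\psi^{-1}=\id$.

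Next I would constrain the image. Every $\varphi\in\Aut(c,T)$ is a homeomorphism of the sphere, and since the cyclic order on $I(c)$ is unique up to orientation (as noted before the corollary), $\rho(\varphi)$ must preserve this cyclic order up to reversal. Moreover, $\varphi$ preserves dimensions, and along the cyclic order $(c_1,\dotsc,c_{2d(c)})$ the dimensions alternate between the two values $d+1\pmod{3}$ and $d+2\pmod{3}$; hence $\rho(\varphi)$ also preserves this alternating $2$-colouring. Therefore $\rho(\Aut(c,T))$ is contained in the subgroup $H\le\mathrm{Sym}(I(c))$ of permutations that respect both the cyclic order (up to reversal) and the $2$-colouring by dimension.

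Finally I would identify $H$ with $D_{d(c)}$. Any permutation of a $2d(c)$-cycle preserving the cyclic order up to reversal has the form $c_i\mapsto c_{\varepsilon i+k}$ with $\varepsilon\in\{+1,-1\}$ and $k\in\ZZ/2d(c)\ZZ$; the dimension-preserving condition is precisely that $k$ is even. So $H$ consists of the $d(c)$ rotations $c_i\mapsto c_{i+2j}$ and the $d(c)$ reflections $c_i\mapsto c_{-i+2j}$ for $j\in\ZZ/d(c)\ZZ$, a total of $2d(c)$ elements satisfying the standard dihedral relations; thus $H\cong D_{d(c)}$. Composing with $\rho$ yields the desired embedding. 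The step that requires real care is this last identification: once cyclic-order-preserving permutations are written in the coordinate form above, the dimension-preserving condition collapses to the parity constraint that $k$ is even, and the dihedral structure of $H$ becomes transparent.
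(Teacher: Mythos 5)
Your argument is correct and follows the same route the paper takes, which gives this corollary essentially without proof: it appeals to the fact that an automorphism in $\Aut(c,T)$ is determined by its action on the cells incident with $c$ (Corollary~\ref{cor:incident}, which you re-derive directly from Lemma~\ref{lem:identity}) together with the observation that this action must preserve the cyclic order up to orientation reversal and preserve dimension. You simply make explicit the identification of the resulting permutation group with $D_{d(c)}$, including the parity constraint $k$ even, which the paper leaves to the reader.
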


By definition, every automorphism $\varphi\in\Aut(c_0,T)$ fixes $c_0$.
But, is $c_0$ the \emph{only} invariant
cell under $\varphi$? It is not hard to prove that it is not:

\begin{lemma}\label{lem:invariant}
  For every $\varphi\in\Aut(c_0,T)$, there is at least one cell $c
  \not= c_0$ that is invariant under $\varphi$.
\end{lemma}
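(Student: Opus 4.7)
The plan is to produce the desired invariant cell as the cell containing a fixed point of $\varphi$ found by Brouwer's fixed point theorem applied to a suitably chosen $\varphi$-invariant closed disc in the sphere. I view $\varphi$ as a self-homeomorphism of the sphere $S$ on which $T$ is embedded, as allowed by the preliminaries. The identity case is trivial since a non-trivial triangulation has cells other than $c_0$, so I may assume $\varphi$ is non-trivial.

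First, I would define $U \subseteq S$ to be the union of (the point set of) $c_0$ together with the interiors of all cells incident to $c_0$, and verify that $U$ is an open topological disc. This follows by inspection in each dimension of $c_0$: if $c_0$ is a face then $U = c_0$ is an open disc by definition of a planar embedding; if $c_0$ is an edge then $U$ is the interior of the union of the two incident closed triangles, which are distinct because $T$ is non-trivial; and if $c_0$ is a vertex then $U$ is the interior of the wheel of closed triangles around $c_0$, all distinct because $T$ is simple. Since $\varphi$ fixes $c_0$ and permutes the cells incident to $c_0$, $U$ is $\varphi$-invariant, so its complement $X := S \setminus U$ --- a closed topological disc --- is mapped to itself by $\varphi$.

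Brouwer's fixed point theorem applied to $\varphi|_X$ then yields a point $p \in X$ with $\varphi(p) = p$. This $p$ lies in a unique (open) cell $c$ of $T$, and since the point sets of distinct cells are disjoint and $p \in c \cap \varphi(c)$, we must have $\varphi(c) = c$. Finally $c \ne c_0$ because $p \in X$ while $c_0 \subseteq U$ is disjoint from $X$. The step I expect to require the most care is the topological verification that $U$ is an open disc in the edge and vertex cases --- it is a routine check but genuinely uses that $T$ is simple and non-trivial --- while the extraction of the fixed cell from a fixed point is an immediate consequence of the cells forming a partition of $S$.
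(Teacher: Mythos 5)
Your proof is correct and takes essentially the same route as the paper: both arguments produce a $\varphi$-invariant Jordan curve (the link of $c_0$, which the paper describes as an ``invariant cycle'' and you describe as $\partial U$) and apply Brouwer's fixed-point theorem to the closed disc on the side away from $c_0$. One small caveat on wording: as literally stated, your set $U$ (all cells \emph{incident} to $c_0$, in the paper's symmetric sense) would also include the lower-dimensional cells lying on the boundary of $c_0$ and hence fail to be open when $c_0$ is an edge or a face; your case analysis makes clear you intend $U$ to be the open star of $c_0$ --- i.e.\ $c_0$ together with the open cells having $c_0$ on their boundary --- which is the correct set and does make the argument go through.
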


This can be proved by pure combinatorial means (see
e.g.~\cite{Tutte-polyhedra}), but there is also a simple topological
proof, which we provide below.

\begin{proof}
  We can find a cycle in the underlying graph of $T$ whose set of
  vertices and edges is invariant (see Figure~\ref{fig:invariantcycles}).
  Indeed, if $c_0$ is a face, then its boundary is such a cycle. If $c_0$
  is an edge, then the two faces incident with $c_0$ form an invariant
  set and hence the union of their boundaries, excluding the edge $c_0$, is
  the desired cycle. Finally, if $c_0$ is a vertex, then the vertices
  adjacent to it form the desired cycle together with all edges that lie
  opposite to $c_0$ at some face incident with $c_0$.

  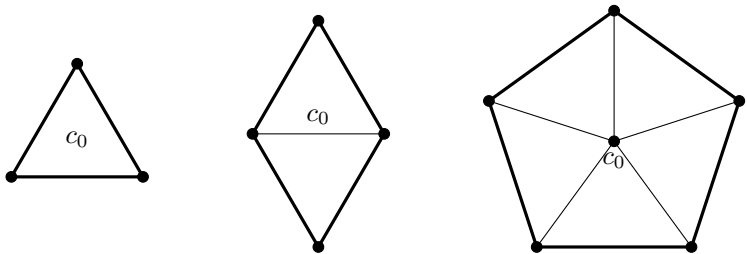
\begin{figure}[htbp]
    \centering
    \hfill
    \begin{tikzpicture}
      \draw[white] (-1,-1.5) -- (1,1.5);
      \coordinate (A) at (90:1);
      \coordinate (B) at (210:1);
      \coordinate (C) at (330:1);
      \draw[very thick] (A) -- (B) -- (C) -- cycle;
      \foreach \x in {A,B,C}
      {
        \filldraw (\x) circle (2pt);
      }
      \draw (0,0) node {$c_0$};
    \end{tikzpicture}
    \hfill
    \begin{tikzpicture}
      \coordinate (A) at (90:1);
      \coordinate (B) at (210:1);
      \coordinate (C) at (330:1);
      \coordinate (D) at (90:.5);
      \coordinate (E) at (270:.5);
      \coordinate (F) at (150:1);
      \coordinate (G) at (270:1);
      \coordinate (H) at (30:1);
      \draw (D) +(B) -- +(C);
      \draw[very thick] (D) +(C) -- +(A) -- +(B);
      \draw[very thick] (E) +(F) -- +(G) -- +(H);
      \foreach \x in {A,B,C}
      {
        \filldraw (D) +(\x) circle (2pt);
      }
      \filldraw (E) +(G) circle (2pt);
      \draw (0,0) node[anchor=south] {$c_0$};
    \end{tikzpicture}
    \hfill
    \begin{tikzpicture}
      \def\u{1.732}

      \coordinate (A) at (0,0);
      \coordinate (B) at (90:\u);
      \coordinate (C) at (162:\u);
      \coordinate (D) at (234:\u);
      \coordinate (E) at (306:\u);
      \coordinate (F) at (18:\u);
      \foreach \x in {B,C,D,E,F}
      {
        \draw (A) -- (\x);
      }
      \draw[very thick] (B) -- (C) -- (D) -- (E) -- (F) -- cycle;
      \foreach \x in {A,B,C,D,E,F}
      {
        \filldraw (\x) circle (2pt);
      }
      \draw (0,-.25) node {$c_0$};
    \end{tikzpicture}
    \hfill{}
    \caption{Finding an invariant cycle.}
    \label{fig:invariantcycles}
  \end{figure}

  The point set of this cycle divides the sphere into two discs, on
  both of which $\varphi$ induces a homeomorphism. By the Brouwer
  fixed-point theorem, both homeomorphisms have a fixed-point and
  hence the cells containing the fixed-points are invariant under
  $\varphi$. Since one of the two discs does not contain $c_0$, we have
  found the desired cell $c$.
\end{proof}

Corollary~\ref{cor:dihedral} provides a nice way of characterising
automorphisms $\varphi\in\Aut(c,T)$ for any given cell $c$: if $\varphi$ is not the
identity, then either
\begin{enumerate}
\item $\varphi$ changes the orientation of the cyclic order of the
  cells incident with $c$, in which case we call $\varphi$
  \emph{reflective at $c$}; or
\item $\varphi$ does not change the orientation of the cyclic order,
  in which case we call $\varphi$ \emph{rotative at $c$}.
\end{enumerate}
Note that the distinction between reflective and rotative
automorphisms is always up to the cell $c$ currently considered---if
an automorphism $\varphi$ of $T$ fixes two different cells $c,c'$,
then it is an element of $\Aut(c,T)$ as well as of $\Aut(c',T)$ and
the decision whether $\varphi$ is reflective or rotative at either
vertex is performed separately in each automorphism group. We will
later see (Corollary~\ref{cor:reflective}) that an automorphism cannot be reflective at one vertex and
rotative at another vertex, but we cannot use this implication yet.

The properties of the automorphism group $D_{d(c_0)}$ of a regular
$d(c_0)$-gon immediately implies the following characterisation of
reflective and rotative automorphisms.

\begin{lemma}\label{lem:fixedcells}
  Suppose that $\varphi\in\Aut(c_0,T)$ is not the identity. Then the
  following holds.
  \begin{enumerate}
  \item\label{fixedcells:refl}
    $\varphi$ is reflective at $c_0$ if and only if it fixes precisely two
    cells incident with $c_0$; these cells lie opposite at $c_0$.
  \item\label{fixedcells:rot}
    $\varphi$ is rotative at $c_0$ if and only if it fixes no cell incident
    with $c_0$.
  \end{enumerate}
\end{lemma}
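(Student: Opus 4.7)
My plan is to analyse $\varphi$ directly via its action on the cyclic order $(c_1,c_2,\dotsc,c_{2n})$ of cells incident with $c_0$, where $n=d(c_0)$. The key observation is that in this order the cells alternate between two dimensions as the index $i$ varies, so any dimension-preserving permutation that respects the cyclic order (up to orientation) must preserve the parity of the index. Since by definition a non-identity $\varphi$ is either rotative or reflective at $c_0$, I will prove the forward directions of both \ref{fixedcells:refl} and \ref{fixedcells:rot}; the converses then follow by dichotomy because the resulting fixed-cell counts---zero versus exactly two---are mutually exclusive.

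For the rotative case, $\varphi$ acts on the cyclic order as a cyclic shift $c_i\mapsto c_{i+k}$ for some $k$, and the parity condition above forces $k$ to be even. If $k=0$, then the three mutually incident cells $c_0$, $c_1$, and $c_2$ are all fixed, so Lemma~\ref{lem:identity} would force $\varphi=\id$, contradicting our assumption. Hence $k\neq 0$, and a non-trivial cyclic shift on $\{c_1,\dotsc,c_{2n}\}$ has no fixed element.

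For the reflective case, $\varphi$ acts as $c_i\mapsto c_{\sigma-i}$ (with indices taken mod $2n$) for some $\sigma$, with fixed cells characterised by the congruence $2i\equiv\sigma\pmod{2n}$. The delicate step is to note that $\sigma$ must be even: otherwise $i$ and $\sigma-i$ would always differ in parity, and $\varphi$ would swap the two dimensions---impossible for an automorphism. Writing $\sigma=2\alpha$, the fixed-cell congruence reduces to $i\equiv\alpha\pmod n$, yielding precisely the two fixed cells $c_\alpha$ and $c_{\alpha+n}$; their index difference equals $n=d(c_0)$, so they are opposite at $c_0$ by definition.

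The hard part, as already signalled, is the parity argument in the reflective case: it rules out the ``edge-axis'' reflections of the $2n$-cycle of incident cells and thereby guarantees that every reflective automorphism fixes exactly two---rather than zero---incident cells, matching what \ref{fixedcells:refl} requires.
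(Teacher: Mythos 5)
Your proposal is correct and is essentially the argument the paper has in mind: the paper offers no explicit proof, stating only that the lemma follows ``immediately'' from Corollary~\ref{cor:dihedral} and the action of the dihedral group $D_{d(c_0)}$ on a regular $d(c_0)$-gon, and your parity analysis of the cyclic order $(c_1,\dotsc,c_{2n})$ is precisely the detail needed to justify that claim (it identifies the dimension-preserving symmetries of the $2n$ incident cells with $D_n$ acting on a regular $n$-gon, where rotations fix nothing and reflections fix exactly two opposite cells).
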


We will distinguish whether $\Aut(c_0,T)$ contains
reflective automorphisms, rotative automorphisms, or both. Instead of
reflective and rotative automorphisms, we will sometimes shortly
speak of \emph{reflections} and \emph{rotations}.

Corollary~\ref{cor:dihedral} allows us to characterise $\Aut(c_0,T)$ by
the types of automorphisms it contains.

\begin{theorem}\label{thm:aut}
  For every subgroup $H$ of
  $\Aut(c_0,T)$ that contains at least one non-trivial automorphism,
  the following holds.
  \begin{enumerate}
  \item\label{aut:reflection}
    If $H$ contains a reflection but no rotation, then it
    is isomorphic to the 2-element group $\ZZ_2$.
  \item\label{aut:rotation}
    If $H$ contains $k\ge 1$ rotations but no reflection, then it is
    isomorphic to the cyclic group $\ZZ_{k+1}$ where $k+1$ is a
    divisor of $d(c_0)$.
  \item\label{aut:both}
    If $H$ contains both reflections and rotations, then it is
    isomorphic to a dihedral group $D_n$ where $n\ge 2$ is a
    divisor of $d(c_0)$.
  \end{enumerate}
\end{theorem}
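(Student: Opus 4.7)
The plan is to identify the group $H$ via Corollary~\ref{cor:dihedral} with a subgroup of the dihedral group $D_{d(c_0)}$ and then invoke the classification of subgroups of dihedral groups. The first step is to check that under this identification the two notions of type agree: an automorphism in $\Aut(c_0,T)$ is rotative (resp.\ reflective) at $c_0$ precisely when the corresponding element of $D_{d(c_0)}$ preserves (resp.\ reverses) the orientation of the cyclic order of cells incident with $c_0$, i.e.\ is a rotation (resp.\ reflection). After this bookkeeping, the three cases reduce to standard dihedral arguments.

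For~\ref{aut:reflection}, since in any dihedral group the product of two distinct reflections is a non-trivial rotation, $H$ can contain at most one reflection; together with the identity this forces $H \cong \ZZ_2$. For~\ref{aut:rotation}, $H$ is contained in the cyclic subgroup of rotations of $D_{d(c_0)}$, which has order $d(c_0)$; every subgroup of a cyclic group is cyclic, and by Lagrange's theorem the order $k+1$ of $H$ divides $d(c_0)$.

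For~\ref{aut:both}, let $R \leq H$ consist of the identity together with all rotations in $H$; by~\ref{aut:rotation} applied to $R$ it is cyclic of some order $n$ with $n \mid d(c_0)$, and $n \geq 2$ because $H$ contains at least one rotation. Fix any reflection $\rho \in H$. The coset $\rho R$ consists entirely of reflections, and conversely every reflection $\rho' \in H$ satisfies $\rho^{-1}\rho' \in R$ (since the product of two reflections is a rotation), so every reflection in $H$ lies in $\rho R$. Thus $H = R \cup \rho R$ has order $2n$, and the dihedral relation $\rho r \rho^{-1} = r^{-1}$ inherited from $D_{d(c_0)}$, together with a generator of $R$ and $\rho$, exhibits $H$ as $D_n$.

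The only conceptually delicate point is the preliminary verification that the combinatorial notions of reflective/rotative at $c_0$ coincide with the algebraic notions of reflection/rotation inside $D_{d(c_0)}$; once this identification is made, everything else is routine subgroup analysis in a dihedral group, so I do not expect any further obstacle.
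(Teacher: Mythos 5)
Your proof is correct and follows essentially the same route as the paper: both identify $H$ with a subgroup of $D_{d(c_0)}$ via Corollary~\ref{cor:dihedral} and then apply the standard classification of subgroups of dihedral groups. The paper states parts~\ref{aut:rotation} and~\ref{aut:both} as following directly from that corollary, while you spell out the subgroup analysis (cyclic rotation subgroup, index-two reflection coset, inherited dihedral relation); the underlying argument is the same.
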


\begin{proof}
  Claim~\ref{aut:reflection} follows since every reflection has order
  $2$ and there is only one reflection in $H$ since the composition of two
  distinct reflections would yield a rotation.
  Claims~\ref{aut:rotation} and~\ref{aut:both} follow directly from
  Corollary~\ref{cor:dihedral}.
\end{proof}

\section{Reflective symmetries}\label{sec:reflective}

In this section, suppose that $\Aut(c_0,T)$ contains a reflection
$\varphi$.

Our first lemma is a structural result that was first obtained by
Tutte~\cite{Tutte-polyhedra}. We include (a modified version of) its
proof for the sake of completeness.

\begin{lemma}\label{lem:pre-girdle}
  There is a cyclic sequence $(c_0,\dotsc,c_{\ell})$ of pairwise distinct cells such that for
  each cell $c$ in the sequence the following holds.
  \begin{enumerate}
  \item\label{pre-girdle:invariant}
    $c$ is invariant under $\varphi$;
  \item\label{pre-girdle:opposite}
    the predecessor and the successor of $c$ in the sequence are
    incident with $c$ and lie opposite at $c$; and
  \item\label{pre-girdle:notincident}
    no other cell in the sequence is incident with $c$.
  \end{enumerate}
\end{lemma}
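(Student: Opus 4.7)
The plan is to build the sequence greedily starting from $c_0$, at every step passing to the \emph{opposite} invariant cell. Since $\varphi$ is reflective at $c_0$, Lemma~\ref{lem:fixedcells}\ref{fixedcells:refl} yields exactly two cells incident with $c_0$ that are fixed by $\varphi$, and they lie opposite at $c_0$; I would take $c_1$ to be either one of them. Inductively, given invariant cells $c_{i-1}, c_i$ with $c_{i-1}$ incident to $c_i$, I define $c_{i+1}$ to be the cell opposite to $c_{i-1}$ at $c_i$. Invariance of $c_{i+1}$ will come from Lemma~\ref{lem:fixedcells}: $\varphi$ is nontrivial and fixes $c_{i-1}$, which is incident with $c_i$, so by~\ref{fixedcells:rot} it cannot be rotative at $c_i$, hence it is reflective there, and then~\ref{fixedcells:refl} forces the two cells fixed at $c_i$ to be $c_{i-1}$ and its opposite, namely $c_{i+1}$. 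This establishes~\ref{pre-girdle:invariant}, and~\ref{pre-girdle:opposite} is immediate from the construction.

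To obtain the cyclic structure, I would exploit that the map $(c_{i-1}, c_i) \mapsto (c_i, c_{i+1})$ is a bijection on ordered pairs of invariant, mutually incident cells: the ``opposite at'' relation is involutive, so the predecessor is uniquely recovered from the successor. Together with the finiteness of $T$, this forces the orbit of pairs to be purely periodic; taking $\ell+1$ to be the least period yields a cyclic sequence $c_0, c_1, \dotsc, c_\ell$ with $(c_\ell, c_0)$ closing back to $(c_0, c_1)$.

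The remaining property~\ref{pre-girdle:notincident} will then reduce to pairwise distinctness of the cells in the sequence: any $c_j$ incident with $c_i$ is automatically a $\varphi$-invariant cell incident with $c_i$, and the two such cells at $c_i$ are $c_{i-1}$ and $c_{i+1}$ by Lemma~\ref{lem:fixedcells}\ref{fixedcells:refl}, so distinctness pins down the index. For distinctness, I would assume $c_i = c_j$ with $0 \le i < j \le \ell$; then $\{c_{i-1}, c_{i+1}\} = \{c_{j-1}, c_{j+1}\}$. The orientation-preserving identification contradicts the minimality of the period. The orientation-reversing case is more delicate: iterating the defining recursion gives $c_{i+k} = c_{j-k}$ for all $k \ge 0$, and choosing $k$ near the midpoint of $[i,j]$ (adjusted according to the parity of $j-i$) forces either two \emph{consecutive} sequence cells to coincide, which is impossible since incident cells have different dimensions, or a cell to be its own opposite at some common neighbour, which is impossible since entries of the cyclic order of incident cells at a cell are distinct.

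The main obstacle will be exactly this orientation-reversing subcase of the distinctness argument, where the parity bookkeeping must be done carefully; the rest of the proof is a straightforward induction combining Lemma~\ref{lem:fixedcells} with the bijectivity/finiteness observation.
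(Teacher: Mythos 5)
Your proof is correct, and it reaches the same cyclic sequence as the paper but by a more hands-on route. The paper's proof defines the auxiliary graph $F$ on the set of $\varphi$-invariant cells with edges given by incidence, observes via Lemma~\ref{lem:fixedcells} that every vertex of $F$ has degree $0$ or $2$ (so components are cycles or isolated vertices), and notes that $c_0$ has degree $2$ and hence lies on a cycle $C$; the cyclic sequence, its distinctness, and property~\ref{pre-girdle:notincident} then all fall out of the fact that $C$ is a cycle in a graph. You instead run the recursion ``go to the invariant cell opposite the previous one'' explicitly, get pure periodicity from the bijectivity of the shift on pairs plus finiteness, and then have to prove distinctness by hand: your orientation-preserving case (contradicting minimality of the period) and orientation-reversing case (reflecting the index sequence around a midpoint and deriving either two consecutive equal cells, impossible since incident cells have different dimensions, or a cell opposite to itself, impossible since opposite cells are distinct) are both correct and exhaust the possibilities, since $c_{i-1}\ne c_{i+1}$ and Lemma~\ref{lem:fixedcells}\ref{fixedcells:refl} pins down $\{c_{j-1},c_{j+1}\}=\{c_{i-1},c_{i+1}\}$. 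The paper's framing is shorter because ``$0$-$2$-regular graphs decompose into cycles and isolated vertices'' absorbs exactly the bookkeeping you did by hand; your version is more elementary in that it avoids the auxiliary graph and spells out precisely where the distinctness comes from, at the cost of the parity case analysis you yourself flagged as the delicate point.
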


\begin{proof}
  Let $I$ be the set of cells that are invariant under $\varphi$.
  Define an auxiliary graph $F$ with vertex set $I$ by joining two
  elements of $I$ by an edge whenever they are incident. Note that
  $\varphi$, although chosen as an element of $\Aut(c_0,T)$, is also
  an element of $\Aut(c,T)$ for every $c\in I$. Since $\varphi$ is not
  the identity, Lemma~\ref{lem:fixedcells} implies that every vertex
  in $F$ has degree $0$ or $2$ and thus, every component is a cycle or
  an isolated vertex. Since $\varphi$ is reflective at $c_0$ by
  assumption, $c_0$ has degree $2$ in $F$ and is thus contained in a
  cycle $C$ of $F$. The vertices of $C$---arranged in the order they
  appear on $C$---form the desired cyclic sequence: all cells
  are invariant under $\varphi$ by construction;
  Lemma~\ref{lem:fixedcells}\ref{fixedcells:refl} implies that the
  predecessor and the successor of a cell $c$ in the sequence lie
  opposite at $c$ and no other cell in the sequence is incident with
  $c$.
\end{proof}

For every edge in the sequence from Lemma~\ref{lem:pre-girdle}, its
predecessor and its successor are either its two end vertices or its
two incident faces. Every face $f$ in the sequence is preceded and
followed by a vertex and its opposite edge on the boundary of $f$.

The invariant cells from Lemma~\ref{lem:pre-girdle} play a central
role in the constructive decomposition of $T$ in the case of a
reflective symmetry: we will shortly see that these cells are the only
cells invariant under $\varphi$ and thus, they provide a way to define a
unique subgraph of $T$ that will be the basic building block in our
constructive decomposition.

\begin{definition}[Girdle]\label{def:girdle}
  Let $G$ be the planar map obtained by taking the union of all
  vertices and edges that either lie in the sequence from
  Lemma~\ref{lem:pre-girdle} or on the boundary of a face in this
  sequence. We call this subgraph of $T$ the \emph{girdle} with
  respect to $\varphi$. Its cells from the cyclic sequence are called
  \emph{central cells} of $G$, the other ones (which are only part of
  $G$ because they lie on the boundary of a face from the sequence)
  are called \emph{outer cells} of $G$. By construction, every face in
  the sequence is also a face of $G$ (and hence a central cell); the
  other faces of $G$ are called its \emph{sides}. For every face in
  the sequence, precisely one of the edges on its boundary is a
  central cell and so is the other face incident with this edge. The
  union of such two faces and their boundaries is called a
  \emph{diamond}. Note that every girdle has at least two central
  vertices; let $j(G)$ be the smallest index for which $c_{j(G)}$ is a
  vertex.
\end{definition}

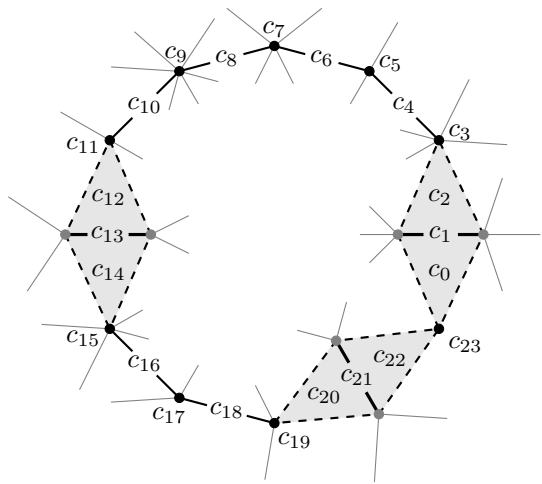
\begin{figure}[htbp]
  \centering
  \begin{tikzpicture}[scale=2.5]
    \coordinate (Ai) at (0:.65);
    \coordinate (Ao) at (0:1.1);
    \coordinate (A) at ($(Ai)!.5!(Ao)$);
    \coordinate (B) at (30:1);
    \coordinate (C) at (60:1);
    \coordinate (D) at (90:1);
    \coordinate (E) at (120:1);
    \coordinate (F) at (150:1);
    \coordinate (Gi) at (180:.65);
    \coordinate (Go) at (180:1.1);
    \coordinate (G) at ($(Gi)!.5!(Go)$);
    \coordinate (H) at (210:1);
    \coordinate (I) at (240:1);
    \coordinate (J) at (270:1);
    \coordinate (Ki) at (300:.65);
    \coordinate (Ko) at (300:1.1);
    \coordinate (K) at ($(Ki)!.5!(Ko)$);
    \coordinate (L) at (330:1);

    \draw[gray] (Ai) -- ++(-.15,.15);
    \draw[gray] (Ai) -- ++(-.2,0);
    \draw[gray] (Ai) -- ++(-.15,-.15);
    \draw[gray] (Ao) -- ++(.1,.3);
    \draw[gray] (Ao) -- ++(.3,0);
    \draw[gray] (Ao) -- ++(.1,-.3);
    \draw[gray,rotate=180] (Gi) -- ++(-.2,.1);
    \draw[gray,rotate=180] (Gi) -- ++(-.2,-.1);
    \draw[gray,rotate=180] (Go) -- ++(.2,.3);
    \draw[gray,rotate=180] (Go) -- ++(.3,-.2);
    \draw[gray,rotate=300] (Ki) -- ++(-.15,.15);
    \draw[gray,rotate=300] (Ki) -- ++(-.15,-.15);
    \draw[gray,rotate=300] (Ko) -- ++(.2,.3);
    \draw[gray,rotate=300] (Ko) -- ++(.3,-.2);
    \draw[gray,rotate=-60] (B) -- ++(.2,.3);
    \draw[gray,rotate=-60] (B) -- ++(-.2,.3);
    \draw[gray,rotate=-60] (B) -- ++(0,-.2);
    \draw[gray,rotate=-60] (B) -- ++(-.15,-.15);
    \draw[gray,rotate=-30] (C) -- ++(0,.3);
    \draw[gray,rotate=-30] (C) -- ++(0,-.2);
    \draw[gray] (D) -- ++(.25,.2);
    \draw[gray] (D) -- ++(-.25,.2);
    \draw[gray] (D) -- ++(.1,-.2);
    \draw[gray] (D) -- ++(-.1,-.2);
    \draw[gray,rotate=30] (E) -- ++(.3,.15);
    \draw[gray,rotate=30] (E) -- ++(-.1,.3);
    \draw[gray,rotate=30] (E) -- ++(-.3,.2);
    \draw[gray,rotate=30] (E) -- ++(.15,-.15);
    \draw[gray,rotate=30] (E) -- ++(0,-.2);
    \draw[gray,rotate=30] (E) -- ++(-.15,-.15);
    \draw[gray,rotate=60] (F) -- ++(0,.3);
    \draw[gray,rotate=60] (F) -- ++(0,-.2);
    \draw[gray,rotate=120] (H) -- ++(.2,.3);
    \draw[gray,rotate=120] (H) -- ++(-.2,.3);
    \draw[gray,rotate=120] (H) -- ++(0,-.2);
    \draw[gray,rotate=120] (H) -- ++(-.15,-.15);
    \draw[gray,rotate=150] (I) -- ++(.3,.2);
    \draw[gray,rotate=150] (I) -- ++(0,-.2);
    \draw[gray,rotate=180] (J) -- ++(.05,.3);
    \draw[gray,rotate=180] (J) -- ++(.1,-.2);

    \filldraw[black!10] (L) -- (Ai) -- (B) -- (Ao) -- cycle;
    \filldraw[black!10] (F) -- (Gi) -- (H) -- (Go) -- cycle;
    \filldraw[black!10] (J) -- (Ki) -- (L) -- (Ko) -- cycle;

    \draw[very thick] (Ai) -- (Ao);
    \draw[thick] (B) -- (C) -- (D) -- (E) -- (F);
    \draw[very thick] (Gi) -- (Go);
    \draw[thick] (H) -- (I) -- (J);
    \draw[very thick] (Ki) -- (Ko);

    \draw[thick,dashed] (L) -- (Ai) -- (B) -- (Ao) -- cycle;
    \draw[thick,dashed] (F) -- (Gi) -- (H) -- (Go) -- cycle;
    \draw[thick,dashed] (J) -- (Ki) -- (L) -- (Ko) -- cycle;

    \filldraw[black!50] (Ai) circle (.7pt);
    \filldraw[black!50] (Ao) circle (.7pt);
    \filldraw (B) circle (.7pt);
    \filldraw (C) circle (.7pt);
    \filldraw (D) circle (.7pt);
    \filldraw (E) circle (.7pt);
    \filldraw (F) circle (.7pt);
    \filldraw[black!50] (Gi) circle (.7pt);
    \filldraw[black!50] (Go) circle (.7pt);
    \filldraw (H) circle (.7pt);
    \filldraw (I) circle (.7pt);
    \filldraw (J) circle (.7pt);
    \filldraw[black!50] (Ki) circle (.7pt);
    \filldraw[black!50] (Ko) circle (.7pt);
    \filldraw (L) circle (.7pt);

    \filldraw[black!10] (A) circle (2.5pt);
    \draw (A) node {$c_1$};
    \draw ($(A)!.4!(B)$) node {$c_2$};
    \draw (B) node[anchor=200] {$c_3$};
    \filldraw[white] ($(B)!.5!(C)$) circle (2.5pt);
    \draw ($(B)!.5!(C)$) node {$c_4$};
    \draw (C) node[anchor=200] {$c_5$};
    \filldraw[white] ($(C)!.5!(D)$) circle (2.5pt);
    \draw ($(C)!.5!(D)$) node {$c_6$};
    \draw (D) node[anchor=270] {$c_7$};
    \filldraw[white] ($(D)!.5!(E)$) circle (2.5pt);
    \draw ($(D)!.5!(E)$) node {$c_8$};
    \draw (E) node[anchor=280] {$c_9$};
    \filldraw[white] ($(E)!.5!(F)$) circle (2.5pt);
    \draw ($(E)!.5!(F)$) node {$c_{10}$};
    \draw (F) node[anchor=10] {$c_{11}$};
    \draw ($(G)!.4!(F)$) node {$c_{12}$};
    \filldraw[black!10] (G) circle (3pt);
    \draw (G) node {$c_{13}$};
    \draw ($(G)!.4!(H)$) node {$c_{14}$};
    \draw (H) node[anchor=20] {$c_{15}$};
    \filldraw[white] ($(H)!.5!(I)$) circle (2.5pt);
    \draw ($(H)!.5!(I)$) node {$c_{16}$};
    \draw (I) node[anchor=60] {$c_{17}$};
    \filldraw[white] ($(I)!.5!(J)$) circle (3pt);
    \draw ($(I)!.5!(J)$) node {$c_{18}$};
    \draw (J) node[anchor=140] {$c_{19}$};
    \draw ($(K)!.4!(J)$) node {$c_{20}$};
    \filldraw[black!10] (K) circle (2.5pt);
    \draw (K) node {$c_{21}$};
    \draw ($(K)!.4!(L)$) node {$c_{22}$};
    \draw (L) node[anchor=150] {$c_{23}$};
    \draw ($(A)!.4!(L)$) node {$c_0$};
  \end{tikzpicture}
  \caption{The sequence of cells from Lemma~\ref{lem:pre-girdle}. The
    vertices in this picture, together with all black and all dashed
    edges, form the girdle of $T$ (see Definition~\ref{def:girdle}).
    The central cells of the girdle are the black vertices, the black
    edges, and the gray faces. The outer cells are the gray vertices
    and the dashed edges. The girdle has three diamonds.}
  \label{fig:pregirdle}
\end{figure}

\begin{lemma}\label{lem:girdle}
  The girdle $G$ has the following properties.
  \begin{enumerate}
  \item\label{girdle:sides}
    $G$ has exactly two sides $f_1,f_2$.
  \item\label{girdle:inside}
    Let $v_1=v_2:=c_{j(G)}$. If $c_{j(G)+1}$ is an edge, we let
    $e_1=e_2:=c_{j(G)+1}$; otherwise $c_{j(G)+1}$ is a face and we let
    $e_i$ be the unique edge on the boundary of $f_i$ that is incident
    with $c_{j(G)+1}$. Then $(T,f_i)$ induces a near-triangulation
    $N_i$ at $v_i$ and $e_i$.
  \item\label{girdle:isom}
    $\varphi$ is an isomorphism between $N_1$ and $N_2$.
  \item\label{girdle:invariant}
    The central cells of $G$ are precisely the cells that are
    invariant under $\varphi$.
  \end{enumerate}
\end{lemma}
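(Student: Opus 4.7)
My plan is to deduce parts (iv) and (i) via a topological analysis of $\varphi$, and then derive (ii) and (iii) as direct consequences. As a reflection at $c_0$, the automorphism $\varphi$ is an orientation-reversing involution of the sphere $S$, hence conjugate to a standard reflection; in particular its fixed-point set $\Gamma$ is a simple closed curve on $S$. I would argue that a cell $c$ of $T$ is invariant under $\varphi$ if and only if it meets $\Gamma$: for one direction, $\varphi$ restricts to a self-homeomorphism of the closure $\overline{c}$ (a closed disc), which has a fixed point by Brouwer; conversely, any fixed point in $c$ forces $\varphi(c)\cap c\ne\emptyset$ and hence $\varphi(c)=c$. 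Reading the cells visited by $\Gamma$ in their cyclic order along $\Gamma$ produces a sequence of invariant cells that, by Lemma~\ref{lem:fixedcells} and the degree-$2$ structure of the auxiliary graph $F$ introduced in the proof of Lemma~\ref{lem:pre-girdle}, coincides with the cycle $C$ of $F$ containing $c_0$. This establishes (iv).

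For (i), $\Gamma$ separates $S$ into two open discs $D_1,D_2$. Every point of $\Gamma$ lies in some invariant cell, hence by (iv) in a central cell of $G$; so $\Gamma$ is contained in the union of the central cells and their boundaries. Each side of $G$ is therefore disjoint from $\Gamma$ and lies entirely in one of $D_1$ or $D_2$. A short connectedness argument---using that in each $\overline{D_j}$ the central cells, together with the outer-cell half of every central face on the $D_j$-side, form a connected planar subgraph enclosing $D_j$---shows that each $D_j$ contains exactly one side. Thus $G$ has exactly two sides $f_1,f_2$, and $\varphi$ swaps them since it swaps $D_1$ with $D_2$.

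Parts (ii) and (iii) then follow quickly. For (ii), I would verify that $G$ is $2$-connected (the cyclic structure of central cells together with the diamond attachments leaves no cut vertex), so each side is bounded by a cycle and the construction preceding the lemma yields a well-defined near-triangulation $N_i$ induced by $(T,f_i)$ at $v_i,e_i$. For (iii), the root vertex $v_1=v_2=c_{j(G)}$ is invariant and $\varphi(e_1)=e_2$ by construction: either $c_{j(G)+1}$ is an edge, in which case $e_1=e_2$ is itself invariant, or $c_{j(G)+1}$ is a face whose two boundary edges meeting $c_{j(G)}$ are precisely $e_1$ and $e_2$, and these are swapped by $\varphi$. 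Combined with $\varphi(f_1)=f_2$, this makes $\varphi$ restrict to the claimed isomorphism $N_1\to N_2$. The principal obstacle is the topological step for (iv); if one prefers to avoid invoking the classification of orientation-reversing involutions of the sphere, one can argue combinatorially instead, showing by contradiction that a second component of $F$ would yield a second $\varphi$-invariant Jordan curve on $S$ disjoint from the first, forcing $\varphi$ to act as the identity on one of the resulting regions and contradicting $\varphi\ne\id$.
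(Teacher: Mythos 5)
Your approach is genuinely different from the paper's. The paper proves part~\ref{girdle:sides} first by a purely combinatorial argument: it shows that no two diamonds can share an outer cell (because $\varphi$ would then map the two distinct central edges of those diamonds to edges with identical endpoints, producing a double edge), concludes that $G$ together with its diamond faces retracts to a Jordan curve, and then derives~\ref{girdle:inside}, \ref{girdle:isom}, \ref{girdle:invariant} in that order. You instead start from the topological fixed-point set $\Gamma$ of an involution realization of $\varphi$, derive~\ref{girdle:invariant} from $\Gamma$ being a simple closed curve, and then read off~\ref{girdle:sides}. This route can in principle work, and parts~\ref{girdle:inside} and~\ref{girdle:isom} are handled essentially as in the paper.

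However, your argument has a genuine gap precisely where the paper's key combinatorial step sits. Your proof of~\ref{girdle:sides} is the sentence ``a short connectedness argument \ldots\ shows that each $D_j$ contains exactly one side,'' but this is exactly what can fail without the unique-diamond fact. If two diamonds were allowed to share an outer vertex $a \in D_1$, the four outer edges incident with $a$ would form two ``V''-shaped chords of $\overline{D_1}$ meeting at $a$, cutting $D_1$ into four regions, only two of which are halves of central faces; the remaining two would be distinct sides of $G$ inside $D_1$, giving $G$ more than two sides. Ruling this out requires the observation that two diamonds sharing the outer vertex $a$ must (by applying $\varphi$) share their second outer vertex $\varphi(a)$ as well, whence their two central edges would be parallel edges, contradicting simplicity of $T$. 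That argument is not implied by the mere existence of the Jordan curve $\Gamma$ and must be supplied.

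A secondary concern is the invocation of the classification of orientation-reversing involutions of $S^2$ (Brouwer--Kerékjártó--Eilenberg). To apply it you must realize $\varphi$ not merely as \emph{some} homeomorphism inducing the automorphism, but as a homeomorphism which is itself an involution (e.g.\ a simplicial map on a suitable subdivision). That is doable but is a nontrivial step the proposal leaves implicit, and the theorem itself is far heavier machinery than the elementary Jordan-curve and no-double-edge argument the paper uses. Your suggested combinatorial fallback also does not work as stated: two disjoint $\varphi$-invariant Jordan curves produce two discs and an annulus, and there is no reason for $\varphi$ to act as the identity on any of the three regions; in fact $\varphi$ would act as an orientation-reversing homeomorphism on each of them. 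So neither the main route nor the fallback closes the gap at~\ref{girdle:sides} as written.
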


\begin{proof}
  By Lemma~\ref{lem:pre-girdle}, two central cells of $G$ are
  incident if and only if they are consecutive in the cyclic sequence.
  We claim that every outer cell is contained in a unique diamond,
  which implies that the subspace of the sphere consisting of $G$ and
  the faces in its diamonds is contractible to a Jordan curve, which
  in turn implies~\ref{girdle:sides} by the Jordan curve theorem.
  Indeed, every outer cell of $G$ is a vertex or an edge that is
  contained in a diamond. If two diamonds share an outer edge, they
  also share an outer vertex $v$. Now $\varphi$ maps $v$ to the other
  outer vertex of each of the two diamonds, hence they also share
  their second outer vertex. But then the central edges contained in
  the two diamonds are distinct and have the same end vertices,
  contradicting the fact that $T$ has no double edges.

  We have thus proved~\ref{girdle:sides}. Since each side is bounded
  by a cycle,~\ref{girdle:inside} follows immediately. Let $c$ be a
  cell incident with $c_0$ that is not a central cell of $G$, then
  $c$ is contained in one of the sides of $G$ or lies on the boundary
  of precisely one side. The reflection $\varphi$ maps $c$ to a cell
  that is contained in (or lies on the boundary of) the other side of
  $G$, which yields~\ref{girdle:isom}. Finally,~\ref{girdle:invariant}
  follows directly from~\ref{girdle:isom}.
\end{proof}

\begin{figure}[htbp]
  \centering
  \begin{tikzpicture}[scale=.75]
    \clip (-4.1,-4) rectangle (3.7,3.5);
    \node (myfirstpic) at (0,0) {\includegraphics[height=6cm]{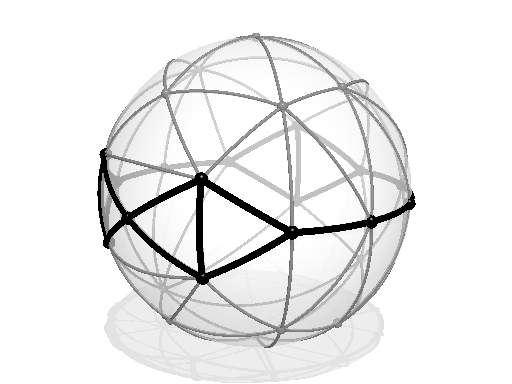}};
    \draw (-1.75,-.7) node {\large $c_0$};
    \draw (1.1,-1.2) node {\large $c_3$};
    \draw (1.6,-.5) node {\large $c_4$};
    \draw (-3,3) node {\LARGE $T$};
    \draw (-3.75,0) node {\LARGE $G$};
    \draw (3,2.2) node {\LARGE $N_1$};
    \draw (3.2,-2) node {\LARGE $N_2$};
  \end{tikzpicture}
  \caption{The girdle $G$ of a triangulation $T$ with $j(G)=3$. The
    near-triangulations $N_1$ and $N_2$ that $T$ induces on the sides
    of $G$ at $c_3$ and $c_4$ are isomorphic.}
  \label{fig:girdle}
\end{figure}

Note that Lemma~\ref{lem:girdle}\ref{girdle:invariant} implies that
no automorphism is reflective at one vertex and rotative at another
vertex:

\begin{corollary}\label{cor:reflective}
  If $\varphi\in\Aut(c_0,T)$ is reflective at $c_0$, then it is
  reflective at every cell $c$ that is invariant under $\varphi$.
\end{corollary}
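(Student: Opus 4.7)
The plan is to invoke the description of invariant cells given by Lemma~\ref{lem:girdle}\ref{girdle:invariant} together with the opposition property guaranteed by Lemma~\ref{lem:pre-girdle}, and then use the dichotomy from Lemma~\ref{lem:fixedcells} to conclude. Since the corollary follows almost directly from these already-established facts, I expect the proof to be short; the only subtlety is confirming that the two cells I produce incident with $c$ are genuinely distinct and genuinely fixed.

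First, I would let $c\neq c_0$ be any cell invariant under $\varphi$ (the case $c=c_0$ is by hypothesis). By Lemma~\ref{lem:girdle}\ref{girdle:invariant}, $c$ is a central cell of the girdle $G$, so $c$ occurs in the cyclic sequence $(c_0,\dotsc,c_{\ell})$ provided by Lemma~\ref{lem:pre-girdle}. Let $c^-$ and $c^+$ be its predecessor and successor in that sequence. By Lemma~\ref{lem:pre-girdle}\ref{pre-girdle:invariant} both $c^-$ and $c^+$ are invariant under $\varphi$, and by Lemma~\ref{lem:pre-girdle}\ref{pre-girdle:opposite} they are incident with $c$ and lie opposite at $c$. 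In particular they are distinct (opposite cells in the cyclic order around $c$ cannot coincide, since $d(c)\ge 2$).

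Now $\varphi$ is an element of $\Aut(c,T)$, and it is not the identity since it is a non-trivial reflection at $c_0$. The dichotomy of Lemma~\ref{lem:fixedcells} gives exactly two options: if $\varphi$ were rotative at $c$, then by~\ref{fixedcells:rot} it would fix no cell incident with $c$, contradicting the fact that both $c^-$ and $c^+$ are invariant and incident with $c$. Hence $\varphi$ must be reflective at $c$, as required.

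The only place to be careful is the identification of the predecessor and successor as cells \emph{incident} with $c$ and distinct from one another; both are guaranteed by Lemma~\ref{lem:pre-girdle}, so no further argument is needed. Thus the entire proof amounts to reading off the structural information already encoded in the cyclic sequence around the girdle and applying the reflective/rotative dichotomy once.
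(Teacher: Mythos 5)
Your proof is correct and fills in the details of exactly the argument the paper intends: the paper presents the corollary as an immediate consequence of Lemma~\ref{lem:girdle}\ref{girdle:invariant}, which (via the definition of central cells) places any invariant cell $c$ into the cyclic sequence of Lemma~\ref{lem:pre-girdle}, whence two distinct invariant cells incident with $c$ exist and Lemma~\ref{lem:fixedcells} rules out $\varphi$ being rotative at $c$. This matches the paper's reasoning; the paper simply leaves the unpacking as a remark rather than a written proof.
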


By Lemma~\ref{lem:girdle}, we have a constructive decomposition of
$T$ into its girdle $G$ and two isomorphic near-triangulations
$N_1,N_2$. What other properties do $G$, $N_1$, and $N_2$ have to
satisfy? Clearly, each side of $G$ is bounded by a cycle whose length
matches the number of outer vertices of $N_1$ and $N_2$. We call this
number the \emph{length} of the girdle. The following lemma gives a
complete characterisation of the near-triangulations that can occur.

\begin{lemma}\label{lem:girdle:sides}
  Let $G$ be a graph that occurs as the girdle of some triangulation
  and let $N$ be a near-triangulation. There exists a triangulation
  $T$ with a reflective automorphism $\varphi$, $G$ as its girdle
  with respect to $\varphi$, and $N$
  as the near-triangulation from
  Lemma~\ref{lem:girdle} if and only if
  \begin{enumerate}
  \item\label{girdle:sides:length}
    the number of outer vertices of $N$ is the same as the length of
    $G$ and
  \item\label{girdle:sides:chords}
    every chord of $N$ has at least one end vertex that is an outer
    vertex of $G$.
  \end{enumerate}
\end{lemma}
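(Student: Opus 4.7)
The plan is to prove the two implications separately. For necessity, I would deduce both (i) and (ii) from the hypothesis that $T$ is a simple triangulation with a reflection $\varphi$ whose girdle is $G$ and whose induced near-triangulation on a side of $G$ is $N$. For sufficiency, I would construct $T$ explicitly by inserting $N$ into one side of $G$ and an orientation-reversed copy of $N$ into the other side, with the identifications chosen so that $\varphi$ extends to an automorphism of $T$, and then verify the resulting map is a simple triangulation with the required properties.

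For necessity, (i) is immediate from Lemma~\ref{lem:girdle}\ref{girdle:inside}: the outer boundary of $N$ is identified with the boundary cycle of the side $f_1$, whose length equals the length of $G$. For (ii), I would argue by contradiction. Suppose $N$ has a chord $e = uv$ whose endpoints are both central vertices of $G$. By Lemma~\ref{lem:girdle}\ref{girdle:invariant}, $u$ and $v$ are fixed by $\varphi$, so $\varphi(e)$ has exactly the same two endpoints as $e$. However, $e$ lies strictly inside $f_1$ while Lemma~\ref{lem:girdle}\ref{girdle:isom} forces $\varphi(e)$ to lie strictly inside $f_2$, so $\varphi(e) \neq e$. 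This produces two distinct edges of $T$ with the same pair of endpoints, contradicting the simplicity of $T$.

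For sufficiency, I insert $N$ into $f_1$ at the vertex $v_1$ and edge $e_1$ of Lemma~\ref{lem:girdle}\ref{girdle:inside}, and insert an orientation-reversed copy of $N$ into $f_2$ at $v_2,e_2$, choosing the identification on $\partial f_2$ to be the one on $\partial f_1$ transported across by $\varphi$. Then $\varphi$ extends to an involution $\varphi'$ of the resulting planar map $T$ that swaps the two inserted copies of $N$ and restricts to $\varphi$ on $G$; this is an automorphism of $T$ which reverses orientation at $c_0$ and hence is a reflection. All faces of $T$ are triangles, since the inner faces of each copy of $N$ and the central faces of $G$ are all triangles. The invariant cells of $\varphi'$ are exactly the central cells of $G$, because inner cells of each copy of $N$ are swapped with cells of the mirror copy and outer cells of $G$ are swapped within each diamond; by Lemma~\ref{lem:pre-girdle} and Definition~\ref{def:girdle} this makes $G$ the girdle of $T$ with respect to $\varphi'$ and $N$ the near-triangulation induced on $f_1$.

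The main obstacle is to verify that $T$ is simple. A pair of parallel edges in $T$ would consist of an inner edge $e$ of one copy of $N$ together with a second edge $e'$ of $T$ sharing both its endpoints. Since the inner vertices of each copy of $N$ are new vertices absent from $G$ and from the other copy, both endpoints of $e$ must be outer vertices of $N$, so $e$ is a chord. A case analysis on whether each endpoint is a central or an outer vertex of $G$, combined with the facts that the two outer vertices of any diamond lie on different sides of $G$, that $N$ itself is simple, and that edges of $G$ only join consecutive central vertices of the sequence or the two outer vertices of a common diamond, shows that whenever at least one endpoint of $e$ is an outer vertex of $G$ no such $e'$ exists: the image $\varphi(e)$ differs from $e$ in at least one endpoint, and no edge of $G$ can match both endpoints of $e$. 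The only remaining possibility is that both endpoints of $e$ are central vertices of $G$, which is exactly the configuration forbidden by (ii). Hence $T$ is simple, completing the construction.
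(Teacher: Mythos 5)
Your proof is correct and follows essentially the same route as the paper's: necessity of (i) is read off Lemma~\ref{lem:girdle}\ref{girdle:inside}, necessity of (ii) comes from the same contradiction via Lemma~\ref{lem:girdle}\ref{girdle:isom} and \ref{girdle:invariant} producing a double edge, and sufficiency proceeds by inserting $N$ into both sides of $G$ at the canonical roots. The main difference is one of detail: the paper's sufficiency argument is a single sentence (``The result of this operation does not have any double edges by~(ii); since all its faces are triangular, it is the desired triangulation $T$''), whereas you spell out the extension of $\varphi$ to an automorphism $\varphi'$ of $T$, the identification of the central cells of $G$ with the $\varphi'$-invariant cells, and a case analysis for simplicity. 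One small remark on wording: the insertion operation as defined in Section~\ref{sec:preliminaries} is combinatorially determined once $(f,v,e)$ are given, and since $\varphi$ maps $(f_1,v_1,e_1)$ to $(f_2,v_2,e_2)$ it automatically satisfies $\varphi\circ\sigma_1 = \sigma_2$ on the boundary circles; so ``inserting $N$'' into $f_2$ at $(v_2,e_2)$ already produces the mirror-image placement you describe, and your explicit mention of an ``orientation-reversed copy'' is consistent with but not strictly necessary beyond the paper's convention. Your filled-in case analysis for simplicity is sound: parallel edges within a single copy of $N$ or within $G$ are excluded by simplicity of $N$ and $G$; a chord of one copy sharing both endpoints with an edge of $G$ is impossible since that would already force a double edge in $N$; and the cross-side case requires both endpoints to be central, which is exactly what~(ii) rules out.
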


\begin{proof}
  First assume that the triangulation $T$ exists.
  Property~\ref{girdle:sides:length} is immediate; in order to
  prove~\ref{girdle:sides:chords}, let $e=uv$ be a chord of $N$. If
  $u$ and $v$ are central vertices of $G$, then
  Lemma~\ref{lem:girdle}\ref{girdle:isom} would imply that $\varphi$
  maps $e$ to an edge $e'$ with the same end vertices. Since $e$ is
  not contained in $G$, Lemma~\ref{lem:girdle}\ref{girdle:invariant}
  shows that $e'\not= e$, contradicting the fact that there are no
  double edges.

  Now assume that $N$ and $G$ satisfy~\ref{girdle:sides:length}
  and~\ref{girdle:sides:chords}. Let $f_1,f_2$ be the sides of $G$ and
  let vertices $v_1,v_2$ and edges $e_1,e_2$ on the boundaries of
  $f_1$ and $f_2$, respectively, be defined as in
  Lemma~\ref{lem:girdle}. By~\ref{girdle:sides:length} we can insert
  $N$ into each side $f_i$ of $G$ at $v_i$ and $e_i$. The result
  of this operation does not have any double edges
  by~\ref{girdle:sides:chords}; since all its faces are triangular, it
  is the desired triangulation $T$.
\end{proof}

More details about the construction of graphs that can serve as
girdles and about the construction of triangulations with reflective
symmetry from their girdle and the near-triangulations characterised
by Lemma~\ref{lem:girdle:sides} will be given in Section~\ref{sec:constr:reflective}.

\section{Rotative symmetries}\label{sec:rotative}

In this section, suppose that $\Aut(c_0,T)$ contains a rotative
automorphism $\varphi$. Then the subgroup $H$ of $\Aut(c_0,T)$
generated by $\varphi$ contains no reflections and hence is isomorphic
to a cyclic group by Theorem~\ref{thm:aut}. We fix the group $H$ for
the rest of this section; let $m$ be its order.
For every cell $c$ incident with $c_0$, the cells $c,\varphi(c),
\dotsc,\varphi^{m-1}(c)$ are distinct since $\varphi,\dotsc,
\varphi^{m-1}$ are rotations and thus have no invariant cells incident
with $c_0$. Without loss of generality, we can choose $\varphi$ in
such a way that $c,\varphi(c),\dotsc,\varphi^{m-1}(c)$ are arranged
around $c_0$ in that order (in clockwise direction, say) for every
cell $c$ incident with $c_0$ (see Figure~\ref{fig:rotation}).

\begin{figure}[htbp]
  \centering
  \begin{tikzpicture}
    \coordinate (A) at (80:2);
    \coordinate (B) at (40:2.2);
    \coordinate (C) at (0:2.2);
    \coordinate (D) at (310:1.8);
    \coordinate (E) at (270:2);
    \coordinate (F) at (220:1.9);
    \coordinate (G) at (170:2);
    \coordinate (H) at (120:2.1);

    \draw (A) -- (B) -- (C) -- (D) -- (E) -- (F) -- (G) -- (H) -- cycle;
    \draw (A) -- (0,0) -- (B);
    \draw (C) -- (0,0) -- (D);
    \draw (E) -- (0,0) -- (F);
    \draw (G) -- (0,0) -- (H);
    \draw (A) -- +(30:.75);
    \draw (A) -- +(120:.75);
    \draw (B) -- +(340:.75);
    \draw (B) -- +(45:.75);
    \draw (B) -- +(100:.75);
    \draw (C) -- +(310:.75);
    \draw (C) -- +(40:.75);
    \draw (D) -- +(250:.75);
    \draw (D) -- +(315:.75);
    \draw (D) -- +(10:.75);
    \draw (E) -- +(220:.75);
    \draw (E) -- +(310:.75);
    \draw (F) -- +(160:.75);
    \draw (F) -- +(225:.75);
    \draw (F) -- +(280:.75);
    \draw (G) -- +(120:.75);
    \draw (G) -- +(210:.75);
    \draw (H) -- +(60:.75);
    \draw (H) -- +(125:.75);
    \draw (H) -- +(180:.75);

    \draw[very thick,white] (0,0) -- ($(0,0)!.25!(C)$);
    \filldraw (0,0) node[anchor=170] {\Large $c_0$} circle (2pt);
    \filldraw (A) circle (1.5pt);
    \filldraw (B) circle (1.5pt);
    \filldraw (C) circle (1.5pt);
    \filldraw (D) circle (1.5pt);
    \filldraw (E) circle (1.5pt);
    \filldraw (F) circle (1.5pt);
    \filldraw (G) circle (1.5pt);
    \filldraw (H) circle (1.5pt);

    \draw ($(B)!.5!(0,0)$) node[circle,fill=black!0] {$c$};
    \draw ($(D)!.5!(0,0)$) node[rectangle,fill=black!0] {$\varphi(c)$};
    \draw ($(F)!.5!(0,0)$) node[rectangle,fill=black!0] {$\varphi^2(c)$};
    \draw ($(H)!.5!(0,0)$) node[rectangle,fill=black!0] {$\varphi^3(c)$};
  \end{tikzpicture}
  \caption{The images of a cell $c$ incident with $c_0$ under a
    rotation $\varphi$ of order $4$.}
  \label{fig:rotation}
\end{figure}
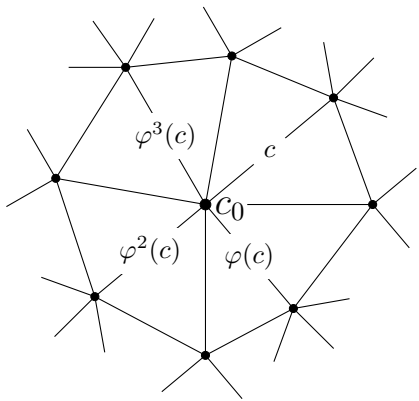

Lemma~\ref{lem:invariant} tells us that $c_0$ is not the only
invariant cell, so let $c_1$ be such a cell of shortest distance from
$c_0$ and consider a shortest path $P$ in $T$ from $c_0$ (or a vertex
incident with it---if $c_0$ is an edge or a face) to $c_1$ (or a
vertex incident with it).

\begin{lemma}\label{lem:pre-spindle}
  The paths $P,\varphi(P),\dotsc,\varphi^{m-1}(P)$ do not share any
  internal vertices. If $c_0$ is an edge or a face, all paths have
  distinct first vertices. The same is true for $c_1$ and the last
  vertices of the paths.
\end{lemma}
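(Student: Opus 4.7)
Proof plan. The plan is to prove each claim by contradiction. The claims about first and last vertices are settled quickly using Lemma~\ref{lem:fixedcells}, while the internal-vertex claim requires a shortest-path exchange followed by a topological argument in the spirit of Lemma~\ref{lem:invariant}. Write $P=(v_0,\dots,v_k)$.

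For the first and last vertices: if $c_0$ is an edge or a face and $\varphi^i(v_0)=\varphi^j(v_0)$ with $i<j$, then the non-trivial element $\varphi^{j-i}\in H$ fixes the cell $v_0$ incident with $c_0$. By Theorem~\ref{thm:aut}\ref{aut:rotation}, $H$ contains only rotations, so $\varphi^{j-i}$ is rotative at $c_0$, and Lemma~\ref{lem:fixedcells}\ref{fixedcells:rot} forbids it to fix any cell incident with $c_0$, a contradiction. The same argument applies at $c_1$, provided non-trivial powers of $\varphi$ are rotative at $c_1$ as well; this follows from the observation that elements of $H$ are orientation-preserving homeomorphisms of the sphere (as they are rotative at $c_0$) and hence rotative at every invariant cell---a topological replacement for Corollary~\ref{cor:reflective}, which is not yet available.

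For the main claim, suppose $v=v_a=\varphi^j(v_b)$ is internal to $P$ ($0<a<k$). Concatenating $P$ up to $v$ with $\varphi^j(P)$ after $v$ yields a walk from $v_0$ to $\varphi^j(v_k)$ of length $a+k-b$ connecting a vertex incident with $c_0$ to a vertex incident with $c_1$; by the minimality of $k$, we get $a+k-b\ge k$, and symmetrically $b+k-a\ge k$. Thus $a=b$ and $\varphi^j(v_a)=v_a$.

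The main obstacle is to convert the fact that $v_a$ is fixed by the non-trivial automorphism $\varphi^j$ (but possibly not by $\varphi$ itself) into a contradiction with the choice of $c_1$. To do this, I would consider the orbit of $v_a$ under $H$: since $\varphi^j$ lies in its stabiliser, this orbit has some size $d<m$. Using that the $\varphi^t$ are rotative at $v_a$ and therefore fix no cell incident with $v_a$, the $d$ orbit elements can be linked by rotated copies of a short local segment into an invariant cycle $C$, contained in a small neighbourhood of the orbit (hence at cell-distance roughly $2a$ from $c_0$) and dividing the sphere into two discs. The Brouwer argument from the proof of Lemma~\ref{lem:invariant} applied to the disc not containing $c_0$ then yields a $\varphi$-invariant cell strictly closer to $c_0$ than $c_1$, contradicting the choice of $c_1$. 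The hardest part will be constructing the cycle $C$ concretely and ensuring that the Brouwer-produced invariant cell strictly improves on $c_1$ in cell-distance, not merely in the graph-distance measured along $P$.
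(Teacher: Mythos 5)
Your handling of the first/last vertices (via Lemma~\ref{lem:fixedcells} and orientation-preservation) and your shortest-path exchange (concatenating an initial segment of $P$ with a terminal segment of $\varphi^j(P)$ to force $a=b$, hence $\varphi^j(v_a)=v_a$) are both correct and line up with the first half of the paper's argument.

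The final step is a genuine gap, and you have located it yourself: you only get that \emph{some} power $\varphi^j$ with $0<j<m$ fixes $v_a$, which does not imply $\varphi(v_a)=v_a$ when $\gcd(j,m)>1$. Your plan to remedy this---take the $H$-orbit of $v_a$, stitch a $\varphi$-invariant cycle through it, and run the Brouwer argument to produce a fixed cell closer to $c_0$---is not worked out and does not obviously close. Concretely: it is unclear how to build the invariant cycle from a scattered orbit, and even granting it, the Brouwer fixed cell lies in the disc not containing $c_0$, which gives no control over its cell-distance from $c_0$; it could well be $c_1$ itself or something farther away, in which case there is no contradiction.

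The paper avoids the stabiliser issue entirely by making an \emph{extremal} choice before invoking topology: among all pairs $\varphi^i(P),\varphi^j(P)$ sharing an internal vertex, it picks the pair whose first shared vertex $v$ is closest to the initial vertices. The two initial segments to $v$, together with $c_0$, form a closed curve separating the sphere; $c_1$ lies in one disc, and any path $\varphi^k(P)$ starting in the other disc must cross the curve and hence (since paths can only share interior vertices) meet $\varphi^i(P)$ or $\varphi^j(P)$. By the extremal choice this meeting point can only be $v$. It follows that two \emph{consecutive} paths $\varphi^k(P),\varphi^{k+1}(P)$ pass through $v$, both at the same distance $a$ from their first vertices, so $\varphi$ maps $v$ to $v$ directly---contradicting the minimality of $c_1$. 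This is the idea missing from your proposal: use minimality over pairs and a separating-curve argument to upgrade ``some power of $\varphi$ fixes $v$'' to ``$\varphi$ itself fixes $v$.''
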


The special case of Lemma~\ref{lem:pre-spindle} in which $m$ is prime
has been proved by Tutte~\cite{Tutte-polyhedra}.

\begin{proof}
  First note that the paths $P,\varphi(P),\dotsc,\varphi^{m-1}(P)$ are
  distinct, because $\varphi^i(P)=\varphi^j(P)$ for $i\not= j$ would imply that
  $\varphi^i(c)=\varphi^j(c)$ for some cell $c$ incident with $c_0$,
  which we already saw to be impossible. The same argument shows that
  two paths can only share an end vertex if it is $c_0$ or $c_1$.

  Suppose two paths $\varphi^i(P)$, $\varphi^j(P)$ with $i\not= j$
  share an internal vertex. Its distance from the first vertex has to
  be the same in both paths, since otherwise the union of the two
  paths would contain a path from $c_0$ (or a vertex incident with it)
  to $c_1$ (or a vertex incident with it) shorter than $P$, a
  contradiction to the choice of $P$. Choose $i,j$ such that the
  distance of their first intersection $v$ from their first vertices
  is as small as possible. The union of the segments of the two paths
  from the first vertices to $v$ together with $c_0$ separates the
  sphere into two discs, one of which contains $c_1$ and all its
  incident cells. Any path $\varphi^k(P)$ starting in the other disc
  thus has to meet $\varphi^i(P)$ or $\varphi^j(P)$ at the latest in
  $v$. The minimal choice of $i,j$ implies that every such path goes
  through $v$.

  Therefore, there is a $k$ such that $\varphi^k(P)$ and
  $\varphi^{k+1}(P)$ meet in $v$. This means that $v$ is invariant
  under $\varphi$, contradicting the choice of $c_1$ as an invariant
  cell of minimal distance from $c_0$. This proves the lemma.
\end{proof}

Lemma~\ref{lem:pre-spindle} implies that, just like the girdle divides
the triangulation into two parts in the case of a reflective
automorphism, the paths $P,\varphi(P),\dotsc,\varphi^{m-1}(P)$
together with $c_0$ and $c_1$ divide the triangulation into $m$ parts.
The union of these paths and cells might thus serve as a building
block in our constructive decomposition.

\begin{definition}\label{def:spindle}
  Let $S$ be the union of $c_0$, $c_1$, their boundaries, and paths
  $P,\varphi(P),\dotsc,\varphi^{m-1}(P)$ satisfying the statement of
  Lemma~\ref{lem:pre-spindle} (see Figure~\ref{fig:spindle}). We call
  $S$ a \emph{spindle} of $T$ with respect to the group
  $H\subseteq\Aut(c_0,T)$. The cells $c_0$ and $c_1$ are called the \emph{north
  pole} and the \emph{south pole} of the spindle, respectively. A face
  of a spindle which is neither $c_0$ nor $c_1$ is called a
  \emph{segment} of the spindle.
  \begin{figure}[htbp]
    \centering
    \begin{tikzpicture}[scale=.75]
      \clip (-4.5,-3.9) rectangle (5.3,4.4);
      \node (myfirstpic) at (0,0) {\includegraphics[height=6cm]{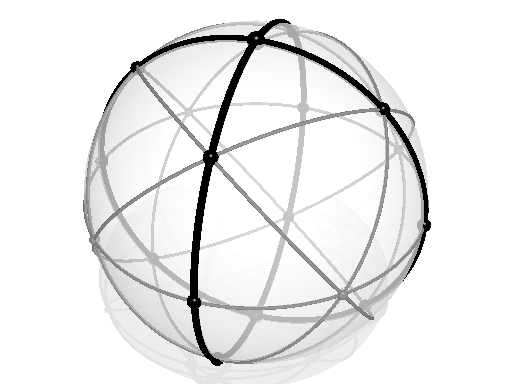}};
      \draw (-.2,3.5) node {\large $c_0$};
      \draw (-1.6,-.75) node {\Large $P$};
      \draw (-3.8,2.1) node {\Large $\varphi(P)$};
      \draw (1.5,4) node {\Large $\varphi^2(P)$};
      \draw (4.5,.2) node {\Large $\varphi^3(P)$};
      \draw (.2,-3) node {\large $c_1$};
    \end{tikzpicture}
    \caption{A triangulation and a spindle (bold) with respect to
      the group $H = \{\id,\varphi,\varphi^2,\varphi^3\}$ of
      automorphisms, in which the invariant cells $c_0,c_1$ are both
      vertices.}
    \label{fig:spindle}
  \end{figure}
\end{definition}

Similarly to reflections, we immediately get the following result:

\begin{lemma}\label{lem:spindle}
  A spindle $S$ has the following properties:
  \begin{enumerate}
  \item $S$ has exactly $m$ segments $f_1,\dotsc,f_m$ with each $f_i$
    being bounded by a cycle containing $\varphi^{i-1}(P)$ and
    $\varphi^i(P)$;
  \item the intersection of $T$ with $f_i$ and its boundary is a
    near-triangulation $N_i$; and
  \item for every $i$, $\varphi$ is an isomorphism from $N_i$ to
    $N_{i+1}$.
  \end{enumerate}
\end{lemma}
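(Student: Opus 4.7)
The plan is to first analyse the topology of the spindle $S$ inside the sphere, then derive the combinatorial and automorphism properties of the segments.

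First I would establish (i). By the convention fixed at the start of Section~\ref{sec:rotative}, for every cell $c$ incident with $c_0$ the cells $c,\varphi(c),\dotsc,\varphi^{m-1}(c)$ appear in clockwise order around $c_0$. Applying this to the first cell of $P$ distinct from $c_0$, the paths $P,\varphi(P),\dotsc,\varphi^{m-1}(P)$ leave $c_0$ in this cyclic order. Lemma~\ref{lem:pre-spindle} guarantees that these paths are pairwise internally disjoint and have pairwise distinct first vertices (if $c_0$ is not a vertex) and last vertices (if $c_1$ is not a vertex). Topologically, removing the open interiors of $c_0$ and $c_1$ from the sphere (a point, open arc, or open disc depending on the dimension of the pole) yields an open annulus, and the $m$ arcs obtained from the paths are pairwise disjoint simple arcs joining the two boundary components of this annulus. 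They therefore decompose the annulus into exactly $m$ open discs. Re-attaching the boundaries of $c_0$ and $c_1$, each of these discs is a face $f_i$ of $S$ whose boundary is a cycle consisting of $\varphi^{i-1}(P)$, $\varphi^i(P)$, and (possibly trivial) arcs on the boundaries of $c_0$ and $c_1$.

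Next, for (ii), I would observe that $S$ is a subgraph of $T$, so each face $f_i$ of $S$ is a union of faces of $T$ together with the edges and vertices of $T$ that lie strictly inside $f_i$. Let $C_i$ be the cycle bounding $f_i$, which has length at least $3$. The planar map formed by $C_i$ together with all cells of $T$ lying in (or inside) $f_i$ has $f_i$ itself as its outer face and all other faces triangular, since every face of $T$ is a triangle. Choosing a boundary vertex and incident boundary edge as a strong rooting as in the construction preceding Figure~\ref{fig:inserting} equips this map with the structure of a near-triangulation $N_i$ in the sense of Section~\ref{sec:preliminaries}.

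For (iii), I would use that $\varphi\in\Aut(T)$ fixes both $c_0$ and $c_1$, and by definition maps $\varphi^{i-1}(P)$ to $\varphi^i(P)$; thus it sends the cycle $C_i$ to $C_{i+1}$ (indices mod $m$). Interpreting $\varphi$ as a homeomorphism of the sphere, it maps the closed disc bounded by $C_i$ onto the closed disc bounded by $C_{i+1}$, and because $\varphi$ sends cells of $T$ to cells of $T$ of the same dimension preserving incidences, its restriction is an isomorphism $N_i\to N_{i+1}$ of planar maps (choosing the rooting of $N_{i+1}$ to be the $\varphi$-image of that of $N_i$).

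The main obstacle is part (i): making rigorous the claim that the $m$ paths together with the two poles cut the sphere into exactly $m$ disc-bounded regions that match the cyclic orientation induced by $\varphi$. The cases in which $c_0$ or $c_1$ is an edge or a face, rather than a vertex, make the bounding cycles $C_i$ slightly more intricate to describe, but the internal-disjointness and distinct-endpoint statements of Lemma~\ref{lem:pre-spindle} together with the chosen cyclic orientation handle all cases uniformly, so this is a matter of careful bookkeeping rather than a substantive new argument.
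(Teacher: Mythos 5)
Your proof is correct and follows the natural topological argument that the paper implicitly relies on when it states the lemma as immediate (``Similarly to reflections, we immediately get the following result''): the internal disjointness and distinct-endpoint conditions of Lemma~\ref{lem:pre-spindle} make $P,\varphi(P),\dotsc,\varphi^{m-1}(P)$ a family of pairwise disjoint arcs across the (possibly degenerate) annulus between the poles, cutting it into exactly $m$ disc-bounded segments, and $\varphi$ permutes these cyclically. Your bookkeeping about rootings in (ii)--(iii) and about the degenerate cases where a pole is a vertex or an edge is the right caveat, and the argument does go through in all these cases.
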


A direct corollary of Lemma~\ref{lem:spindle} is that $c_0$ and $c_1$
are the only invariant cells under $\varphi$, even under each
$\varphi^i$ with $1\le i\le m-1$. We might thus refer to them as the
\emph{north and south pole of $T$}, not just of the spindle.

By Lemma~\ref{lem:spindle}, we can obtain all triangulations with
rotative symmetry by first constructing all possible spindles and
then inserting the same near-triangulation in each segment. However,
unlike the girdle, a spindle is \emph{not unique} since there might be
different choices for the path $P$. Figure~\ref{fig:spindle:nonunique}
shows two different spindles of the same triangulation. Since the
near-triangulation inserted in the segments in the first case is not
isomorphic to the one used in the second case, we \emph{do not} have a 1-1
correspondence between triangulations with rotative symmetries and
triangulations obtained by taking a spindle and inserting the same
near-triangulation in each segment.

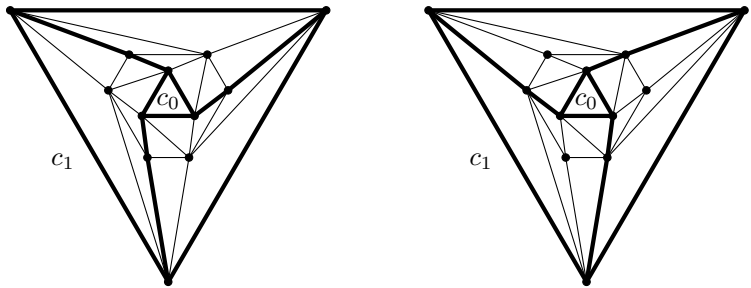
\begin{figure}[htbp]
  \centering
  \begin{tikzpicture}[scale=.4]
    \coordinate (A) at (210:1);
    \coordinate (B) at (330:1);
    \coordinate (C) at (90:1);
    \coordinate (D) at (250:2);
    \coordinate (E) at (290:2);
    \coordinate (F) at (10:2);
    \coordinate (G) at (50:2);
    \coordinate (H) at (130:2);
    \coordinate (I) at (170:2);
    \coordinate (J) at (270:6);
    \coordinate (K) at (30:6);
    \coordinate (L) at (150:6);

    \draw[ultra thick] (A) -- (B) -- (C) -- cycle;
    \draw (D) -- (E) -- (F) -- (G) -- (H) -- (I) -- cycle;
    \draw[ultra thick] (J) -- (K) -- (L) -- cycle;
    \draw[ultra thick] (A) -- (D) -- (J);
    \draw[ultra thick] (B) -- (F) -- (K);
    \draw[ultra thick] (C) -- (H) -- (L);
    \draw (A) -- (I) -- (L);
    \draw (B) -- (E) -- (J);
    \draw (C) -- (G) -- (K);
    \draw (A) -- (E) -- (K);
    \draw (B) -- (G) -- (L);
    \draw (C) -- (I) -- (J);
    \draw (0,0) node {$c_0$};
    \draw (210:4) node {$c_1$};

    \foreach \x in {A,B,C,D,E,F,G,H,I,J,K,L}
      \filldraw (\x) circle (3.5pt);
  \end{tikzpicture}
  \hspace{1cm}
  \begin{tikzpicture}[scale=.4]
    \coordinate (A) at (210:1);
    \coordinate (B) at (330:1);
    \coordinate (C) at (90:1);
    \coordinate (D) at (250:2);
    \coordinate (E) at (290:2);
    \coordinate (F) at (10:2);
    \coordinate (G) at (50:2);
    \coordinate (H) at (130:2);
    \coordinate (I) at (170:2);
    \coordinate (J) at (270:6);
    \coordinate (K) at (30:6);
    \coordinate (L) at (150:6);

    \draw[ultra thick] (A) -- (B) -- (C) -- cycle;
    \draw (D) -- (E) -- (F) -- (G) -- (H) -- (I) -- cycle;
    \draw[ultra thick] (J) -- (K) -- (L) -- cycle;
    \draw (A) -- (D) -- (J);
    \draw (B) -- (F) -- (K);
    \draw (C) -- (H) -- (L);
    \draw[ultra thick] (A) -- (I) -- (L);
    \draw[ultra thick] (B) -- (E) -- (J);
    \draw[ultra thick] (C) -- (G) -- (K);
    \draw (A) -- (E) -- (K);
    \draw (B) -- (G) -- (L);
    \draw (C) -- (I) -- (J);
    \draw (0,0) node {$c_0$};
    \draw (210:4) node {$c_1$};

    \foreach \x in {A,B,C,D,E,F,G,H,I,J,K,L}
      \filldraw (\x) circle (3.5pt);
  \end{tikzpicture}
  \caption{Two spindles (bold) of the same triangulation.}
  \label{fig:spindle:nonunique}
\end{figure}

In order to obtain a 1-1 correspondence, we thus have to refine the
definition of a spindle. To this end, we will first define a
substructure of a triangulation that will be part of our refined
spindles.

\begin{definition}
  A graph is called a \emph{cactus} if it is connected and every two
  cycles in it have at most one vertex in common. It is well known
  that cacti are \emph{outerplanar}, i.e.\ there is an embedding on the
  sphere for which all vertices lie on the boundary of a common face,
  the \emph{outer face}. Every \emph{block} of a cactus---a
  subgraph that cannot be disconnected by deleting a single
  vertex---is a cycle or an edge. If a cactus $G$ has a root vertex,
  this induces a natural order on its set of blocks, similar to a tree
  order: Consider the \emph{block graph} of $G$---the graph whose
  vertices are the vertices separating $G$ and the blocks of $G$ and
  in which a block is adjacent to all separating vertices it contains
  (see Figure~\ref{fig:blockorder}). This block graph is always a tree
  and if we choose its root to be 
  \begin{itemize}
  \item the root of $G$ if it is a separating vertex or otherwise
  \item the block of $G$ containing the root,
  \end{itemize}
  then this induces a tree order on the block graph and hence in
  particular an order on the set of blocks of $G$. In this order
  the blocks that contain the root are the minimal elements.

	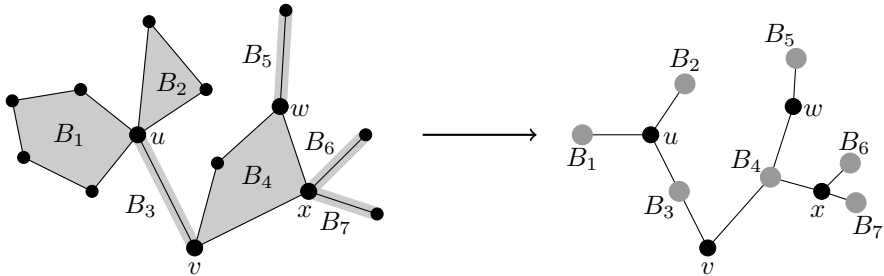
\begin{figure}[htbp]
	  \centering
	  \begin{tikzpicture}[scale=1.5]
	    \coordinate (A) at (0,0);
	    \coordinate (B) at (-.5,1);
	    \coordinate (C) at (.2,.75);
	    \coordinate (D) at (.75,1.25);
	    \coordinate (E) at (1,.5);
	    \coordinate (F) at (-.4,2);
	    \coordinate (G) at (.1,1.4);
	    \coordinate (H) at (-.9,.5);
	    \coordinate (I) at (-1.5,.8);
	    \coordinate (J) at (-1.6,1.3);
	    \coordinate (K) at (-1,1.4);
	    \coordinate (L) at (.8,2.1);
	    \coordinate (M) at (1.5,1);
	    \coordinate (N) at (1.6,.3);
	    
	    \draw[black!20,line width=5pt] (A) -- (B);
	    \filldraw[black!20] (A) -- (C) -- (D) -- (E) -- cycle;
	    \filldraw[black!20] (B) -- (F) -- (G) -- cycle;
	    \filldraw[black!20] (B) -- (H) -- (I) -- (J) -- (K) -- cycle;
	    \draw[black!20,line width=5pt] (D) -- (L);
	    \draw[black!20,line width=5pt] (E) -- (M);
	    \draw[black!20,line width=5pt] (E) -- (N);
	
	    \draw (A) -- (B);
	    \draw (A) -- (C) -- (D) -- (E) -- cycle;
	    \draw (B) -- (F) -- (G) -- cycle;
	    \draw (B) -- (H) -- (I) -- (J) -- (K) -- cycle;
	    \draw (D) -- (L);
	    \draw (E) -- (M);
	    \draw (E) -- (N);
	    
	    \filldraw (A) circle (2pt);
	    \filldraw (B) circle (2pt);
	    \filldraw (C) circle (1.5pt);
	    \filldraw (D) circle (2pt);
	    \filldraw (E) circle (2pt);
	    \filldraw (F) circle (1.5pt);
	    \filldraw (G) circle (1.5pt);
	    \filldraw (H) circle (1.5pt);
	    \filldraw (I) circle (1.5pt);
	    \filldraw (J) circle (1.5pt);
	    \filldraw (K) circle (1.5pt);
	    \filldraw (L) circle (1.5pt);
	    \filldraw (M) circle (1.5pt);
	    \filldraw (N) circle (1.5pt);
	    
	    \draw[thick,->] (2,1) -- (3,1);
	    
	    \coordinate (Center) at (4.5,0);
	    \coordinate (AB) at ($(A)!.5!(B)$);
	    \coordinate (ACDE) at (.55,.625);
	    \coordinate (DL) at ($(D)!.5!(L)$);
	    \coordinate (EM) at ($(E)!.5!(M)$);
	    \coordinate (EN) at ($(E)!.5!(N)$);
	    \coordinate (BFG) at (-.2,1.45);
	    \coordinate (BHIJK) at (-1.1,1);
	    
	    \draw (B) +(5pt,-1pt) node {$u$};
	    \draw (A) +(0,-5pt) node {$v$};
	    \draw (D) +(5pt,-1pt) node {$w$};
	    \draw (E) +(-1pt,-5pt) node {$x$};
	    \draw (BHIJK) node {$B_1$};
	    \draw (BFG) node {$B_2$};
	    \draw (AB) node[anchor=30] {$B_3$};
	    \draw (ACDE) node {$B_4$};
	    \draw (DL) node[anchor=355] {$B_5$};
	    \draw (EM) node[anchor=310] {$B_6$};
	    \draw (EN) node[anchor=70] {$B_7$};
	    
	    \draw (Center) +(A) -- +(AB);
	    \draw (Center) +(A) -- +(ACDE);
	    \draw (Center) +(AB) -- +(B);
	    \draw (Center) +(B) -- +(BFG);
	    \draw (Center) +(B) -- +(BHIJK);
	    \draw (Center) +(ACDE) -- +(D);
	    \draw (Center) +(ACDE) -- +(E);
	    \draw (Center) +(D) -- +(DL);
	    \draw (Center) +(E) -- +(EM);
	    \draw (Center) +(E) -- +(EN);
	    
	    \filldraw (Center) +(A) circle (2pt);
	    \filldraw (Center) +(B) circle (2pt);
	    \filldraw (Center) +(D) circle (2pt);
	    \filldraw (Center) +(E) circle (2pt);
	    \filldraw[black!40] (Center) +(AB) circle (2.5pt);
	    \filldraw[black!40] (Center) +(ACDE) circle (2.5pt);
	    \filldraw[black!40] (Center) +(DL) circle (2.5pt);
	    \filldraw[black!40] (Center) +(EM) circle (2.5pt);
	    \filldraw[black!40] (Center) +(EN) circle (2.5pt);
	    \filldraw[black!40] (Center) +(BFG) circle (2.5pt);
	    \filldraw[black!40] (Center) +(BHIJK) circle (2.5pt);
	    
	    \draw (Center) ++(B) +(5pt,-1pt) node {$u$};
	    \draw (Center) ++(A) +(0,-5pt) node {$v$};
	    \draw (Center) ++(D) +(5pt,-1pt) node {$w$};
	    \draw (Center) ++(E) +(-1pt,-5pt) node {$x$};
	    \draw (Center) ++(BHIJK) +(0,-6pt) node {$B_1$};
	    \draw (Center) ++(BFG) +(0,6pt) node {$B_2$};
	    \draw (Center) ++(AB) +(-5pt,-4pt) node {$B_3$};
	    \draw (Center) ++(ACDE) +(-6pt,4pt) node {$B_4$};
	    \draw (Center) ++(DL) +(-4pt,6pt) node {$B_5$};
	    \draw (Center) ++(EM) +(1pt,5pt) node {$B_6$};
	    \draw (Center) ++(EN) +(3pt,-6pt) node {$B_7$};
	  \end{tikzpicture}
	  \caption{A cactus and its block graph.}
	  \label{fig:blockorder}
	\end{figure}

  Let $k\ge 2$ and let $G$ be an outerplanar subgraph of $T$ for
  which the north pole $c_0$ lies in its outer face. We call $G$ a
  \emph{plane symmetric cactus of order $m=|H|$} if it is a cactus and
  invariant under all elements of the group $H\subseteq\Aut(c_0,T)$. Clearly, the outer
  face of $G$ is invariant under these automorphisms and by
  Lemma~\ref{lem:invariant}, $G$ has another invariant cell. If this
  cell is a cell of $T$, then it is invariant under rotations and
  hence the south pole $c_1$ of $T$, in which case we call $G$
  \emph{antarctic} (see Figure~\ref{fig:cactus:antarctic}). If it is
  not a cell of $T$, then it is a face of $G$ whose boundary is a
  cycle and hence contains an invariant cell of $T$ by the Brouwer
  fixed-point theorem. Again, this cell is $c_1$.
  Either way, we obtain that $G$ has a \emph{unique} invariant cell
  $c$ which is not its outer face. The subgraph of $G$ consisting of
  $c$ (if $c$ is a vertex or an edge) and all vertices and edges on
  the boundary of $c$ is called the \emph{centre} of $G$. The
  maximal connected subgraphs of $G$ that share precisely one vertex
  with the centre are called \emph{branches} of $G$ (see
  Figure~\ref{fig:cactus:general}); the vertex of a branch $B$ that
  lies in the centre of $G$ is called the \emph{base} of $B$. Note that if $G$ is antarctic, then
  it has precisely $1$, $2$, or $3$ branches, depending on whether
  $c_1$ is a vertex, an edge, or a face (see Figure~\ref{fig:cactus:antarctic}).

  \begin{figure}[htbp]
    \centering
    \begin{tikzpicture}[scale=1.5]
      \coordinate (Centre1) at (5,0);
      \coordinate (A) at (0:.75);
      \coordinate (B) at (120:.75);
      \coordinate (C) at (240:.75);
      \coordinate (AD) at (45:.5);
      \coordinate (AE) at (0:.5);
      \coordinate (AF) at (315:.5);
      \coordinate (BD) at (165:.5);
      \coordinate (BE) at (120:.5);
      \coordinate (BF) at (75:.5);
      \coordinate (CD) at (285:.5);
      \coordinate (CE) at (240:.5);
      \coordinate (CF) at (195:.5);
      
      \draw (Centre1) +(A) -- +(B) -- +(C) -- cycle;
      \draw (Centre1) ++(A) -- +(AD);
      \draw (Centre1) ++(A) -- +(AE) -- +(AF) -- cycle;
      \draw (Centre1) ++(B) -- +(BD);
      \draw (Centre1) ++(B) -- +(BE) -- +(BF) -- cycle;
      \draw (Centre1) ++(C) -- +(CD);
      \draw (Centre1) ++(C) -- +(CE) -- +(CF) -- cycle;
      
      \filldraw (Centre1) +(A) circle (1.5pt);
      \filldraw (Centre1) +(B) circle (1.5pt);
      \filldraw (Centre1) +(C) circle (1.5pt);
      \filldraw (Centre1) ++(A) +(AD) circle (1.5pt);
      \filldraw (Centre1) ++(A) +(AE) circle (1.5pt);
      \filldraw (Centre1) ++(A) +(AF) circle (1.5pt);
      \filldraw (Centre1) ++(B) +(BD) circle (1.5pt);
      \filldraw (Centre1) ++(B) +(BE) circle (1.5pt);
      \filldraw (Centre1) ++(B) +(BF) circle (1.5pt);
      \filldraw (Centre1) ++(C) +(CD) circle (1.5pt);
      \filldraw (Centre1) ++(C) +(CE) circle (1.5pt);
      \filldraw (Centre1) ++(C) +(CF) circle (1.5pt);
      
      \draw (Centre1) +(0,0) node {\large $c_1$};
      
      \coordinate (Centre2) at (2.5,0);
      \coordinate (G) at (0,.5);
      \coordinate (H) at (0,-.5);
      \coordinate (GI) at (135:.5);
      \coordinate (GJ) at (90:.5);
      \coordinate (GK) at (45:.5);
      \coordinate (HI) at (315:.5);
      \coordinate (HJ) at (270:.5);
      \coordinate (HK) at (225:.5);
      
      \draw (Centre2) +(G) -- +(H);
      \draw (Centre2) ++(G) -- +(GI);
      \draw (Centre2) ++(G) -- +(GJ) -- +(GK) -- cycle;
      \draw (Centre2) ++(H) -- +(HI);
      \draw (Centre2) ++(H) -- +(HJ) -- +(HK) -- cycle;
      
      \filldraw (Centre2) ++(G) circle (1.5pt);
      \filldraw (Centre2) ++(H) circle (1.5pt);
      \filldraw (Centre2) ++(G) +(GI) circle (1.5pt);
      \filldraw (Centre2) ++(G) +(GJ) circle (1.5pt);
      \filldraw (Centre2) ++(G) +(GK) circle (1.5pt);
      \filldraw (Centre2) ++(H) +(HI) circle (1.5pt);
      \filldraw (Centre2) ++(H) +(HJ) circle (1.5pt);
      \filldraw (Centre2) ++(H) +(HK) circle (1.5pt);
      
      \draw (Centre2) +($(G)!.5!(H)$) node[anchor=east] {\large $c_1$};
      
      \coordinate (Centre3) at (0,0);
      \coordinate (L1) at (105:1);
      \coordinate (M1) at (90:1);
      \coordinate (N1) at (75:1);
      \coordinate (L2) at (195:1);
      \coordinate (M2) at (180:1);
      \coordinate (N2) at (165:1);
      \coordinate (L3) at (285:1);
      \coordinate (M3) at (270:1);
      \coordinate (N3) at (255:1);
      \coordinate (L4) at (15:1);
      \coordinate (M4) at (0:1);
      \coordinate (N4) at (345:1);
      
      \draw (Centre3) -- +(L1);
      \draw (Centre3) -- +(M1) -- +(N1) -- cycle;
      \draw (Centre3) -- +(L2);
      \draw (Centre3) -- +(M2) -- +(N2) -- cycle;
      \draw (Centre3) -- +(L3);
      \draw (Centre3) -- +(M3) -- +(N3) -- cycle;
      \draw (Centre3) -- +(L4);
      \draw (Centre3) -- +(M4) -- +(N4) -- cycle;
      
      \filldraw (Centre3) circle (1.5pt);
      \filldraw (Centre3) +(L1) circle (1.5pt);
      \filldraw (Centre3) +(M1) circle (1.5pt);
      \filldraw (Centre3) +(N1) circle (1.5pt);
      \filldraw (Centre3) +(L2) circle (1.5pt);
      \filldraw (Centre3) +(M2) circle (1.5pt);
      \filldraw (Centre3) +(N2) circle (1.5pt);
      \filldraw (Centre3) +(L3) circle (1.5pt);
      \filldraw (Centre3) +(M3) circle (1.5pt);
      \filldraw (Centre3) +(N3) circle (1.5pt);
      \filldraw (Centre3) +(L4) circle (1.5pt);
      \filldraw (Centre3) +(M4) circle (1.5pt);
      \filldraw (Centre3) +(N4) circle (1.5pt);
      
      \draw (Centre3) node[anchor=315] {\large $c_1$};
    \end{tikzpicture}
    \caption{The three types of antarctic plane symmetric cacti. Note
      that if $c_1$ is a vertex, then the centre of the antarctic
      cactus $C$ is $c_1$; thus, the only branch of $C$ is $C$ itself.}
    \label{fig:cactus:antarctic}
  \end{figure}
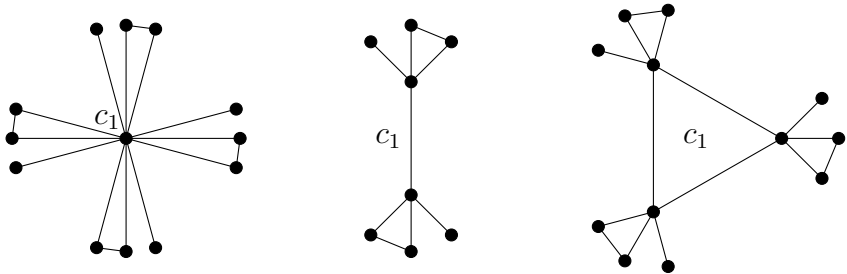

	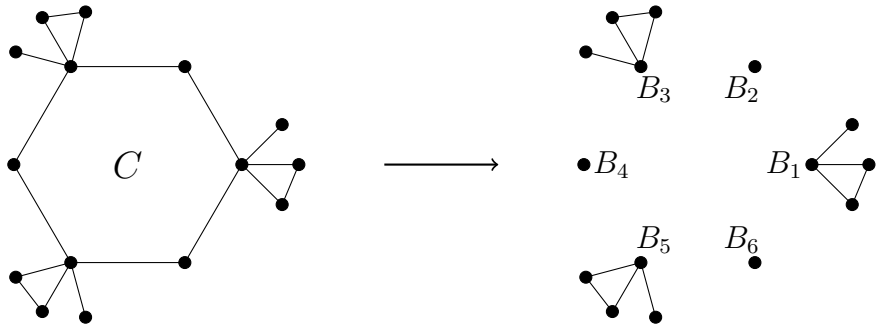
\begin{figure}[htbp]
	  \centering
	  \begin{tikzpicture}[scale=1.5]
	    \coordinate (A) at (0:1);
	    \coordinate (B) at (60:1);
	    \coordinate (C) at (120:1);
	    \coordinate (D) at (180:1);
	    \coordinate (E) at (240:1);
	    \coordinate (F) at (300:1);
	    \coordinate (AG) at (45:.5);
	    \coordinate (AH) at (0:.5);
	    \coordinate (AI) at (315:.5);
	    \coordinate (CG) at (165:.5);
	    \coordinate (CH) at (120:.5);
	    \coordinate (CI) at (75:.5);
	    \coordinate (EG) at (285:.5);
	    \coordinate (EH) at (240:.5);
	    \coordinate (EI) at (195:.5);
	    
	    \draw (A) -- (B) -- (C) -- (D) -- (E) -- (F) -- cycle;
	    \draw (A) -- +(AG);
	    \draw (A) -- +(AH) -- +(AI) -- cycle;
	    \draw (C) -- +(CG);
	    \draw (C) -- +(CH) -- +(CI) -- cycle;
	    \draw (E) -- +(EG);
	    \draw (E) -- +(EH) -- +(EI) -- cycle;
	    
	    \filldraw (A) circle (1.5pt);
	    \filldraw (B) circle (1.5pt);
	    \filldraw (C) circle (1.5pt);
	    \filldraw (D) circle (1.5pt);
	    \filldraw (E) circle (1.5pt);
	    \filldraw (F) circle (1.5pt);
	    \filldraw (A) +(AG) circle (1.5pt);
	    \filldraw (A) +(AH) circle (1.5pt);
	    \filldraw (A) +(AI) circle (1.5pt);
	    \filldraw (C) +(CG) circle (1.5pt);
	    \filldraw (C) +(CH) circle (1.5pt);
	    \filldraw (C) +(CI) circle (1.5pt);
	    \filldraw (E) +(EG) circle (1.5pt);
	    \filldraw (E) +(EH) circle (1.5pt);
	    \filldraw (E) +(EI) circle (1.5pt);
	    
	    \draw (0,0) node {\Large $C$};
	    
	    \draw[thick,->] (2.25,0) -- (3.25,0);
	    
	    \coordinate (Center) at (5,0);
	    
	    \draw (Center) ++(A) -- +(AG);
	    \draw (Center) ++(A) -- +(AH) -- +(AI) -- cycle;
	    \draw (Center) ++(C) -- +(CG);
	    \draw (Center) ++(C) -- +(CH) -- +(CI) -- cycle;
	    \draw (Center) ++(E) -- +(EG);
	    \draw (Center) ++(E) -- +(EH) -- +(EI) -- cycle;
	    
	    \filldraw (Center) ++(A) node[anchor=0] {\large $B_1$} circle (1.5pt);
	    \filldraw (Center) ++(B) node[anchor=60] {\large $B_2$} circle (1.5pt);
	    \filldraw (Center) ++(C) node[anchor=120] {\large $B_3$} circle (1.5pt);
	    \filldraw (Center) ++(D) node[anchor=180] {\large $B_4$} circle (1.5pt);
	    \filldraw (Center) ++(E) node[anchor=240] {\large $B_5$} circle (1.5pt);
	    \filldraw (Center) ++(F) node[anchor=300] {\large $B_6$} circle (1.5pt);
	    \filldraw (Center) ++(A) +(AG) circle (1.5pt);
	    \filldraw (Center) ++(A) +(AH) circle (1.5pt);
	    \filldraw (Center) ++(A) +(AI) circle (1.5pt);
	    \filldraw (Center) ++(C) +(CG) circle (1.5pt);
	    \filldraw (Center) ++(C) +(CH) circle (1.5pt);
	    \filldraw (Center) ++(C) +(CI) circle (1.5pt);
	    \filldraw (Center) ++(E) +(EG) circle (1.5pt);
	    \filldraw (Center) ++(E) +(EH) circle (1.5pt);
	    \filldraw (Center) ++(E) +(EI) circle (1.5pt);
	  \end{tikzpicture}
	  \caption{A plane symmetric cactus of order $3$ with centre $C$
	    and branches $B_1,\dotsc,B_6$.}
	  \label{fig:cactus:general}
	\end{figure}

  If $G$ is not antarctic and in addition the boundary of the south pole $c_1$ of $T$
  meets the boundary of the centre of $G$, then the south pole has to
  be a face or an edge and by symmetry all vertices on its boundary
  lie in the centre of $G$. In this case, we call $G$
  \emph{pseudo-antarctic} (see Figure~\ref{fig:cactus:pseudoantarctic}).

	\begin{figure}[htbp]
	  \centering
	  \begin{tikzpicture}[scale=1.5]
	    \coordinate (A) at (0:1);
	    \coordinate (B) at (60:1);
	    \coordinate (C) at (120:1);
	    \coordinate (D) at (180:1);
	    \coordinate (E) at (240:1);
	    \coordinate (F) at (300:1);
	    \coordinate (AG) at (45:.5);
	    \coordinate (AH) at (0:.5);
	    \coordinate (AI) at (315:.5);
	    \coordinate (CG) at (165:.5);
	    \coordinate (CH) at (120:.5);
	    \coordinate (CI) at (75:.5);
	    \coordinate (EG) at (285:.5);
	    \coordinate (EH) at (240:.5);
	    \coordinate (EI) at (195:.5);
	    
	    \draw[dashed] (B) -- (D) -- (F) -- cycle;
	    
	    \draw (A) -- (B) -- (C) -- (D) -- (E) -- (F) -- cycle;
	    \draw (A) -- +(AG);
	    \draw (A) -- +(AH) -- +(AI) -- cycle;
	    \draw (C) -- +(CG);
	    \draw (C) -- +(CH) -- +(CI) -- cycle;
	    \draw (E) -- +(EG);
	    \draw (E) -- +(EH) -- +(EI) -- cycle;
	    
	    \filldraw (A) circle (1.5pt);
	    \filldraw (B) circle (1.5pt);
	    \filldraw (C) circle (1.5pt);
	    \filldraw (D) circle (1.5pt);
	    \filldraw (E) circle (1.5pt);
	    \filldraw (F) circle (1.5pt);
	    \filldraw (A) +(AG) circle (1.5pt);
	    \filldraw (A) +(AH) circle (1.5pt);
	    \filldraw (A) +(AI) circle (1.5pt);
	    \filldraw (C) +(CG) circle (1.5pt);
	    \filldraw (C) +(CH) circle (1.5pt);
	    \filldraw (C) +(CI) circle (1.5pt);
	    \filldraw (E) +(EG) circle (1.5pt);
	    \filldraw (E) +(EH) circle (1.5pt);
	    \filldraw (E) +(EI) circle (1.5pt);
	    
	    \draw (0,0) node {\large $c_1$};
	    
	    \coordinate (Center) at (4,0);
	    
	    \draw[dashed] (Center) +(A) -- node[anchor=south] {\large $c_1$} +(D);
	    
	    \draw (Center) +(A) -- +(B) -- +(C) -- +(D) -- +(E) -- +(F) -- cycle;
	    \draw (Center) ++(A) -- +(AG);
	    \draw (Center) ++(A) -- +(AH) -- +(AI) -- cycle;
	    \draw (Center) ++(C) -- +(CG);
	    \draw (Center) ++(C) -- +(CH) -- +(CI) -- cycle;
	    \draw (Center) ++(E) -- +(EG);
	    \draw (Center) ++(E) -- +(EH) -- +(EI) -- cycle;
	    
	    \filldraw (Center) ++(A) circle (1.5pt);
	    \filldraw (Center) ++(B) circle (1.5pt);
	    \filldraw (Center) ++(C) circle (1.5pt);
	    \filldraw (Center) ++(D) circle (1.5pt);
	    \filldraw (Center) ++(E) circle (1.5pt);
	    \filldraw (Center) ++(F) circle (1.5pt);
	    \filldraw (Center) ++(A) +(AG) circle (1.5pt);
	    \filldraw (Center) ++(A) +(AH) circle (1.5pt);
	    \filldraw (Center) ++(A) +(AI) circle (1.5pt);
	    \filldraw (Center) ++(C) +(CG) circle (1.5pt);
	    \filldraw (Center) ++(C) +(CH) circle (1.5pt);
	    \filldraw (Center) ++(C) +(CI) circle (1.5pt);
	    \filldraw (Center) ++(E) +(EG) circle (1.5pt);
	    \filldraw (Center) ++(E) +(EH) circle (1.5pt);
	    \filldraw (Center) ++(E) +(EI) circle (1.5pt);
	  \end{tikzpicture}
	  \caption{The two possibilities for a pseudo-antarctic plane
	    symmetric cactus.}
	  \label{fig:cactus:pseudoantarctic}
	\end{figure}
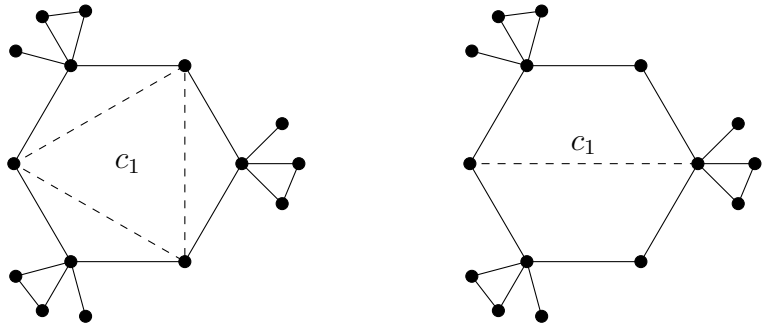
\end{definition}

Note that the above definition allows the case that the branches of a plane
symmetric cactus are just the vertices of its centre, in particular
every invariant cycle is a plane symmetric cactus. Furthermore, a
plane symmetric cactus of order $k$ is also a plane symmetric cactus
of order $\ell$ for every divisor $\ell\ge2$ of $k$.

Plane symmetric cacti appear in a natural way when we move from the
north pole towards the south pole of the triangulation:

\begin{lemma}\label{lem:cactus}
  Let $C$ be a cycle in $T$ that is invariant under $\varphi$ (and
  thus a plane symmetric cactus of order $m$). Suppose that $C$ is
  neither antarctic nor pseudo-antarctic and let $f$ be the face of
  $C$ that contains the south pole. Denote by $\mathcal{F}$ the set
  of all faces of $T$ that are contained in $f$ and whose boundaries
  meet $C$. Let $F$ be the subgraph of $T$ consisting of all vertices
  and edges that lie on the boundary of a face $f'\in\mathcal{F}$ but
  do not lie in $C$ or have an incident vertex in $C$. Then $F$ has a
  component that is a plane symmetric cactus of order $m$.
\end{lemma}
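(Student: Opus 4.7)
My approach is topological: starting from the south pole $c_1$, I identify an $H$-invariant simply-connected closed region $R_0$ inside the disc bounded by $C$, show that its topological boundary $\partial R_0$ lies in $F$ and forms a cactus subgraph, and then prove that the component of $F$ containing $\partial R_0$ is the desired plane symmetric cactus. Let $D\subseteq f\cup C$ denote the closed disc bounded by $C$ on the $c_1$-side, and let $\mathcal{G}$ be the set of faces of $T$ inside $f$ that are \emph{not} in $\mathcal{F}$. Since $C$ is neither antarctic nor pseudo-antarctic, $c_1$ lies strictly in the interior of $D$ (for a face we have $c_1\in\mathcal{G}$; for an edge or vertex the vertices on its boundary are inner). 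Define $R_0$ as the connected component of $\overline{c_1}\cup\bigcup_{f'\in\mathcal{G}}\overline{f'}$ containing $c_1$. The set $R_0$ is $H$-invariant since $c_1$ is fixed by every $\varphi\in H$ and $H$ permutes $\mathcal{G}$.

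To see that $R_0$ is simply connected, I would exclude any hole, i.e., any bounded component $U$ of $D\setminus R_0$. Such a $U$ would contain some face of $T$, but no $\mathcal{F}$-face can lie in $U$ because each $\mathcal{F}$-face meets $C\subseteq\partial D$, which lies in the unbounded component of $D\setminus R_0$; and no $\mathcal{G}$-face can lie in $U$ either, since it would belong to a different component of $\bigcup_{f'\in\mathcal{G}}\overline{f'}$, separated from $R_0$ by $\mathcal{F}$-faces, which we just ruled out. Hence $\partial R_0$ is a single closed facial walk in $T$, forming a cactus subgraph (its blocks are the simple cycles separated by its cut vertices). Each vertex and edge of $\partial R_0$ meets both a $\mathcal{G}$-face (or $c_1$ itself) from inside $R_0$ and an $\mathcal{F}$-face from outside, so $\partial R_0 \subseteq F$.

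Let $F_0$ be the component of $F$ containing $\partial R_0$. It is $H$-invariant (as $\partial R_0$ is) and outerplanar with $c_0$ in its outer face (since $F_0\subseteq D$ while $c_0\notin D$). The principal obstacle is to verify that $F_0$ is a cactus, since it may strictly contain $\partial R_0$: inner vertices lying in the interior of the $\mathcal{F}$-star $X:=\bigcup_{f'\in\mathcal{F}}\overline{f'}$ that are adjacent to $V(C)$ may attach as branches to $\partial R_0$. I would handle this by a local analysis of the cyclic order of $T$-edges around each vertex $v\in V(\partial R_0)$: the $\mathcal{F}$-triangles contributing edges of $F_0\setminus\partial R_0$ at $v$ occupy a set of consecutive sectors between the $\partial R_0$-edges at $v$, so every such branch attaches at the single base vertex $v$ and is itself the boundary of a simply-connected subregion of $X$, hence a cactus by the same argument applied one level inward. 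This yields that $F_0$ is the required plane symmetric cactus of order $m=|H|$.
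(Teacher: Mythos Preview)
Your approach is substantially more involved than the paper's, and the final paragraph---where you argue that the component $F_0$ is a cactus---contains a genuine gap. The claims that the $\mathcal{F}$-triangles contributing edges of $F_0\setminus\partial R_0$ at a vertex $v$ occupy ``consecutive sectors'', that each resulting branch attaches at a single base vertex, and that ``the same argument applied one level inward'' terminates and yields a cactus, are asserted but not justified. In particular, nothing you have written rules out an edge $e\in F_0\setminus\partial R_0$ joining two vertices $a,b$ lying on the \emph{same} cycle of the cactus $\partial R_0$; such an edge would be a chord and would destroy the cactus property of $F_0$. Your recursive sketch does not address this, and the well-foundedness of the recursion is not made explicit.

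The paper sidesteps all of this with one global observation: every edge $e$ of $F$ lies on the boundary of the outer face of $F$. Indeed, $e$ is by construction on the boundary of some triangle $f'\in\mathcal F$, and since neither endpoint of $e$ lies on $C$, the third vertex of $f'$ must lie on $C$; hence $f'$ (together with the two edges from that vertex) connects $e$ to the region outside $C$, which is part of the outer face of $F$. Once every edge of $F$ is on the outer boundary, no two cycles of $F$ can share more than one vertex, so \emph{every} component of $F$ is automatically a cactus. The desired component is then simply the one that either contains $c_1$ or has $c_1$ in one of its bounded faces; its invariance under $H$ follows from the invariance of $c_1$. Your construction of $R_0$ correctly identifies this same component, but the topological work you invest in it is unnecessary once the outer-boundary property of $F$ is observed, and that property is also exactly what would close the gap in your last paragraph.
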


\begin{proof}
  By construction, $F$ is outerplanar and all its edges lie on the
  boundary of its outer face. Thus, no two of its cycles can meet in
  more than one vertex, showing that all components of $F$ are cacti.
  The south pole $c_1$ is not contained in the outer face of $F$ by
  construction, therefore there is a component $F_1$ of $F$ such that
  either
  \begin{itemize}
  \item
    $c_1$ is contained in $F_1$ or
  \item
    $c_1$ is contained in a face of $F_1$ that is not its outer face.
  \end{itemize}
  In either case, $F_1$ is invariant under $\varphi$ (and hence under
  all elements of $H$) and thus a plane symmetric cactus of order $m$.
\end{proof}

Repeated application of Lemma~\ref{lem:cactus} gives rise to a finite
sequence $F_0,\dotsc,F_k$ of plane symmetric cacti in $T$ as follows.
We start by letting $F_0$ be the invariant cycle ``closest'' to $c_0$
like in Figure~\ref{fig:invariantcycles}: if $c_0$ is a face, let
$F_0$ be its boundary. If $c_0$ is an edge, let $F_0$ consist of all
vertices and edges, apart from $c_0$ itself, that lie on the boundary
of a face incident with $c_0$. Finally, if $c_0$ is a vertex, let
$F_0$ consist of all vertices adjacent to $c_0$ and all edges that lie
opposite to $c_0$ at some face incident with $c_0$. Note that in
either case, $F_0$ is a cycle whose length is a multiple of $m$.

If $F_0$ is antarctic or pseudo-antarctic, the sequence ends with
$k=0$; otherwise, by applying Lemma~\ref{lem:cactus} with $C=F_0$, we
obtain a plane symmetric cactus $F_1$ of order $m$. If $F_1$ is
antarctic or pseudo-antarctic, we stop; otherwise, we apply
Lemma~\ref{lem:cactus} with $C$ being the centre of
$F_1$ to obtain another plane symmetric cactus $F_2$ of order $m$.
We continue this way until we obtain an antarctic or pseudo-antarctic
plane symmetric cactus $F_k$. We call the graphs $F_0,\dotsc,F_k$ the
\emph{levels} of $T$ (see Figure~\ref{fig:spindle:levels}) and denote
their centres by $C_0,\dotsc,C_k$.

\begin{figure}[htbp]
  \captionsetup{singlelinecheck=off}
  \centering
  \begin{tikzpicture}
    \begin{scope}[scale=.7,yshift=1.5cm]
    \coordinate (A) at (210:.8);
    \coordinate (B) at (330:.8);
    \coordinate (C) at (90:.8);
    \coordinate (D) at (250:2.5);
    \coordinate (E) at (290:2.5);
    \coordinate (F) at (10:2.5);
    \coordinate (G) at (50:2.5);
    \coordinate (H) at (130:2.5);
    \coordinate (I) at (170:2.5);
    \coordinate (J) at (280:1);
    \coordinate (K) at (296.5:1.12);
    \coordinate (L) at (290:1.8);
    \coordinate (M) at (40:1);
    \coordinate (N) at (56.5:1.12);
    \coordinate (O) at (50:1.8);
    \coordinate (P) at (160:1);
    \coordinate (Q) at (176.5:1.12);
    \coordinate (R) at (170:1.8);
    \coordinate (S) at (270:6);
    \coordinate (T) at (30:6);
    \coordinate (U) at (150:6);
    \coordinate (V) at (270:4);
    \coordinate (W) at (30:4);
    \coordinate (X) at (150:4);

    \filldraw[black!20] (A) -- (B) -- (C) -- cycle;
    \draw[ultra thick] (A) -- (B) -- (C) -- cycle;
    \draw[ultra thick] (D) -- (E) -- (F) -- (G) -- (H) -- (I) -- cycle;
    \draw[ultra thick] (S) -- (T) -- (U) -- cycle;
    \draw (A) -- (J);
    \draw (A) -- (L);
    \draw (B) -- (J);
    \draw (B) -- (K);
    \draw (B) -- (L);
    \draw[ultra thick] (J) -- (K) -- (L) -- cycle;
    \draw[ultra thick] (L) -- (E);
    \draw (B) -- (M);
    \draw (B) -- (O);
    \draw (C) -- (M);
    \draw (C) -- (N);
    \draw (C) -- (O);
    \draw[ultra thick] (M) -- (N) -- (O) -- cycle;
    \draw[ultra thick] (O) -- (G);
    \draw (C) -- (P);
    \draw (C) -- (R);
    \draw (A) -- (P);
    \draw (A) -- (Q);
    \draw (A) -- (R);
    \draw[ultra thick] (P) -- (Q) -- (R) -- cycle;
    \draw[ultra thick] (R) -- (I);
    \draw (A) -- (D) -- (S);
    \draw (B) -- (F) -- (T);
    \draw (C) -- (H) -- (U);
    \draw (A) -- (I) -- (U);
    \draw (B) -- (E) -- (S);
    \draw (C) -- (G) -- (T);
    \draw (A) -- (E) -- (T);
    \draw (B) -- (G) -- (U);
    \draw (C) -- (I) -- (S);
    \draw (D) -- (V) -- (E);
    \draw[ultra thick] (V) -- (S);
    \draw (F) -- (W) -- (G);
    \draw[ultra thick] (W) -- (T);
    \draw (H) -- (X) -- (I);
    \draw[ultra thick] (X) -- (U);
    \draw (0,0) node {$c_0$};
    \draw (210:4) node {$c_1$};
    \draw (330:1.2) node {{\boldmath$F_0$}};
    \draw (30:2.7) node {{\boldmath$F_1$}};
    \draw (90:3.3) node {{\boldmath$F_2$}};
    \draw[black!0] (0,-7.5) circle (1pt);
    \draw (0,-6.75) node {(i)};
    \end{scope}
    \begin{scope}[scale=.4,xshift=13cm]
    \coordinate (A) at (0,0);
    \coordinate (B) at (0:3);
    \coordinate (C) at (45:1);
    \coordinate (D) at (90:5);
    \coordinate (E) at (135:1);
    \coordinate (F) at (180:3);
    \coordinate (G) at (225:1);
    \coordinate (H) at (270:5);
    \coordinate (I) at (315:1);
    \coordinate (J) at (45:2);
    \coordinate (K) at (225:2);
    \coordinate (W) at (0:5);
    \coordinate (X) at (90:8);
    \coordinate (Y) at (180:5);
    \coordinate (Z) at (270:8);

    \draw (A) -- (B);
    \draw (A) -- (C);
    \draw (A) -- (D);
    \draw (A) -- (E);
    \draw (A) -- (F);
    \draw (A) -- (G);
    \draw (A) -- (H);
    \draw (A) -- (I);
    \draw[ultra thick] (B) -- (C) -- (D);
    \draw[ultra thick] (D) -- (E) -- (F);
    \draw[ultra thick] (F) -- (G) -- (H);
    \draw[ultra thick] (H) -- (I) -- (B);
    \draw (B) -- (J);
    \draw (C) -- (J);
    \draw (D) -- (J);
    \draw (F) -- (K);
    \draw (G) -- (K);
    \draw (H) -- (K);
    \draw (B) -- (D) -- (F) -- (H) -- cycle;
    \draw (B) -- (W);
    \draw (D) -- (X);
    \draw (F) -- (Y);
    \draw (H) -- (Z);
    \draw (D) -- (W);
    \draw (D) -- (Y);
    \draw (H) -- (Y);
    \draw (H) -- (W);
    \draw[ultra thick] (W) -- (X) -- (Y) -- (Z) -- cycle;
    \draw (Z) arc (-90:90:8);
    \filldraw (A) circle (7.5pt);
    \filldraw[black!20] (A) circle (5pt);
    \draw[thick,black!0] (.3,0) -- (1.2,0);
    \draw (.75,0) node {$c_0$};
    \draw (7.25,0) node {$c_1$};
    \draw (135:1.75) node {{\boldmath$F_0$}};
    \draw (120:5.5) node {{\boldmath$F_1$}};
    \draw (0,-9.25) node {(ii)};
    \end{scope}
  \end{tikzpicture}
  \caption[levels]{Two triangulations and their levels.
    \begin{enumerate}
    \item A triangulation with three levels $F_0,F_1,F_2$ (bold), each of
      which is a plane symmetric cactus of order $3$. The last level
      $F_2$ is antarctic.
    \item A triangulation with two levels $F_0,F_1$ (bold), both plane
      symmetric cacti of order $2$. The last level $F_1$ is
      pseudo-antarctic.
    \end{enumerate}}
  \label{fig:spindle:levels}
\end{figure}

The idea behind our refined version of a spindle will be as follows:
for a constructive decomposition, we shall need a \emph{unique}
substructure of $T$; something that the spindle was not able to
provide, since the path $P$ was chosen arbitrarily. Instead of
connecting the north pole and the south pole by paths, we will base
our construction on the levels of $T$ and connect them by edges.
Those edges have to be chosen in a unique way, which we will
guarantee by always picking the `leftmost' edge from a given vertex
to the next level---a construction that will be made precise shortly.
Moreover, it will not always be enough to have $m$ edges from each
level to the next. Indeed, if the north pole $c_0$ is a vertex, then
its degree might be a multiple of $m$ and there is no criterion which
of the $d(c_0)$ edges we should choose. We thus have to start with all
these edges.

The starting point of our construction will be vertices
$u_0,\dotsc,u_{am-1}$ on $F_0=C_0$ (precise construction follows in
Construction~\ref{constr:liaison}). We would then like to choose an
edge from each $u_j$ to the level $F_1$. However, not every vertex
$u_j$ necessarily has a neighbour in $F_1$. We will thus walk along
the cycle $C_0$ in clockwise direction from each $u_j$ until we find a
vertex $v_j$ that has a neighbour in $F_1$. In order to decide which
edge from $v_j$ to $F_1$ we will pick, let $e$ be one of the two edges
of $C_0$ at $v_j$ and let $e_j=v_jw_j$ be the first edge in clockwise
direction around $v_j$, starting at $e$, with $w_j\in F_1$. Note that
this definition does not depend on which edge of $C_0$ we choose as
$e$. We call $e_j$ the \emph{leftmost edge} from $v_j$ to $F_1$ and
$w_j$ the \emph{leftmost neighbour} of $v_j$ in $F_1$. We then
continue the construction in $F_1$ by first going to the base of the
branch that contains $w_j$, then walk along the cycle $C_1$ until we
find a vertex that has a neighbour in $F_2$ and so on. We will now
make this construction precise.

\begin{construction}[\liaisonp, sources, targets]\label{constr:liaison}
  We begin our construction by choosing vertices $u_0,\dotsc,u_{am-1}$
  on $F_0=C_0$ as follows (see also Figure~\ref{fig:spindle:start}):
  if $c_0$ is a vertex, let $a:=d(c_0)/m$ and let
  $u_0,\dotsc,u_{am-1}$ be all vertices of $F_0$, where the
  enumeration is in clockwise direction around the north pole. If
  $c_0$ is an edge, let $a:=1$ and let $u_0$ and $u_1$ be the vertices
  of $F_0$ that are not end vertices of $c_0$. Finally, if $c_0$ is a
  face, let $a:=1$ and let $u_0,u_1,u_2$ be the vertices on its
  boundary in clockwise direction. Note that by the choice of $u_0,\dotsc,u_{am-1}$, we have
  $\varphi(u_j) = u_{j+a \pmod{am}}$ for every $j$. With a slight
  abuse of notation, we will omit the modulo term in the index and
  simply write $u_i$ instead of $u_{i\pmod{am}}$. We will use this
  notation also for all other cyclic sequences of vertices throughout
  this section.

  \begin{figure}[htbp]
    \centering
      \hfill
      \begin{tikzpicture}
        \def\u{1.732}
  
        \coordinate (A) at (0,0);
        \coordinate (B) at (45:\u);
        \coordinate (C) at (135:\u);
        \coordinate (D) at (225:\u);
        \coordinate (E) at (315:\u);
        \foreach \x in {B,C,D,E}
        {
          \draw (A) -- (\x);
        }
        \draw[ultra thick] (B) -- (C) -- (D) -- (E) -- cycle;
        \foreach \x in {B,C,D,E}
        {
          \filldraw (\x) circle (2pt);
        }
        \filldraw (A) circle (3.25pt);
        \filldraw[black!20] (A) circle (2.5pt);
        \draw (.05,-.05) node[anchor=west] {$c_0$};
        \draw (B) node[anchor=225] {$u_0$};
        \draw (E) node[anchor=135] {$u_1$};
        \draw (D) node[anchor=45] {$u_2$};
        \draw (C) node[anchor=315] {$u_3$};
        \draw[white] (0,1.5) node[anchor=south] {$u_0$};
        \draw[white] (0,-1.5) node[anchor=north] {$u_1$};
        \draw (0,-2.25) node {(i)};
      \end{tikzpicture}
      \hfill
      \begin{tikzpicture}
        \coordinate (A) at (90:1);
        \coordinate (B) at (210:1);
        \coordinate (C) at (330:1);
        \coordinate (D) at (90:.5);
        \coordinate (E) at (270:.5);
        \coordinate (F) at (150:1);
        \coordinate (G) at (270:1);
        \coordinate (H) at (30:1);
        \draw[line width=3.5pt] (D) +(B) -- +(C);
        \draw[black!20,line width=2pt] (D) +(B) -- +(C);
        \draw[ultra thick] (D) +(C) -- +(A) -- +(B);
        \draw[ultra thick] (E) +(F) -- +(G) -- +(H);
        \foreach \x in {A,B,C}
        {
          \filldraw (D) +(\x) circle (2pt);
        }
        \filldraw (E) +(G) circle (2pt);
        \draw (0,0) node[anchor=south] {$c_0$};
        \draw (D) +(A) node[anchor=south] {$u_0$};
        \draw (E) +(G) node[anchor=north] {$u_1$};
        \draw (0,-2.25) node {(ii)};
      \end{tikzpicture}
      \hfill
      \begin{tikzpicture}
        \draw[white] (-1,-1.5) -- (1,1.5);
        \coordinate (A) at (90:1);
        \coordinate (B) at (210:1);
        \coordinate (C) at (330:1);
        \filldraw[black!20] (A) -- (B) -- (C) -- cycle;
        \draw[ultra thick] (A) -- (B) -- (C) -- cycle;
        \foreach \x in {A,B,C}
        {
          \filldraw (\x) circle (2pt);
        }
        \draw (0,0) node {$c_0$};
        \draw (A) node[anchor=270] {$u_0$};
        \draw (C) node[anchor=150] {$u_1$};
        \draw (B) node[anchor=30] {$u_2$};
        \draw[white] (0,1.75) node[anchor=south] {$u_0$};
        \draw[white] (0,-1.25) node[anchor=north] {$u_1$};
        \draw (0,-2.25) node {(iii)};
      \end{tikzpicture}
      \hfill{}
    \caption{The vertices $u_0,\dotsc,u_{am-1}$ for the north pole $c_0$
      being (i)~a vertex, (ii)~an edge, (iii)~a face. Note that in
      Case~(i), we can either have $m=4$, $a=1$ or $m=a=2$.}
    \label{fig:spindle:start}
  \end{figure}
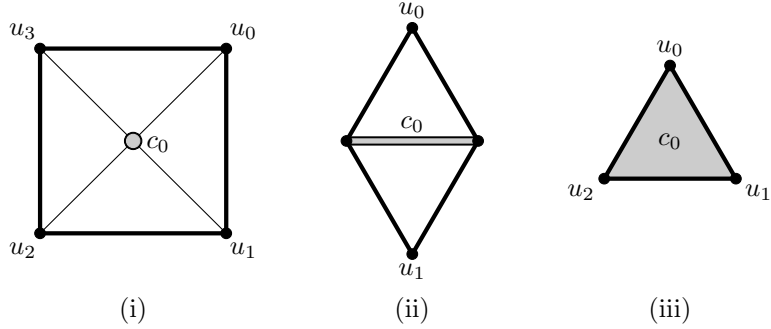

  For each $j=0,\dotsc,am-1$, we define the vertices
  $u_j^0:=u_j,u_j^1,\dotsc,u_j^k$, $v_j^0,\dotsc,v_j^{k-1}$, and
  $w_j^1,\dotsc,w_j^k$ as follows (see
  Figure~\ref{fig:spindle:sourcestargetsbases}): recursively for
  $0\le i\le k-1$
  \begin{enumerate}
  \item\label{spindle:source}
    let $v_j^i$ be the first vertex starting from $u_j^i$ along the
    cycle $C_i$ in clockwise direction around the north pole that has an
    edge to $F_{i+1}$;
  \item\label{spindle:target}
    let $v_j^iw_j^{i+1}$ be the leftmost neighbour of $v_j^i$ in
    $F_{i+1}$; and
  \item\label{spindle:base}
    let $u_j^{i+1}$ be the base of the branch of $F_{i+1}$ that contains
    $w_j^{i+1}$.
  \end{enumerate}
  The vertices $u_j^0,\dotsc,u_j^k$, $v_j^0,\dotsc,v_j^{k-1}$, and
  $w_j^1,\dotsc,w_j^k$ are uniquely defined by
  \ref{spindle:source}-\ref{spindle:base}. We have $\varphi(u_j^i) =
  u_{j+a}^i$, $\varphi(v_j^i) = v_{j+a}^i$, and $\varphi(w_j^i) =
  w_{j+a}^i$ for all $i,j$ by the symmetry of $T$ and the fact that
  $\varphi(u_j^0) = u_{j+a}^0$. Note that
  in~\ref{spindle:source}, we encounter $v_j^i$ before we reach
  $u_{j+a}^i$: indeed, if the subpath of $C_i$ from $u_j^i$ to
  $u_{j+a}^i$ contains no vertex that has a neighbour in $F_{i+1}$,
  then by the fact that $\varphi(u_j^i)=u_{j+a}^i$, no vertex of $C_i$
  has a neighbour in $F_{i+1}$, a contradiction to the definition of
  $F_{i+1}$.
  
  \begin{figure}[htbp]
    \centering
    \begin{tikzpicture}
      \coordinate (UI) at (100:10);
      \coordinate (UIa) at (75:10);
      \coordinate (VI) at (90:10);
      \coordinate (WIp) at (95:11.5);
      \coordinate (UIp) at (95:14);
      
      \coordinate (AI) at (107:9);
      \coordinate (BI) at (103:9);
      \coordinate (CI) at (82:9);
      \coordinate (DI) at (78:9);
      
      \coordinate (AIp) at (98:12.75);
      \coordinate (BIp) at (94:12.75);
      \coordinate (CIp) at (90:12.75);
      \coordinate (DIp) at (91:11.5);
  
      \draw[black!30,line width=5pt] (VI) -- (WIp);
      
      \draw[ultra thick] (72:10) arc (72:108:10);
      \draw[ultra thick] (72:14) arc (72:108:14);
      \draw[ultra thick] (AI) -- (BI) -- (105:10) -- cycle;
      \draw[ultra thick] (CI) -- (DI) -- (80:10) -- cycle;
      \draw[ultra thick] (AIp) -- (UIp);
      \draw[ultra thick] (BIp) -- (CIp) -- (UIp) -- cycle;
      \draw[ultra thick] (WIp) -- (DIp) -- (BIp) -- cycle;
  
      \draw (VI) to[out=150,in=40] (UI);    
      \draw (VI) to[out=135,in=40] (105:10);    
      \draw[thick] (VI) -- (WIp);
      \draw (VI) -- (DIp);
      \draw (VI) to[out=70,in=330] (BIp);
      
      \filldraw (AI) circle (2pt);
      \filldraw (BI) circle (2pt);
      \filldraw (CI) circle (2pt);
      \filldraw (DI) circle (2pt);
      \filldraw (AIp) circle (2pt);
      \filldraw (BIp) circle (2pt);
      \filldraw (CIp) circle (2pt);
      \filldraw (DIp) circle (2pt);
      \foreach \x in {75,80,...,105}
      {
        \filldraw (\x:10) circle (2pt);
        \filldraw (\x:14) circle (2pt);
      }
      \filldraw (UI) circle (3.5pt);
      \filldraw[black!30] (UI) circle (2.5pt);
      \filldraw (VI) circle (3.5pt);
      \filldraw[black!30] (VI) circle (2.5pt);
      \filldraw (WIp) circle (3.5pt);
      \filldraw[black!30] (WIp) circle (2.5pt);
      \filldraw (UIp) circle (3.5pt);
      \filldraw[black!30] (UIp) circle (2.5pt);
      
      \draw (110:10) node {\large $\mathbf{F_i}$};
      \draw (110:14) node {\large $\mathbf{F_{i+1}}$};
      \draw (UI) node[anchor=north] {$u_j^i$};
      \draw (UIa) node[anchor=north] {$u_{j+a}^i$};
      \draw (VI) node[anchor=north] {$v_j^i$};
      \draw (WIp) node[anchor=east] {$w_j^{i+1}$};
      \draw (UIp) node[anchor=south] {$u_j^{i+1}$};
      \draw (95:10) node[anchor=north] {$x$};
    \end{tikzpicture}
    \caption{Constructing the sources $v_j^i$, targets $w_j^{i+1}$, and
      bases $u_j^i$. Note that if $x$ is a base, say $x=u_{j'}^i$, then
      the construction yields $v_j^i=v_{j'}^i$.}
    \label{fig:spindle:sourcestargetsbases}
  \end{figure}
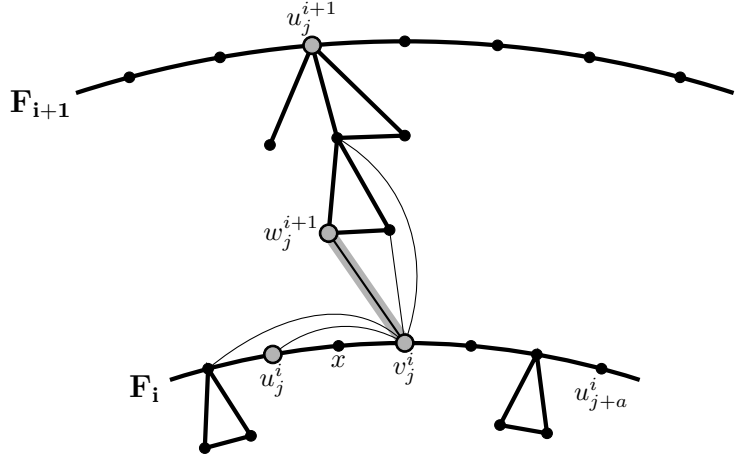

  The edges
  $v_j^iw_j^{i+1}$ are called \emph{\liaisonp}. For every
  \liaisons, we call $v_j^i$ its \emph{source} and $w_j^{i+1}$
  its \emph{target}. Note that sources, targets, and
  bases do not have to be distinct. Clearly, two targets that lie in
  the same branch will always result in the same base, but also two
  bases will result in the same source if there is no eligible choice
  for a source between them on the cycle, and two sources may result
  in the same target if their leftmost edges lead to the same vertex.
  It is important to note that the sources $v_j^i,v_{j+a}^i,\dotsc,
  v_{j+a(m-1)}^i$ are always distinct since they form an orbit under
  $\varphi$ by the symmetry of the construction. The same holds for
  targets and bases up to the ($k-1$)-st level.
\end{construction}

With the levels $F_0,\dotsc,F_k$ and the \liaisonp\ $v_j^iw_j^{i+1}$,
we are now able to define our refined spindles, called \emph{fyke
nets}.

\begin{definition}[Fyke net]\label{def:extendedspindle}
  Let $\tilde F$ be the union of
  \begin{itemize}
  \item the levels $F_0,\dotsc,F_k$ of $T$,
  \item all \liaisonp\ $v_j^iw_j^{i+1}$,
  \item the north pole $c_0$ of $T$,
  \item all edges from $c_0$ to $F_0$ (if $c_0$ is a vertex), and
  \item the south pole $c_1$ and its boundary (if the last level $F_k$
    is pseudo-antarctic).
  \end{itemize}
  The \emph{fyke net} of $T$ with respect to the group
  $H\subseteq\Aut(c_0,T)$ is the maximal $2$-connected subgraph $F$
  of $\tilde F$ that contains both poles $c_0,c_1$.

  \begin{figure}[htbp]
    \centering
    \begin{tikzpicture}[scale=.75]
      \clip (-4.1,-4) rectangle (3.7,3.8);
      \node (myfirstpic) at (0,0) {\includegraphics[height=6cm]{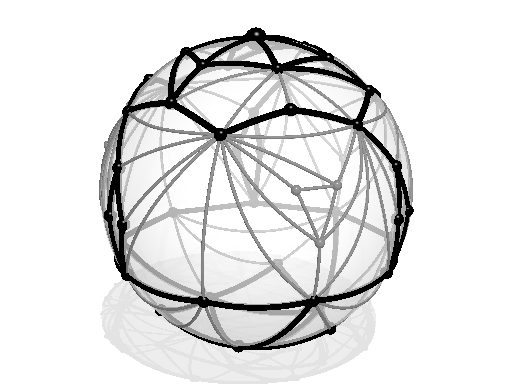}};
      \draw (0,3.6) node {\large $c_0$};
      \draw (-.05,-2.85) node {\large $c_1$};
    \end{tikzpicture}
    \caption{A triangulation its fyke net (bold) with respect to the
      group $H=\Aut(c_0,T)$, in which
      the north pole $c_0$ is a vertex and the south pole $c_1$ is a
      face.}
    \label{fig:fykenet}
  \end{figure}

  The intersection of the fyke net with the level $F_i$ is
  its \emph{$i$th layer} and denoted by $H_i$. Note that every layer
  is a plane symmetric cactus of order $m$ by the symmetry of the
  construction. The fyke net has up to five different types of faces:
  \begin{enumerate}
  \item faces at the north pole $c_0$: either $c_0$ itself (if it is
    a face) or all faces of $T$ that are incident with $c_0$;
  \item faces that are bounded by cycles in a branch of a layer; we
    call such faces \emph{leaves};
  \item faces bounded by two consecutive \liaisonp\ and two
    subpaths of the two layers connecting their sources and their
    targets; we call such faces \emph{segments};
  \item if the last layer $H_k$ is pseudo-antarctic, $m$ faces that
    are bounded by a subpath of the centre of $H_k$ and the south pole
    $c_1$ (if it is an edge) or one of its incident edges (if it is a
    face); we call such faces \emph{pseudo-antarctic};
  \item the south pole $c_1$ (if it is a face).
  \end{enumerate}
\end{definition}

The following properties of the fyke net are easy to show, using
the 2-connectedness of the fyke net and the structure of the
levels of $T$.

\begin{proposition}\label{prop:spindle}
  Let $F$ be the fyke net of a triangulation $T$. Then every segment
  of $F$ is bounded by a cycle. An edge $e$ of $T$ lies in $F$ if and
  only if
  \begin{enumerate}
  \item $e$ is a \liaisons,
  \item $e$ lies in the centre of a level $F_i$, or
  \item $e$ and a target $w_j^i$ are contained in the same branch $B$
    of a level $F_i$ and $e$ lies on a path from $w_j^i$ to $u_j^i$ in
    $B$. Equivalently, the block $B(e)$ of $B$ containing $e$ and the
    smallest block (in the tree order on the block graph induced by
    choosing the base $u_j^i$ of $B$ as its root) $B(w_j^i)$
    containing $w_j^i$ satisfy $B(w_j^i)\ge B(e)$ (in said tree order).
  \end{enumerate}
\end{proposition}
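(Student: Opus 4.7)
The plan is to split the proposition into two tasks: (I) identify the edge set of the maximal 2-connected subgraph $F$ with the set described by (i)--(iii); (II) deduce from the resulting 2-connectedness that every segment is bounded by a cycle. Task~(II) is essentially for free, since in any 2-connected plane graph every face boundary is a simple cycle, and the definition of a segment already exhibits its bounding walk as a closed walk along two liaison edges and two subpaths of consecutive layers. So almost all of the work lies in task~(I), which I would further split into a containment and a maximality step.

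For the containment step, I would construct a concrete candidate subgraph $F'\subseteq\tilde F$ consisting of the north pole (and, if $c_0$ is a vertex, the edges from $c_0$ to $F_0$), all centres $C_0,\dots,C_k$, all liaison edges $v_j^iw_j^{i+1}$, in the pseudo-antarctic case also $c_1$ and its boundary, and, inside each branch $B$ of each layer with base $u_j^i$, every edge $e$ whose block $B(e)$ satisfies $B(e)\le B(w_j^i)$ in the block-tree order for some target $w_j^i$ in $B$. I would then show $F'$ is 2-connected by induction on the level $i$, maintaining as an inductive hypothesis that after incorporating everything up to and including level $i$, every vertex is in two internally disjoint paths to the north pole. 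The base case uses that $C_0$ is an invariant cycle containing all $u_j^0$, together (if $c_0$ is a vertex) with the star at $c_0$ that turns $C_0$ into a wheel-like 2-connected structure. The inductive step glues on $C_{i+1}$: the $m$ liaison edges landing on $H_{i+1}$ are connected via target-to-base paths through the blocks $B(e)\le B(w_j^{i+1})$ to the invariant centre $C_{i+1}$, and $C_{i+1}$ is an invariant cycle (or the pseudo-antarctic/antarctic closure using $c_1$), so every newly added vertex lies on a cycle through the previous structure, preserving 2-connectedness.

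For the maximality step, I would argue contrapositively: suppose some edge $e^*\in\tilde F\setminus F'$ appears in a 2-connected subgraph $F''$ of $\tilde F$ with $\{c_0,c_1\}\subseteq V(F'')$. By construction $e^*$ must lie in some branch $B$ with base $u_j^i$, in a block $B(e^*)$ that is not $\le B(w)$ for any target $w$ in $B$. Let $x$ be the cut vertex of the cactus $B$ at which $B(e^*)$ (or the subtree of $B$ containing $B(e^*)$) attaches to the part of $B$ already in $F'$. Every edge of $\tilde F$ incident to $B(e^*)$ is either inside $B$, or is a liaison edge whose target lies in $B(e^*)$'s subtree; by the leftmost-edge construction and the symmetry of the target orbit, there is no such liaison edge, because no target was chosen inside $B(e^*)$. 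Hence every path in $F''$ from $e^*$ to the rest of $F''$ passes through $x$, making $x$ a cut vertex of $F''$ and contradicting 2-connectedness. This expected bookkeeping---in particular verifying that the leftmost-edge rule really does prevent stray liaison edges landing in off-path blocks---is the main technical obstacle, and is where I expect most of the care to be needed; once it is dealt with, $F=F'$, the edge characterisation (i)--(iii) is immediate, and every segment of the now-proved 2-connected $F$ is bounded by a simple cycle, completing the proof.
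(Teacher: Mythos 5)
The paper offers no proof of this proposition: it is introduced with the remark that it is ``easy to show, using the 2-connectedness of the fyke net and the structure of the levels of $T$,'' and no argument follows. Your proposal therefore supplies a proof where the paper supplies only an assertion, and the overall strategy is the natural one: build the candidate subgraph $F'$, show it is $2$-connected (giving $F'\subseteq F$, since $F$ is the maximal $2$-connected subgraph through both poles), and show any edge of $\tilde F\setminus F'$ sits behind a cut vertex of $\tilde F$ (giving $F\subseteq F'$); once $F=F'$ is established, the face-boundary claim is immediate because $F$ is $2$-connected. This structure is sound.

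Two remarks on points you should tighten. First, your $F'$ correctly includes the star from $c_0$ to $F_0$ (when $c_0$ is a vertex) and the boundary of $c_1$ (in the pseudo-antarctic case). These edges are in $\tilde F$ by Definition~\ref{def:extendedspindle} and must lie in $F$, yet they are not covered by (i)--(iii) of the proposition as written; you have in effect fixed a small imprecision in the statement, which is fine, but it is worth flagging so the reader sees the characterisation you actually prove. Second, in the maximality step your justification that ``no target was chosen inside $B(e^*)$'s subtree'' is attributed to the leftmost-edge rule and target-orbit symmetry, which is the wrong mechanism: the correct reason is purely definitional --- if some target $w$ had $B(w)\ge B(e^*)$ then $e^*$ would satisfy condition (iii) and hence lie in $F'$, contradicting $e^*\notin F'$. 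One still needs to check that no other edge of $\tilde F$ reaches strictly inside that subtree (liaison edges originate on centres $C_i$, never on branches, and consecutive levels are disjoint), but that is routine. Finally, the inductive step for $2$-connectedness does need care in the antarctic case where $C_k$ degenerates to a single vertex or edge: there $c_1$ is joined to the previous layer only through the $m\ge2$ spines and their liaison edges, and one must argue (via the free action of $\varphi$ on the edges at $c_1$ and the connectivity of the preceding levels) that $c_1$ is not a cut vertex of $F'$. You acknowledge this bookkeeping is outstanding; it is the one place where the proof is not yet complete, but I see no obstruction to carrying it out.
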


Note that the orbits of $\varphi$ partition the sets of leaves and segments
into sets of size $m$. In particular, the near-triangulations that
have to be inserted into the leaves and segments in order to re-obtain
(i.e.\ construct) $T$ are isomorphic if the corresponding faces of $F$ are in
the same orbit.

Unlike spindles, the fyke net is \emph{unique} and thus, we can
obtain all triangulations with rotative symmetry by first choosing
a fyke net and then the near-triangulations that are to be
pasted into the leaves and segments.

As in the case of reflective symmetries, we have to be more specific
on which near-triangulations we are allowed to paste into leaves and
segments.

\begin{lemma}\label{lem:spindle:segments}
  Let $f$ be a segment of the fyke net of $T$. Then there exist
  \emph{unique} indices $i,j$ satisfying the following properties
  (see Figure~\ref{fig:spindle:segment}).
  \begin{enumerate}
  \item\label{spindle:segment:boundary}
    The boundary $C$ of $f$ consists of two \liaisonp\
    $v_j^iw_j^{i+1}$, $v_{j+1}^iw_{j+1}^{i+1}$ and paths $P_i$,
    $P_{i+1}$, where $P_i$ is a path in the centre of the layer $H_i$ from
    $v_j^i$ to $v_{j+1}^i$ and $P_{i+1}$ is a path in the layer $H_{i+1}$ from
    $w_j^{i+1}$ to $w_{j+1}^{i+1}$.
  \item\label{spindle:segment:paths}
    The path $P_i$ has at least one edge and runs along the centre of
    $H_i$ in clockwise direction around the north pole.
  \item\label{spindle:segment:base}
    The base $u_{j+1}^i$ is a vertex on $P_i\setminus\{v_j^i\}$.
    \setcounter{brokenenumi}{\value{enumi}}
  \end{enumerate}
  We write $f_j^i=f$ and denote by $N_j^i$ the near-triangulation that
  $(T,f_j^i)$ induces at the vertex $v_j^i$ and the edge
  $v_j^iw_j^{i+1}$. The near-triangulation $N_j^i$ has the following
  properties.
  \begin{enumerate}\setcounter{enumi}{\value{brokenenumi}}
  \item\label{spindle:segment:rightmost}
    The edge $v_{j+1}^iw_{j+1}^{i+1}$ is part of the boundary of a face
    of $N_j^i$ whose third vertex $x$ lies in the subpath of $P_i$ from
    $v_j^i$ to the predecessor of $u_{j+1}^i$.
  \item\label{spindle:segment:faces}
    Every edge of $P_{i+1}$ is part of the boundary of a face of
    $N_j^i$ whose third vertex is in $P_i$.
  \item\label{spindle:segment:nochords}
    No two vertices in $P_{i+1}$ are connected by a chord in $N_j^i$.
  \item\label{spindle:segment:order2}
    If $m=2$ and $\varphi(v_j^i)=v_{j+1}^i$, then there is no edge in
    $N_j^i$ from $v_j^i$ to $v_{j+1}^i$.
  \end{enumerate}
\end{lemma}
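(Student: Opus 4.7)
My plan is to follow the construction of the fyke net carefully and verify each claim by direct geometric and combinatorial inspection. For parts (i)--(iii), I would first use the $2$-connectedness of the fyke net (Proposition~\ref{prop:spindle}) to conclude that $f$ is bounded by a cycle $C$, and the characterization of fyke net edges in the same proposition to show that $C$ contains exactly two \liaisonp\ --- say $v_j^iw_j^{i+1}$ and $v_{j+1}^iw_{j+1}^{i+1}$, indexed so that they are consecutive in the clockwise direction around $c_0$ --- together with arcs $P_i\subseteq H_i$ and $P_{i+1}\subseteq H_{i+1}$. The arc $P_i$ must lie in the centre $C_i$ because a branch of $H_i$ incident to $f$ on its outer side would have produced a leaf face between $f$ and $H_i$, not the segment $f$ itself. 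Uniqueness of $i,j$ follows because the pair of consecutive \liaisonp\ is unique to $f$. For (ii), distinct \liaisonp\ have distinct sources (the target is determined by the source via the leftmost-edge rule of Construction~\ref{constr:liaison}), so $v_j^i\ne v_{j+1}^i$ and $P_i$ has at least one edge. For (iii), Construction~\ref{constr:liaison} defines $v_{j+1}^i$ as the vertex one reaches by walking clockwise on $C_i$ from $u_{j+1}^i$ until one finds a vertex having a neighbor in $F_{i+1}$; this walk is precisely the terminal segment of $P_i$, placing $u_{j+1}^i$ on $P_i$, and $u_{j+1}^i\ne v_j^i$ since otherwise the $(j{+}1)$st \liaisons\ would coincide with the $j$th.

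For (iv), let $x$ be the third vertex of the $T$-face incident to $v_{j+1}^iw_{j+1}^{i+1}$ on the $f$-side. Then $v_{j+1}^ix$ is the edge immediately counter-clockwise from $v_{j+1}^iw_{j+1}^{i+1}$ at $v_{j+1}^i$, so by the leftmost-neighbor rule $x\notin F_{i+1}$, although $x$ does have $w_{j+1}^{i+1}\in F_{i+1}$ as a neighbor. Once (v) is established, $x$ must lie on $P_i$, and if $x$ equalled $u_{j+1}^i$ or lay strictly between $u_{j+1}^i$ and $v_{j+1}^i$ on $P_i$, then by the defining property of $v_{j+1}^i$ (as the first vertex reached along the walk having a neighbor in $F_{i+1}$) we would have $v_{j+1}^i=x$, contradicting $x\ne v_{j+1}^i$. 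Hence $x$ precedes $u_{j+1}^i$ on $P_i$.

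The main obstacle is establishing (v) and (vi), both of which rest on a careful description of which edges of $T$ can appear inside the segment $f$ between two consecutive layers. The guiding idea is that $F_{i+1}$ is obtained from $F_i$ via Lemma~\ref{lem:cactus} as the outermost cactus component --- inside the $c_1$-side face of $C_i$ --- formed by the boundaries of $T$-faces touching $C_i$. For (v), each edge $e$ of $P_{i+1}\subseteq F_{i+1}$ then bounds, on the $f$-side, a triangular $T$-face whose third vertex is forced by the construction of $F_{i+1}$ to lie on $C_i$; planarity and the location of $f$ place that vertex specifically on $P_i$. For (vi), a chord of $N_j^i$ joining two vertices of $P_{i+1}$ would, together with a suitable arc of $H_{i+1}$, enclose a region of the sphere disjoint from $C_i$, and the $T$-faces in this region would not be recruited in the construction of $F_{i+1}$ --- contradicting the fact that $F_{i+1}$ arises as a whole cactus component in Lemma~\ref{lem:cactus}. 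Finally, (vii) is an invariance argument: if $m=2$ and $\varphi(v_j^i)=v_{j+1}^i$, any edge between the two vertices would be $\varphi$-invariant; but by the corollary of Lemma~\ref{lem:spindle} the only $\varphi$-invariant cells are $c_0$ and $c_1$, and since $i<k$ no edge on $F_i$ can be either.
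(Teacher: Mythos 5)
Your treatment of parts~\ref{spindle:segment:boundary}--\ref{spindle:segment:base} and~\ref{spindle:segment:order2} is sound and essentially matches the paper (the paper disposes of these even more tersely, as consequences of the definitions and the simplicity of $T$). The weak point of the proposal is part~\ref{spindle:segment:faces}, which is the technical heart of the lemma and where the paper invests most of its effort; your argument for it has a genuine gap, and your arguments for parts~\ref{spindle:segment:rightmost} and~\ref{spindle:segment:nochords} lean on~\ref{spindle:segment:faces} in a way that does not quite hold up.

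For~\ref{spindle:segment:faces}, you write that each edge $e$ of $P_{i+1}$ ``bounds, on the $f$-side, a triangular $T$-face whose third vertex is forced by the construction of $F_{i+1}$ to lie on $C_i$.'' That is exactly what must be proved, not a consequence of Lemma~\ref{lem:cactus}. The definition of $F_{i+1}$ only guarantees that $e$ lies on the boundary of \emph{some} face $f'\in\mathcal{F}$ with its third vertex on $C_i$; it does not say $f'$ is the face on the $f$-side of $e$. Both sides of $e$ lie in the $c_1$-side face of $C_i$, so there is no a priori reason the ``touching'' face is the one inside the segment---in particular when $e$ is a cut edge of the cactus $F_{i+1}$ (a branch edge) the two sides of $e$ are not separated by a cycle of $F_{i+1}$, and the easy ``which side of the cycle are you on'' argument does not apply. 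The paper closes this gap with a nontrivial iterative argument: it picks $x_1$ to be the last neighbour of $w_j^{i+1}$ on $P_i$, splits $N_j^i$ along $x_1w_j^{i+1}$, examines the third vertex $y_1$ of the face on the $P_{i+1}$-side of $x_1w_j^{i+1}$, and shows that if $y_1$ is not the $P_{i+1}$-neighbour of $w_j^{i+1}$ then one can produce an infinite sequence of distinct internal vertices $y_1,y_2,\dotsc$, a contradiction. Something of this nature---or an equally explicit substitute---is needed; asserting the conclusion via ``planarity and the location of $f$'' is not enough.

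Two smaller points. In~\ref{spindle:segment:rightmost}, you invoke~\ref{spindle:segment:faces} to place $x$ on $P_i$, but~\ref{spindle:segment:faces} is a statement about edges of $P_{i+1}$, whereas $v_{j+1}^iw_{j+1}^{i+1}$ is a liaison edge; the paper instead argues directly that $x$ cannot be on $P_{i+1}$ (leftmost-neighbour rule), cannot be internal (else $x$ would lie in $F_{i+1}$, again contradicting the leftmost-neighbour rule), and cannot lie on the subpath of $P_i$ from $u_{j+1}^i$ to $v_{j+1}^i$ (by the definition of the source $v_{j+1}^i$). For~\ref{spindle:segment:nochords}, your separate geometric argument (``the $T$-faces in this region would not be recruited in the construction of $F_{i+1}$'') is not clearly correct, since faces inside the putative chord-region need not bear on whether edges of $P_{i+1}$ belong to $F_{i+1}$---those edges could in principle be certified by faces on the $c_1$-side. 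The paper's route is cleaner and worth adopting: once~\ref{spindle:segment:faces} is established,~\ref{spindle:segment:nochords} follows at once, since a chord between two vertices of $P_{i+1}$ would cut off a region of $f$ disjoint from $P_i$, yet every edge of $P_{i+1}$ inside that region must see a face with third vertex on $P_i$.
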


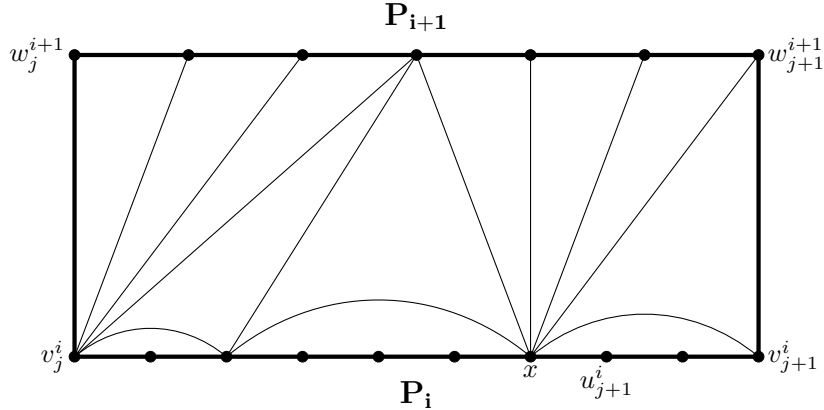
\begin{figure}[htbp]
  \centering
  \begin{tikzpicture}
    \draw[ultra thick] (0,0) rectangle (9,4);
    
    \foreach \x in {0,...,9}
      \filldraw (\x,0) circle (2pt);
    \foreach \x in {0,1.5,...,9}
      \filldraw (\x,4) circle (2pt);
      
    \draw (9,4) -- (6,0) to[out=40,in=140] (9,0);
    \draw (7.5,4) -- (6,0);
    \draw (6,4) -- (6,0);
    \draw (4.5,4) -- (6,0) to[out=140,in=40] (2,0);
    \draw (4.5,4) -- (2,0) to[out=140,in=40] (0,0);
    \draw (4.5,4) -- (0,0);
    \draw (3,4) -- (0,0);
    \draw (1.5,4) -- (0,0);
      
    \draw (0,0) node[anchor=0] {$v_j^i$};
    \draw (9,0) node[anchor=180] {$v_{j+1}^i$};
    \draw (0,4) node[anchor=0] {$w_j^{i+1}$};
    \draw (9,4) node[anchor=180] {$w_{j+1}^{i+1}$};
    \draw (6,0) node[anchor=90] {$x$};
    \draw (7,0) node[anchor=90] {$u_{j+1}^i$};
    \draw (4.5,-.5) node {\large $\mathbf{P_i}$};
    \draw (4.5,4.5) node {\large $\mathbf{P_{i+1}}$};
  \end{tikzpicture}
  \caption{The structure of the near-triangulation in a segment of
    the fyke net.}
  \label{fig:spindle:segment}
\end{figure}

\begin{proof}
  Property~\ref{spindle:segment:boundary} is part of the definition
  of a segment and~\ref{spindle:segment:paths} is immediate by the
  definition of the sources and targets. Property~\ref{spindle:segment:base} is clear by the way the source
  $v_{j+1}^i$ has been chosen.

 Property~\ref{spindle:segment:rightmost}
  follows from the existence of a face having the edge
  $v_{j+1}^iw_{j+1}^{i+1}$ on its boundary and the fact that its
  third vertex $x$ cannot be
  \begin{itemize}
  \item a vertex in $P_{i+1}$, since this would contradict the
    choice of $w_{j+1}^{i+1}$ as the leftmost neighbour of
    $v_{j+1}^i$;
  \item an internal vertex of $N_j^i$, since then $x$ would have been in
    the $(i+1)$-st level of $T$, again contradicting the choice of
    $w_{j+1}^{i+1}$;
  \item a vertex on the subpath of $P_i$ from $u_{j+1}^i$ to
    $v_{j+1}^i$, since by the choice of $v_{j+1}^i$ no vertex on this
    path has a neighbour in the $(i+1)$-st level of $T$.
  \end{itemize}

  In order to prove~\ref{spindle:segment:faces}, let $e$ be an edge
  of $P_{i+1}$. It is part of the boundary of a unique face of $N_j^i$ and
  by the definition of $F_{i+1}$ it is also part of the boundary of a
  face of $T$ whose third vertex is in $F_i$. We will show that this
  latter face is also a face of $N_j^i$, thus
  showing~\ref{spindle:segment:faces}.

  We will prove this for the edges in $P_{i+1}$ one by one, starting
  from the edge at $w_j^{i+1}$. Let $x_1$ be the last neighbour of
  $w_j^{i+1}$ on $P_i$ (starting from $v_j^i$). The edge
  $x_1w_j^{i+1}$ divides $N_j^i$ into two parts, let $N_1$ be the part
  which contains all of $P_{i+1}$. The edge $x_1w_j^{i+1}$ is part of
  the boundary of a unique face of $N_1$, denote the third vertex of
  this face by $y_1$ (see Figure~\ref{fig:spindle:faces}). If $y_1$ is
  the neighbour of $w_j^{i+1}$ on $P_{i+1}$, then we have found the
  desired face. Otherwise, it cannot be a vertex of $P_{i+1}$ since
  the edge $w_j^{i+1}y_1$ is in $F_{i+1}$ and would thus also have
  been in $H_{i+1}$. Since $x_1$ was the last neighbour of $w_j^{i+1}$
  on $P_i$, $y_1$ has to be an internal vertex of $N_j^i$. Now repeat the
  construction with $y_1$ instead of $w_j^{i+1}$ to obtain a vertex
  $x_2$ on $P_i$ (possibly $x_2=x_1$), a near-triangulation
  $N_2\subseteq N_1$ and a vertex $y_2$. As before, $y_2$ cannot lie
  on $P_i$ by the definition of $x_2$ and not in
  $P_{i+1}\setminus\{w_j^{i+1}\}$ by the definition of $H_{i+1}$. It
  also cannot be $w_j^{i+1}$, since then the edge $x_2w_j^{i+1}$ would
  either contradict the choice of $x_1$ as the last neighbour of
  $w_j^{i+1}$ on $P_i$ (if $x_1\not=x_2$) or it would yield a double
  edge (if $x_1=x_2$), also a contradiction. We can thus continue the
  construction and will always obtain internal vertices of $N_j^i$ for
  $y_1,y_2,\dotsc$. Since these vertices are distinct and $N_j^i$ is
  finite, this is a contradiction, implying that $y_1$ must have been
  the neighbour of $w_j^{i+1}$ on $P_{i+1}$.

\begin{figure}[htbp]
  \centering
  \begin{tikzpicture}
    \draw[ultra thick] (0,0) rectangle (9,4);
    
    \foreach \x in {0,...,9}
      \filldraw (\x,0) circle (2pt);
    \foreach \x in {0,1.5,...,9}
      \filldraw (\x,4) circle (2pt);
    \filldraw (4,2) circle (1.5pt);
    \filldraw (5,2) circle (1.5pt);
    \filldraw (6,2) circle (1.5pt);
      
    \draw (0,4) -- (3,0) to[out=140,in=40] (0,0);
    \draw (0,4) -- (4,2) -- (3,0);
    \draw (4,2) -- (6,2) -- (5,0) -- cycle;
    \draw (5,2) -- (5,0) to[out=150,in=30] (3,0);
      
    \draw (0,0) node[anchor=0] {$v_j^i$};
    \draw (9,0) node[anchor=180] {$v_{j+1}^i$};
    \draw (0,4) node[anchor=0] {$w_j^{i+1}$};
    \draw (9,4) node[anchor=180] {$w_{j+1}^{i+1}$};
    \draw (3,0) node[anchor=90] {$x_1$};
    \draw (5,0) node[anchor=90] {$x_2=x_3$};
    \draw (4,2) node[anchor=270] {$y_1$};
    \draw (5,2) node[anchor=270] {$y_2$};
    \draw (6,2) node[anchor=270] {$y_3$};
    \draw (4.5,-.75) node {\large $\mathbf{P_i}$};
    \draw (4.5,4.5) node {\large $\mathbf{P_{i+1}}$};
  \end{tikzpicture}
  \caption{The construction proving Lemma~\ref{lem:spindle:segments}\ref{spindle:segment:faces}.
    Note that the vertices $x_1,x_2,\dotsc$ are not necessarily
    distinct. The vertices $y_1,y_2,\dotsc$, however, are mutually
    distinct.}
  \label{fig:spindle:faces}
\end{figure}
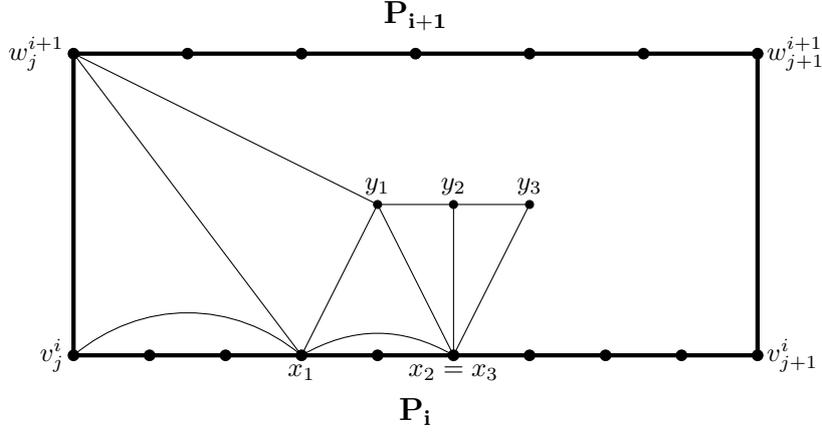

  The same construction for every later edge of $P_{i+1}$
  proves~\ref{spindle:segment:faces}.
  Property~\ref{spindle:segment:nochords} follows immediately
  from~\ref{spindle:segment:faces}.

  Finally, note that~\ref{spindle:segment:order2} is immediate since
  otherwise there would be a double edge in $T$ between $v_j^i$ and
  $v_{j+1}^i$.
\end{proof}

The near-triangulations pasted into leaves or pseudo-antarctic faces, however, do not have any
restrictions. Indeed, chords do neither contradict the construction
of the layers by Lemma~\ref{lem:cactus} nor can they result in double
edges.

A triangulation with rotative symmetry can thus be constructed by
first choosing a fyke net, then choosing, for every
isomorphism class of leaves or pseudo-antarctic faces, any near-triangulation to be pasted into
each of these leaves, and finally, for every isomorphism class of
segments, choosing a near-triangulation with
properties~\ref{spindle:segment:rightmost} and~\ref{spindle:segment:faces}
(and~\ref{spindle:segment:order2}, in the case of $m=2$) above (recall
that \ref{spindle:segment:nochords} follows immediately). More details
about this construction will be given in Section~\ref{sec:constr:rotative}.

\section{Reflective and rotative symmetries}\label{sec:both}

In this section, we assume that $\Aut(c_0,T)$ has a subgroup $H$ that contains both
reflective and rotative automorphisms. By Theorem~\ref{thm:aut},
$H$ is isomorphic to $D_n$ where $n\ge2$ is a divisor of
$d(c_0)$, i.e., there are $n$ reflections and $n-1$ rotations (and
the identity).

Since the rotations and the identity form a cyclic group, the results
of Section~\ref{sec:rotative} can be applied. In particular, there is
a unique cell $c_1\not= c_0$ that is invariant under all rotations.
Again, we call $c_0$ the north pole and $c_1$ the south pole of $T$.
For each reflection $\varphi$, there is a girdle $G_{\varphi}$ by the
results of Section~\ref{sec:reflective}.

Clearly, no two girdles are the same by Corollary~\ref{cor:incident}
and every girdle contains the north pole $c_0$ by definition. Thus, there
are $2n$ cells incident with $c_0$ that are invariant under some
reflection; denote them by $a_0,\dotsc,a_{2n-1}$, enumerated in the
same order they lie around $c_0$ (in clockwise direction, say). Then
for every reflection, there is an $i\in\{0,\dotsc,n-1\}$ such that the
invariant cells incident with $c_0$ are $a_i$ and $a_{n+i}$; denote
this automorphism by $\varphi_i$ and its girdle by $G_i$.

\begin{lemma}\label{lem:girdlesmeet}
  The girdles $G_0,\dotsc,G_{n-1}$ have the following properties.
  \begin{enumerate}
  \item\label{girdles:south}
    North and south pole are central cells of every girdle.
  \item\label{girdles:central}
    The two poles are the only cells that are central cells of more
    than one girdle.
  \end{enumerate}
\end{lemma}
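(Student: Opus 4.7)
The plan is to reduce both statements to the invariance characterisation of central cells provided by Lemma~\ref{lem:girdle}\ref{girdle:invariant}: a cell is central in $G_i$ if and only if it is invariant under $\varphi_i$. Thus \ref{girdles:south} amounts to showing that both $c_0$ and $c_1$ are fixed by every reflection $\varphi_i$, and \ref{girdles:central} amounts to showing that no other cell is fixed by two distinct reflections.

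For \ref{girdles:south}, invariance of $c_0$ is immediate since every $\varphi_i \in \Aut(c_0,T)$. For the south pole, the key observation is that $c_1$ is the unique cell distinct from $c_0$ that is fixed by every non-trivial rotation in $H$ (this is the corollary to Lemma~\ref{lem:spindle}). Now, for an arbitrary rotation $\rho \in H$, the element $\varphi_i \rho \varphi_i^{-1}$ is again a rotation in $H$ (in the dihedral group $D_n$, conjugation of a rotation by a reflection gives a rotation, namely $\rho^{-1}$). From $\rho(c_1)=c_1$ we get
\[
\rho^{-1}\bigl(\varphi_i(c_1)\bigr) = \varphi_i\bigl(\rho(c_1)\bigr) \cdot \bigl[\varphi_i^{-1}\varphi_i\bigr] = \varphi_i(c_1),
\]
so $\varphi_i(c_1)$ is fixed by every rotation in $H$. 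Hence $\varphi_i(c_1) \in \{c_0,c_1\}$, and since $\varphi_i$ is a bijection fixing $c_0$ and $c_1 \neq c_0$, we must have $\varphi_i(c_1)=c_1$. Applying Lemma~\ref{lem:girdle}\ref{girdle:invariant} finishes \ref{girdles:south}.

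For \ref{girdles:central}, suppose a cell $c$ is central in both $G_i$ and $G_j$ for some $i \neq j$; then $c$ is invariant under both reflections $\varphi_i$ and $\varphi_j$, and hence under their composition $\varphi_i\varphi_j$. In $D_n$, the composition of two distinct reflections is a non-trivial rotation, which lies in $H$. By the corollary to Lemma~\ref{lem:spindle}, the only cells fixed by a non-trivial rotation in $H$ are $c_0$ and $c_1$, so $c \in \{c_0,c_1\}$, completing \ref{girdles:central}.

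The only mildly delicate point is the conjugation step in \ref{girdles:south}: one needs to know that conjugating a rotation by a reflection in the specific subgroup $H \cong D_n$ produces a rotation (again in $H$), so that the resulting element is a legitimate candidate for applying the uniqueness of invariant cells under rotations. Everything else is a direct application of Lemma~\ref{lem:girdle}\ref{girdle:invariant} together with the characterisation of rotation-invariant cells from Section~\ref{sec:rotative}.
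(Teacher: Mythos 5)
Your proof is correct. For part~\ref{girdles:central} you use exactly the paper's key step: a cell $c$ that is central in $G_i$ and $G_j$ is invariant under $\varphi_i\circ\varphi_j$, which is a non-trivial rotation, and rotations fix only the two poles.

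For part~\ref{girdles:south}, however, your route is genuinely different from the paper's. The paper argues topologically: since $\varphi_i$ and $\varphi_j$ have their invariant cells incident with $c_0$ lying \emph{opposite}, the central cells of $G_i$ incident with $c_0$ lie on opposite sides of $G_j$; because the central cells of a girdle separate its two sides, $G_i$ must cross $G_j$ in a central cell distinct from $c_0$, and the composition argument then identifies this crossing cell as the south pole. You instead avoid the Jordan-curve-style separation argument entirely and argue algebraically: conjugating a rotation by a reflection inside $H\cong D_n$ is again a rotation, so $\varphi_i(c_1)$ is fixed by all rotations in $H$; by the uniqueness of rotation-invariant cells it must be $c_0$ or $c_1$, and since $\varphi_i$ fixes $c_0$ and is a bijection, $\varphi_i(c_1)=c_1$. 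Then Lemma~\ref{lem:girdle}\ref{girdle:invariant} gives centrality. The paper's version has the advantage of establishing (i) and (ii) in a single pass and of exhibiting explicitly \emph{how} two girdles meet (they cross through $c_1$), which is structural information used implicitly later. Your version is shorter and purely group-theoretic, sidestepping the planarity input in (i), though planarity still enters through the Brouwer-fixed-point corollary of Lemma~\ref{lem:spindle} that both proofs rely on for the uniqueness of the poles.
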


\begin{proof}
  The north pole is a central cell of every girdle by definition
  (Definition~\ref{def:girdle}). Let
  $G_i,G_j$ be two distinct girdles. We first show that there is
  another cell that is central in both of them and then prove that
  this cell is the south pole. This will prove
  both~\ref{girdles:south} and~\ref{girdles:central}.

  Since for each of $\varphi_i,\varphi_j$, the invariant cells
  incident with $c_0$ lie opposite, the central cells of $G_i$
  incident with the north pole lie in different sides (or on the
  boundaries of different sides) of $G_j$. Since the central cells of
  a girdle separate its sides, $G_i$ and $G_j$ meet in at least one
  central cell apart from the north pole. Let $c$ be such a cell.

  Consider the automorphism $\varphi_i\circ\varphi_j$. Since $c$ is
  invariant both under $\varphi_i$ and under $\varphi_j$, it is also
  invariant under $\varphi_i\circ\varphi_j$. But the composition of
  two distinct reflections is always a rotation and thus, the only
  cells invariant under $\varphi_i\circ\varphi_j$ are the north and
  south pole, implying that $c$ is the south pole.
\end{proof}

Since the cells $a_0,\dotsc,a_{2n-1}$ form a cyclic sequence around
$c_0$, we will also consider their indices modulo $2n$, similarly to
the previous section. For simplicity, we will again write $a_i$
instead of $a_{i\pmod{2n}}$. The same kind of notation will be used
for the girdles $G_0,\dotsc,G_{n-1}$ (modulo $n$ instead of
modulo $2n$).

The rotations can be enumerated as $\rho_1,\dotsc,\rho_{n-1}$ so that
every $\rho_i$ satisfies
\begin{equation*}
  \rho_i(a_j) = a_{j+2i}
\end{equation*}
for all $j=0,\dotsc,2n-1$. With this notation, we have
$\rho_1^i=\rho_i$ for all $i=1,\dotsc,n-1$ (and $\rho_1^n=\id$).

Corollary~\ref{cor:incident} and the automorphism $\rho_1$ show that
$G_0$ is isomorphic to $G_2$, $G_1$ is isomorphic to $G_3$, and so
on. If $n$ is odd, this implies that all girdles are isomorphic; if
$n$ is even, all $G_i$ with even $i$ are isomorphic as well as the
ones with odd $i$. Moreover, in the latter case every girdle is
mapped to itself by the rotation $\rho_{\frac{n}2}$. In that case we
call the girdles \emph{symmetric} and $\rho_{\frac{n}2}$ a
\emph{symmetry} of each girdle. We thus have proved the following.

\begin{lemma}\label{lem:girdlesisom}
  For every $i=1,\dotsc,n-1$, the following holds.
  \begin{enumerate}
  \item\label{girdles:isom}
    For every $j=0,\dotsc,n-1$, the rotation $\rho_i$ induces an
    isomorphism between the girdles $G_j$ and $G_{j+2i}$.
  \item\label{girdles:odd}
    If $n$ is odd, all girdles are isomorphic.
  \item\label{girdles:even}
    If $n$ is even, $\rho_{\frac{n}2}$ is a symmetry of each girdle
    and every two girdles $G_i, G_j$ with $i-j$ even are
    isomorphic.
  \end{enumerate}
\end{lemma}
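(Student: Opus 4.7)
The plan is to derive everything from the fact that each rotation $\rho_i$ conjugates the reflection $\varphi_j$ to the reflection $\varphi_{j+2i}$ (girdle indices mod $n$). To establish this conjugation formula, I first note that $\rho_i \varphi_j \rho_i^{-1}$ lies in $H$ and, since the composition of two distinct reflections in a dihedral group is a rotation, must itself be a reflection. To identify which, I compute its action on the cells $a_0,\dotsc,a_{2n-1}$ incident with $c_0$: it fixes precisely $\rho_i(a_j) = a_{j+2i}$ and $\rho_i(a_{n+j}) = a_{n+j+2i}$ (indices mod $2n$). By Lemma~\ref{lem:fixedcells}\ref{fixedcells:refl} combined with Corollary~\ref{cor:incident}, this forces $\rho_i \varphi_j \rho_i^{-1} = \varphi_{j+2i}$, with the girdle index taken mod $n$.

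Claim~(i) is then immediate. The girdle $G_j$ is built (Lemma~\ref{lem:pre-girdle}, Definition~\ref{def:girdle}) from the cells invariant under $\varphi_j$ together with the boundaries of its invariant faces. Since $\rho_i$ is a homeomorphism of the sphere sending cells to cells and preserving incidences, and since it conjugates $\varphi_j$ to $\varphi_{j+2i}$, it maps the cyclic sequence of invariant cells of $\varphi_j$ bijectively onto that of $\varphi_{j+2i}$. Therefore $\rho_i$ restricts to an isomorphism between $G_j$ and $G_{j+2i}$.

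For (ii), when $n$ is odd we have $\gcd(2,n) = 1$, so $i \mapsto 2i \pmod{n}$ is a bijection and every girdle $G_j$ equals $\rho_i(G_0)$ for a suitable $i$, proving they are all isomorphic. For (iii), when $n$ is even, setting $i = n/2$ in~(i) gives $\rho_{n/2}(G_j) = G_{j+n} = G_j$, so $\rho_{n/2}$ is a symmetry of each girdle; and whenever $i - j$ is even, the congruence $2k \equiv i-j \pmod{n}$ has a solution $k$, and the corresponding $\rho_k$ is an isomorphism $G_j \to G_i$.

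The main obstacle is really just keeping the different moduli straight---cells $a_\ell$ around $c_0$ live mod $2n$ while girdles themselves are indexed mod $n$---and the clean resolution is that $\rho_1$ shifts the cells $a_\ell$ by two positions, which matches exactly how the girdle indices should transform. Once the conjugation identity $\rho_i \varphi_j \rho_i^{-1} = \varphi_{j+2i}$ is in hand, every assertion of the lemma reduces to a short arithmetic statement about the image of $i \mapsto 2i$ modulo $n$.
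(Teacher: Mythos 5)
Your proof is correct and follows essentially the same route as the paper: in both cases the key observation is that each rotation $\rho_i$ shifts the cells $a_\ell$ by $2i$ positions and hence sends the girdle $G_j$ to $G_{j+2i}$ (the paper invokes Corollary~\ref{cor:incident} directly, whereas you make the underlying mechanism explicit via the conjugation identity $\rho_i\varphi_j\rho_i^{-1}=\varphi_{j+2i}$), after which parts~(ii) and~(iii) reduce to the same elementary arithmetic of the map $k\mapsto 2k$ modulo $n$.
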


Recall that Lemma~\ref{lem:girdlesmeet} tells us that any two girdles cross
precisely twice: once at each of the poles. However, while a central
cell of a girdle cannot be a central cell of another girdle (unless
it is one of the poles), it might well be an \emph{outer} cell of
another girdle.

Since every girdle $G_i$ has both poles as central cells, they divide
it into two parts in a natural way: if $(x_j)_{j\in\ZZ_m}$ is the
cyclic sequence from Lemma~\ref{lem:pre-girdle} with $x_0=c_0$ (note
that by Lemmas~\ref{lem:pre-girdle}
and~\ref{lem:girdle}\ref{girdle:invariant} this sequence is unique up
to orientation), then $x_k=c_1$ for some $k$ and we can consider the
sequences $x_0,x_1,\dotsc,x_k$ and $x_k,x_{k+1},\dotsc,x_{m-1},x_0$.
One of the sequences contains $a_i$, so we denote the union of its
elements and their boundaries by $M_i$. The other sequence contains
$a_{n+i}$, we denote the union of its elements and their boundaries by
$M_{n+i}$. We call $M_i$ and $M_{n+i}$ \emph{meridians}, the cells
from the respective sequence of $x_j$'s are the \emph{central cells}
of $M_i$ and $M_{n+i}$, respectively. The other cells are \emph{outer
cells}, as before. Note that a central cell of $G_i$ that lies on the
boundary of one of the poles will be contained in both $M_i$ and
$M_{n+i}$. However, it will only be a central cell in one of them.
Clearly, $G_i = M_i\cup M_{n+i}$ and thus $\bigcup_{i=0}^{n-1}G_i =
\bigcup_{i=0}^{2n-1}M_i$.

Like the girdles, the meridians form a cyclic sequence; for
simplicity, we will write $M_i$ instead of $M_{i\pmod{2n}}$.

\begin{definition}[Skeleton]\label{def:skeleton}
  The union $S := \bigcup_{i=0}^{2n-1}M_i$ is the \emph{skeleton} of
  $T$. For every $i=0,\dotsc,2n-1$, we say that the meridians $M_i$
  and $M_{i+1}$ are \emph{adjacent}. Every face of $S$ that is
  not a central cell of at least one of the meridians (equivalently:
  of one of the girdles) is a \emph{segment} of $S$.
\end{definition}

Note that the skeleton of $T$ is unique since all the girdles are.

\begin{lemma}\label{lem:skeleton1}
  The skeleton $S$ has the following properties.
  \begin{enumerate}
  \item\label{skeleton:reflections}
    Every reflection $\varphi_i$, $0\le i\le n-1$, induces an
    isomorphism between $M_{i-j}$ and $M_{i+j}$ for every
    $j=1,\dotsc,n-1$.
  \item\label{skeleton:rotations}
    Every rotation $\rho_i$, $1\le i\le n-1$, induces an isomorphism
    between $M_j$ and $M_{j+2i}$ for every $j=0,\dotsc,2n-1$.
  \item\label{skeleton:meridians}
    There is an isomorphism in $H$ that maps $M_i$ to $M_j$ if and
    only if $i-j$ is even.
  \item\label{skeleton:central}
    For every central cell $c$ of a meridian $M_i$, $0\le i\le 2n-1$,
    exactly one of the following holds.
    \renewcommand{\theenumii}{\rm (C\arabic{enumii})}
    \begin{enumerate}
    \item\label{central:pole}
      $c$ is a pole;
    \item\label{central:incident}
      $c$ lies on the boundary of a pole;
    \item\label{central:disjoint}
      $c$ is not contained in any other meridian;
    \item\label{central:adjacent}
      $c$ is an outer cell of both meridians adjacent to $M_i$ and
      not contained in any other meridian.
    \end{enumerate}
  \item\label{skeleton:boundaries}
    Every segment of $S$ is bounded by a cycle that is contained in
    the union of two adjacent meridians.
  \item\label{skeleton:number}
    There is a non-negative integer $s$ such that for every pair
    $(M_i,M_{i+1})$ of adjacent meridians there are precisely $s$
    such segments.
  \end{enumerate}
\end{lemma}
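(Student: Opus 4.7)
The plan is to treat the six parts in turn, with part~(iv) being the main obstacle; the rest are essentially bookkeeping around dihedral conjugation and planar topology.

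For (i)--(iii) the key is a standard computation in $D_n$: from $\varphi_i(a_k) = a_{2i-k}$ one obtains $\varphi_i\varphi_{i-j}\varphi_i^{-1} = \varphi_{i+j}$ by comparing fixed $a_\ell$'s, so $\varphi_i$ sends $G_{i-j}$ to $G_{i+j}$. Since $\varphi_i$ fixes both poles, it respects the splitting of each girdle into two meridians at $c_0$ and $c_1$, and since $\varphi_i(a_{i-j}) = a_{i+j}$ it maps $M_{i-j}$ to $M_{i+j}$, giving~(i). The analogous argument using $\rho_i(a_k) = a_{k+2i}$ together with $\rho_i\varphi_j\rho_i^{-1} = \varphi_{i+j}$ gives~(ii). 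Combining these, every element of $H$ shifts meridian indices by an even amount, and conversely $\rho_{(j-i)/2}$ realises every even shift, which yields~(iii).

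For~(iv), the four cases are pairwise exclusive -- (C1) and (C2) by dimension and (C3), (C4) by definition -- so it suffices to show that every central cell $c$ of $M_i$ lies in at least one of them. The cases $c \in \{c_0,c_1\}$ and ``$c$ on the boundary of a pole'' yield (C1) and (C2). Otherwise, suppose $c \in M_j$ with $j \not\equiv i \pmod{n}$. Then $c$ cannot be central in $M_j$, because the composition $\varphi_{i \bmod n}\circ\varphi_{j \bmod n}$ would be a nontrivial rotation fixing $c$, contradicting the fact (from Section~\ref{sec:rotative}) that only the poles are invariant under nontrivial rotations. Hence $c$ is outer in $M_j$, and applying the reflection $\varphi_{i \bmod n}$ (which fixes $c$ and, by~(i), sends $M_j$ to $M_{2i-j}$) shows $c \in M_{2i-j}$ as well. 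The remaining and most delicate step is to force $\{j,2i-j\} = \{i-1,i+1\} \pmod{2n}$: topologically, a central face of $M_j$ whose boundary meets $c$ occupies part of a sector of the sphere immediately adjacent to $M_j$'s central path, while $c$ itself sits on $M_i$'s central path, so this face must lie in a sector bounded by both $M_i$ and $M_j$, forcing the two meridians to be adjacent in the cyclic order.

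For~(v) and~(vi), the picture provided by~(iv) is that $S$ subdivides the sphere into $2n$ \emph{sectors}, each bounded by two consecutive meridians, with no meridian entering a non-adjacent sector. Every segment of $S$ therefore lies inside a single sector and its boundary is contained in the corresponding $M_j \cup M_{j+1}$; the boundary is a cycle because the restriction of $S$ to one sector and its two bounding meridians is 2-connected, which in turn follows from the structure established in Sections~\ref{sec:reflective}--\ref{sec:rotative}. Finally,~(vi) follows from the transitive action of $H$ on the set of sectors: rotations act transitively within each parity class and any reflection exchanges the two classes, so the number $s$ of segments in each adjacent pair of meridians is the same.
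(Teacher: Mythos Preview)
Your proof is correct and follows essentially the same approach as the paper's. The only notable difference is in part~(iv): where you invoke the full $2n$-sector decomposition of the sphere by the meridians' central paths, the paper instead separates the sphere into just two parts using the central cells of $M_{i-1}\cup M_{i+1}$ (one part containing the non-pole central cells of $M_i$, the other those of all remaining meridians), which makes the conclusion $j\in\{i-1,i+1\}$ slightly more direct and avoids having to set up the sector picture before it is formally justified.
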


\begin{proof}
  Claims~\ref{skeleton:reflections} and~\ref{skeleton:rotations}
  follow from Corollary~\ref{cor:incident} and the way $\varphi_i$
  and $\rho_i$ act on $a_1,\dotsc,a_{2n}$.
  Claim~\ref{skeleton:meridians} is an immediate corollary
  of~\ref{skeleton:reflections} or~\ref{skeleton:rotations}.

  To prove~\ref{skeleton:central}, let $c$ be a central cell of
  $M_i$. Note first that only one of the
  cases~\ref{central:pole}--\ref{central:adjacent} can hold. Now
  assume that~\ref{central:pole}--\ref{central:disjoint} do not hold,
  i.e., $c$ is neither a pole nor lies on the boundary of a pole and
  there is at least one meridian $M_j$ with $j\not=i$ that contains
  $c$. By Lemma~\ref{lem:girdlesmeet}\ref{girdles:central}, $c$ is an
  outer cell of every such meridian $M_j$.

  The central cells of $M_{i-1}$ and $M_{i+1}$ separate the sphere
  into two parts, one of which contains the central cells of $M_i$
  (apart from the poles) while the other contains the central cells
  (apart from the poles) of all other meridians. This implies that
  $c$ is an outer cell of at least one of $M_{i-1},M_{i+1}$ and not
  contained in any other meridian; it remains to show that $c$ is an
  outer cell of both $M_{i-1}$ and $M_{i+1}$.
  By~\ref{skeleton:reflections}, $\varphi_i$ (or $\varphi_{i-n}$ if
  $i>n$) induces an isomorphism between $M_{i-1}$ and $M_{i+1}$ and
  since $c$ is invariant under $\varphi_i$, it is an outer cell of
  both meridians adjacent to $M_i$. This
  proves~\ref{skeleton:central}.

  For~\ref{skeleton:boundaries}, note first that every segment of $S$
  is bounded by a cycle since the graph $S$ is 2-connected. To prove
  the other half of the statement, choose $2n$ arcs (injective
  topological paths) from one pole to the other, one in the union of
  the central cells of each meridian. By~\ref{skeleton:central}, these
  arcs only meet in the two poles and thus divide the sphere into $2n$
  discs, each having a boundary that is contained in the union of two
  of the arcs, with the corresponding meridians being adjacent. Since
  every segment of $S$ is contained in such a disc and no other
  meridian contains a point in this disc,~\ref{skeleton:boundaries}
  follows.

  Finally,~\ref{skeleton:number} follows by applying rotations and
  reflections to the segments with their boundaries in $M_1\cup M_2$.
\end{proof}

By Lemma~\ref{lem:skeleton1}\ref{skeleton:number}, we can denote
the segments of $S$ whose boundaries are contained in the union of
$M_i$ and $M_{i+1}$ by $f_1^i,\dotsc,f_s^i$. Note that the cycle
from Lemma~\ref{lem:skeleton1}\ref{skeleton:boundaries} bounding
$f_j^i$ is the union of a subpath of $M_i$ and a subpath of $M_{i+1}$.
These paths meet in their end vertices; denote by $v_j^i$ their end
vertex closer to the north pole and by $w_j^i$ the one closer to the
south pole. Without loss of generality, we assume that the enumeration
of $f_1^i,\dotsc,f_s^i$ is chosen so that $v_j^i$ is closer to the
north pole than $v_{j'}^i$ whenever $j<j'$. Finally, let $e_j^i$ be the
edge on the boundary of $f_j^i$ that is incident with $v_j^i$ and
\begin{enumerate}
\item contained in $M_i$ if $i$ is even, or
\item contained in $M_{i+1}$ if $i$ is odd.
\end{enumerate}
With this notation and Lemma~\ref{lem:skeleton1}\ref{skeleton:reflections}
and~\ref{skeleton:rotations}, we obtain the following.

\begin{lemma}\label{lem:skeleton2}
  Let $j\in\{1,\dotsc,s\}$.
  \begin{enumerate}
  \item\label{skeleton:insegment}
    The pair $(T,f_j^i)$ induces a near-triangulation $N_j^i$ at $v_j^i$ and
    $e_j^i$ for every $i$.
  \item\label{skeleton:isom}
    The near-triangulations $N_j^0,\dotsc,N_j^{2n-1}$ are isomorphic.
  \end{enumerate}
\end{lemma}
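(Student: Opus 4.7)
My plan for~\ref{skeleton:insegment} is to invoke the general construction described at the end of Section~\ref{sec:preliminaries}. By Lemma~\ref{lem:skeleton1}\ref{skeleton:boundaries} the boundary of $f_j^i$ is a simple cycle $C$ in $S$; the edge $e_j^i$ lies on $C$ and $v_j^i$ is one of its end vertices. The closed disc bounded by $C$ on the sphere that contains $f_j^i$ is triangulated by the faces of $T$ inside $f_j^i$, so rooting at $f_j^i$, $e_j^i$, and $v_j^i$ yields the required near-triangulation $N_j^i$.

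For~\ref{skeleton:isom}, my plan is to exhibit, for any pair $i,i'\in\{0,\dots,2n-1\}$, an element $\psi\in H$ with $\psi(f_j^i)=f_j^{i'}$ that also sends $v_j^i$ to $v_j^{i'}$ and $e_j^i$ to $e_j^{i'}$; the restriction of such $\psi$ will then be the desired isomorphism $N_j^i\to N_j^{i'}$. The candidates come from Lemma~\ref{lem:skeleton1}\ref{skeleton:reflections} and~\ref{skeleton:rotations}. The rotation $\rho_k$ sends the adjacent pair $(M_i,M_{i+1})$ to $(M_{i+2k},M_{i+2k+1})$; because it preserves orientation around $c_0$, it preserves the enumeration of segments by distance to the north pole, giving $\rho_k(f_j^i)=f_j^{i+2k}$. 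Similarly, the reflection $\varphi_i$ swaps $M_{i-j-1}$ with $M_{i+j+1}$ and $M_{i-j}$ with $M_{i+j}$, whence $\varphi_i(f_\ell^{i-j-1})=f_\ell^{i+j}$. Composing rotations and reflections produces a $\psi$ connecting any two prescribed indices.

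The hard part, and the only genuinely delicate point, will be to check that the $\psi$ I produce respects the rooting. Since every element of $H$ is a self-homeomorphism of the sphere fixing both poles, it preserves the property of being the end vertex of the bounding cycle closer to the north pole, and therefore sends $v_j^i$ to the distinguished root vertex of the image segment. For the edge the parity convention in the definition of $e_j^i$ is critical. Rotations preserve the parity of $i$, so $e_j^i$ and $e_j^{i+2k}$ lie in meridians that correspond under $\rho_k$, giving $\rho_k(e_j^i)=e_j^{i+2k}$. Reflections, by contrast, toggle parity, since $(i+j)-(i-j-1)=2j+1$ is odd; hence $e_\ell^{i-j-1}\in M_{i-j-1}$ precisely when $e_\ell^{i+j}\in M_{i+j+1}$, and these are exactly the meridians exchanged by $\varphi_i$. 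In either parity case the image of the root edge is characterised by the same rule as the root edge of the target segment and must therefore coincide with it. I expect this parity bookkeeping in the reflection case to be the only non-routine step of the proof.
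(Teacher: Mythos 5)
Your proposal is correct and takes essentially the same approach as the paper: the paper presents Lemma~\ref{lem:skeleton2} as an immediate consequence of Lemma~\ref{lem:skeleton1}\ref{skeleton:reflections} and~\ref{skeleton:rotations} together with the rooting conventions for $v_j^i$ and $e_j^i$, giving no explicit proof. You simply make explicit what the paper leaves implicit, and in particular your parity bookkeeping for the reflection case is exactly the reason the paper defines $e_j^i$ to lie in $M_i$ or $M_{i+1}$ according to the parity of $i$.
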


For a complete description of all possible skeletons, we need to
characterise their structure at the poles and at other points where
two adjacent meridians meet.

\begin{lemma}\label{lem:skeleton:poles}
  Let $S$ be a skeleton and $c$ be one of the poles of $T$. Then the
  structure of $S$ at $c$ is the following.
  \begin{enumerate}
  \item\label{poles:vertex}
    If $c$ is a vertex, then either
    \begin{enumerate}
    \item\label{poles:v:disjoint}
      no two meridians meet in a cell incident with $c$ or
    \item\label{poles:v:edge}
      there is a number $k\ge 1$ such that every two adjacent meridians
      meet in their first $k$ edges starting from $c$.
    \end{enumerate}
  \item\label{poles:edge}
    If $c=uv$ is an edge, then two non-adjacent meridians, say
    $M_0$ and $M_2$, have $u$ respectively $v$ as a central cell and
    the other two have its incident faces as central cells.
    No two meridians meet in an edge $e\not=c$ incident with $u$ or
    $v$.
  \item\label{poles:face}
    If $c$ is a face $f$ with vertices $u,v,w$ on its boundary,
    then three mutually non-adjacent meridians, say $M_0,M_2,M_4$, have $u$,
    $v$, respectively $w$ as a central cell and the other three have
    its incident edges as central cells. Either
    \begin{enumerate}
    \item\label{poles:f:disjoint}
      no two meridians meet in a cell incident with exactly one of
      $u,v,w$ or
    \item\label{poles:f:edge}
      there is a number $k\ge 1$ such that every two adjacent meridians
      meet in their first $k$ edges starting from $c$.
    \end{enumerate}
  \end{enumerate}
\end{lemma}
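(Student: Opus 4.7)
The plan is to exploit the symmetry of the skeleton recorded in Lemma~\ref{lem:skeleton1} together with the cyclic arrangement of the cells $a_0,\dotsc,a_{2n-1}$ at the pole $c$. Combining the rotation $\rho_1$ (which sends $M_j$ to $M_{j+2}$) with any reflection $\varphi_i$ (which swaps $M_{i-j}$ with $M_{i+j}$) shows that $H$ acts transitively on the set of unordered adjacent pairs $\{M_j,M_{j+1}\}$. Hence the local picture of the skeleton at $c$ is the same for every adjacent pair, and the existence of any sharing forces uniform sharing across all pairs; this reduces each sub-claim to the analysis of a single adjacent pair.

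For parts~\ref{poles:edge} and~\ref{poles:face} I would begin with a count of cells: only $4$ cells are incident with an edge, forcing $2n\le4$ and hence $n=2$ in~\ref{poles:edge}; only $6$ cells lie on the boundary of a face, so $n\le3$, while Corollary~\ref{cor:dihedral} combined with the assumption that $H$ contains both a reflection and a rotation of order at least $2$ rules out $n=2$ (since $D_2$ is not a subgroup of $D_3$), forcing $n=3$ in~\ref{poles:face}. The opposite pairs at an edge are (vertex, vertex) and (face, face); at a face they are vertex--edge pairs. Together with the fact that $\rho_1$ preserves cell type, this determines the assignment of the $a_i$'s to the meridians as claimed. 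For the no-extra-sharing clause of~\ref{poles:edge}, I would examine the edges of $M_i$ incident with $u$ other than $c$ by the parity of $d(u)$: in the even case $M_0$ contributes only the central edge opposite to $c$ at $u$, in the odd case it contributes the two outer edges of the central face opposite to $c$ at $u$. Comparing these with the outer edges $ux_1$, $ux_{d(u)-1}$ coming from the central faces $a_1,a_3$ of $M_1,M_3$, a coincidence would only be possible when $d(u)=3$, and the rotation swapping $u$ and $v$ then forces $d(v)=3$ as well, implying $T=K_4$ in a simple triangulation and contradicting non-triviality.

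For the dichotomy between~\ref{poles:v:disjoint} and~\ref{poles:v:edge} in~\ref{poles:vertex}, and the analogous dichotomy in~\ref{poles:face}, suppose $M_i$ and $M_{i+1}$ share a cell $x$ incident with $c$ other than $c$ itself. By Lemma~\ref{lem:skeleton1}\ref{skeleton:central}\ref{central:adjacent}, such an $x$ must be an outer cell of both meridians. I would proceed inductively, walking outwards from $c$ through the strip of faces of $T$ lying between $M_i$ and $M_{i+1}$, and argue that the shared cells form an initial path of edges: at each step, if the current shared cell is an edge, then either the next outer cell on the boundary of the subsequent central cell of $G_i$ (equivalently, of $G_{i+1}$) is also shared, extending the path by one edge, or the sharing terminates. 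The reflection $\varphi_i$ fixes $M_i$ and swaps $M_{i+1}$ with $M_{i-1}$, providing the left--right symmetry that rules out the shared region branching into a more complicated shape. Letting $k$ be the length of this maximal shared path, transitivity of $H$ on adjacent pairs ensures the same $k$ works for every adjacent pair. The main obstacle is formalising the inductive step, in particular showing that a shared vertex at one stage forces the next outer edge on the dividing line to be shared; this rests on a careful local analysis of how outer cells on the boundaries of consecutive central cells of a girdle link up, combined with Lemma~\ref{lem:skeleton1}\ref{skeleton:boundaries} to confine the shared region to the union of the two adjacent meridians.
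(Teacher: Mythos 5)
Your overall strategy---exploit the symmetry of Lemma~\ref{lem:skeleton1} to reduce to a single adjacent pair of meridians, then analyse the local structure at $c$---is aligned with the paper. Your argument for the no-extra-sharing clause in~\ref{poles:edge} is genuinely different from the paper's, though: the paper supposes $M_0$ meets $M_1$ in $uw_1$, applies $\varphi_0$ to also obtain $uw_3\in M_0$, concludes that $w_1w_3$ is a central edge of $M_0$, and then applies $\varphi_1$ to also make $w_1w_3$ central in $M_2$, forcing a double edge and hence $T\simeq K_4$. Your degree-parity analysis of $u$ (leading to $d(u)=d(v)=3$ and hence $K_4$) is a valid alternative; the paper's double-reflection trick is a bit shorter, but both give the same contradiction with non-triviality.

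There are, however, two concrete errors in your sketch of the dichotomy step. First, the appeal to Lemma~\ref{lem:skeleton1}\ref{skeleton:central}\ref{central:adjacent} is misplaced: a cell $x$ incident with the pole $c$ falls into case~\ref{central:incident}, not~\ref{central:adjacent}, so the lemma gives you no information about the other meridian; and the conclusion that $x$ is an outer cell of \emph{both} meridians is actually false when $x=a_j$, a central cell of $M_j$ that may lie in $M_{j\pm1}$ as an outer cell. Second, the claim that $\varphi_i$ supplies a ``left--right symmetry'' of the strip between $M_i$ and $M_{i+1}$ is incorrect: $\varphi_i$ fixes $M_i$ but sends $M_{i+1}$ to $M_{i-1}$, so it carries that strip to the strip between $M_{i-1}$ and $M_i$. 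Indeed no non-identity element of $H$ stabilises the unordered pair $\{M_i,M_{i+1}\}$: rotations shift indices by an even amount and reflections send $j\mapsto 2i-j$, neither of which can fix $\{j,j+1\}$. Since this fictitious symmetry was your principal tool for ruling out branching of the shared region, the inductive step---which you yourself flag as unfinished---remains a genuine gap. (For what it is worth, the paper's own proof of parts~\ref{poles:vertex} and~\ref{poles:face} consists of the single sentence that they ``follow from'' Lemma~\ref{lem:skeleton1}\ref{skeleton:reflections} and~\ref{skeleton:rotations}, so it too leaves the local analysis implicit; but the argument it gestures at does not rely on the non-existent strip symmetry.)
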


\begin{proof}
  Statements~\ref{poles:vertex} and~\ref{poles:face} follow from
  Lemma~\ref{lem:skeleton1}\ref{skeleton:reflections}
  and~\ref{skeleton:rotations}. The first claim in~\ref{poles:edge} is
  immediate, since each of the four meridians contains a different
  cell incident with $c$ as a central cell. For $i\in\{1,3\}$, denote
  by $f_i$ the face incident with $c$ that is a central cell of $M_i$
  (see Figure~\ref{fig:skeleton:poles}\ref{poles:edge}). Since $u,v$
  are incident with $f_1$ and $f_3$, there are unique vertices
  $w_1,w_3$ different from $u$ and $v$ that are incident with $f_1$
  and $f_3$, respectively. Note that $w_1$ and $w_3$ are distinct,
  since otherwise $f_1$ and $f_3$ would have the same set of incident
  vertices, which is not possible as our triangulations are simple and
  non-trivial. Suppose that $M_0$ meets $M_1$ in an edge $e\not=c$
  incident with $u$; this has to be the edge $uw_1$. By applying
  $\varphi_0$, we see that $M_0$ meets $M_3$ in the edge $uw_3$. Thus,
  $w_1$ and $w_3$ are connected by an edge $e_0$ that is central in
  $M_0$. Applying $\varphi_1$ shows that $M_2$ also has a central edge
  $e_2$ that connects $w_1$ and $w_3$. Since our triangulations are
  simple, the edges $e_0$ and $e_2$ are identical and $T$ is a $K_4$.
  Since we assume all triangulations to be non-trivial, this is a
  contradiction. We have thus shown~\ref{poles:edge}.
\end{proof}

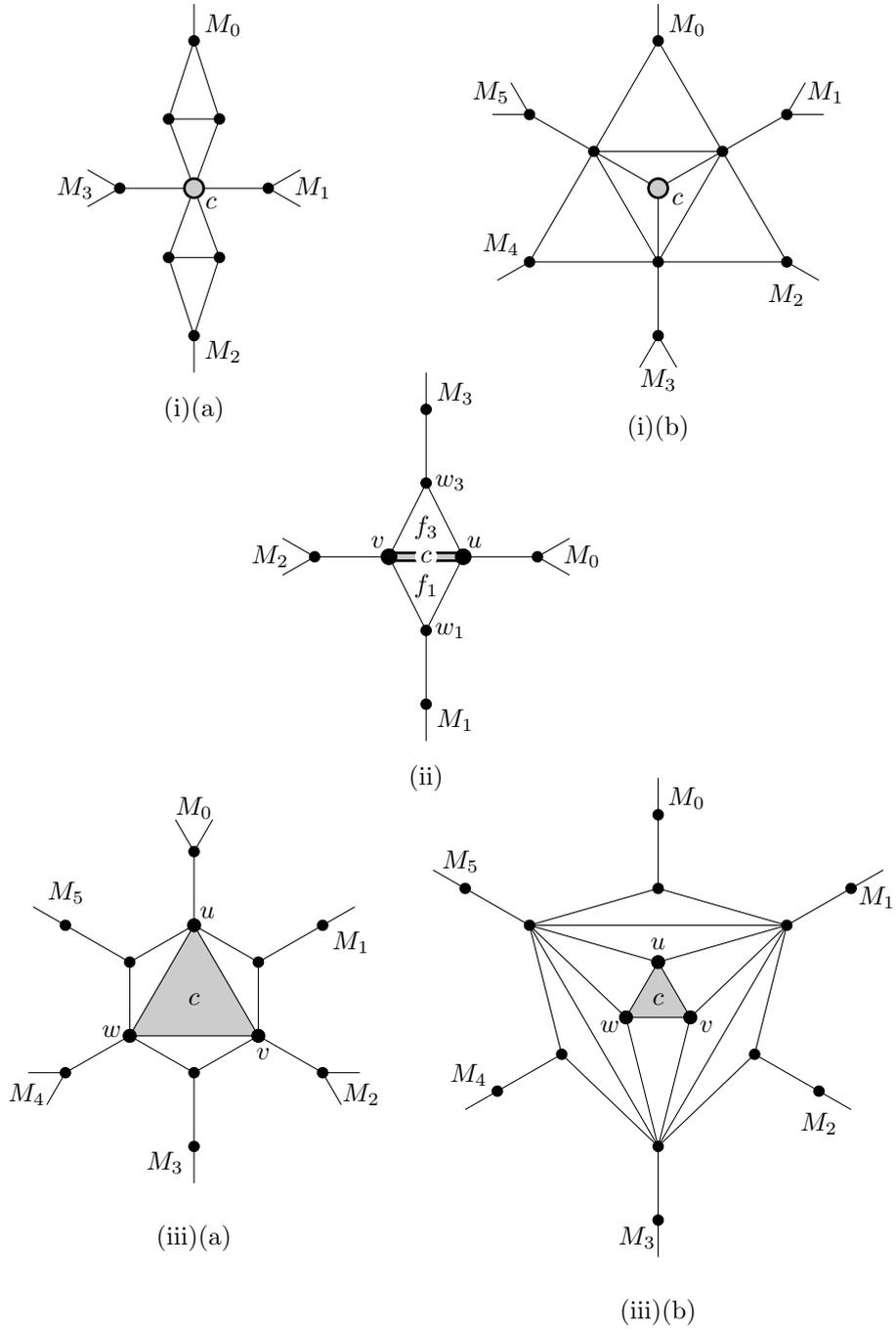
\begin{figure}[htbp]
  \centering
  \begin{tikzpicture}
    \begin{scope}
      \draw (70:1) -- (0,0) -- (110:1) -- (90:2) -- (70:1) -- (110:1);
      \draw (250:1) -- (0,0) -- (290:1) -- (270:2) -- (250:1) -- (290:1);
      \draw (0:1) -- (0,0) -- (180:1);
      \draw (0:1) +(30:.5) -- +(0,0) -- +(330:.5);
      \draw (180:1) +(150:.5) -- +(0,0) -- +(210:.5);
      \draw (90:2) -- (90:2.5);
      \draw (270:2) -- (270:2.5);

      \filldraw (0,0) circle (4pt);
      \filldraw[black!20] (0,0) circle (3pt);
      \filldraw (0:1) circle (2pt);
      \filldraw (70:1) circle (2pt);
      \filldraw (110:1) circle (2pt);
      \filldraw (180:1) circle (2pt);
      \filldraw (250:1) circle (2pt);
      \filldraw (290:1) circle (2pt);
      \filldraw (90:2) circle (2pt);
      \filldraw (270:2) circle (2pt);
      
      \draw (320:.3) node {$c$};
      \draw (80:2.25) node {$M_0$};
      \draw (0:1.6) node {$M_1$};
      \draw (280:2.25) node {$M_2$};
      \draw (180:1.6) node {$M_3$};
      
      \draw (0,-3) node {(i)(a)};
    \end{scope}

    \begin{scope}[xshift=6.25cm]
      \draw (0,0) -- (30:1) -- (30:2);
      \draw (0,0) -- (150:1) -- (150:2);
      \draw (0,0) -- (270:1) -- (270:2);
      \draw (30:1) -- (150:1) -- (270:1) -- (30:1) -- (90:2) -- (150:1) -- (210:2) -- (270:1) -- (330:2) -- cycle;
      \draw (90:2) -- (90:2.5);
      \draw (210:2) -- (210:2.5);
      \draw (330:2) -- (330:2.5);
      \draw (30:2) +(0:.5) -- +(0,0) -- +(60:.5);
      \draw (150:2) +(120:.5) -- +(0,0) -- +(180:.5);
      \draw (270:2) +(240:.5) -- +(0,0) -- +(300:.5);

      \filldraw (0,0) circle (4pt);
      \filldraw[black!20] (0,0) circle (3pt);
      \filldraw (30:1) circle (2pt);
      \filldraw (150:1) circle (2pt);
      \filldraw (270:1) circle (2pt);
      \filldraw (30:2) circle (2pt);
      \filldraw (90:2) circle (2pt);
      \filldraw (150:2) circle (2pt);
      \filldraw (210:2) circle (2pt);
      \filldraw (270:2) circle (2pt);
      \filldraw (330:2) circle (2pt);
      
      \draw (330:.3) node {$c$};
      \draw (80:2.25) node {$M_0$};
      \draw (30:2.6) node {$M_1$};
      \draw (320:2.25) node {$M_2$};
      \draw (270:2.6) node {$M_3$};
      \draw (200:2.25) node {$M_4$};
      \draw (150:2.6) node {$M_5$};
      
      \draw (0,-3.25) node {(i)(b)};
    \end{scope}
    
    \begin{scope}[xshift=3.125cm,yshift=-5cm]
      \draw (-1.5,0) -- (1.5,0);
      \draw[line width=4pt] (-.5,0) -- (.5,0);
      \draw[black!20,line width=2pt] (-.5,0) -- (.5,0);
      \draw (-.5,0) -- (0,1) -- (.5,0) -- (0,-1) -- cycle;
      \draw (0,1) -- (0,2.5);
      \draw (0,-1) -- (0,-2.5);
      \draw (1.5,0) +(30:.5) -- +(0,0) -- +(330:.5);
      \draw (-1.5,0) +(150:.5) -- +(0,0) -- +(210:.5);
    
      \filldraw (-.5,0) circle (3pt);
      \filldraw (.5,0) circle (3pt);
      \filldraw (-1.5,0) circle (2pt);
      \filldraw (1.5,0) circle (2pt);
      \filldraw (0,1) circle (2pt);
      \filldraw (0,-1) circle (2pt);
      \filldraw (0,2) circle (2pt);
      \filldraw (0,-2) circle (2pt);
      
      \filldraw[white] (0,0) circle (4pt);
      \draw (0,0) node {$c$};
      \draw (.65,.2) node {$u$};
      \draw (-.65,.2) node {$v$};
      \draw (0,-.4) node {$f_1$};
      \draw (0,.4) node {$f_3$};
      \draw (0,-1) node[anchor=west] {$w_1$};
      \draw (0,1) node[anchor=west] {$w_3$};
      \draw (0:2.1) node {$M_0$};
      \draw (280:2.25) node {$M_1$};
      \draw (180:2.1) node {$M_2$};
      \draw (80:2.25) node {$M_3$};
      
      \draw (0,-3) node {(ii)};
    \end{scope}
%
%
%
%
%
%
%
%
    
    \begin{scope}[yshift=-11cm]
      \filldraw[black!20] (90:1) -- (210:1) -- (330:1) -- cycle;

      \draw (90:1) -- (210:1) -- (330:1) -- cycle;
      \draw (30:1) -- (90:1) -- (150:1) -- (210:1) -- (270:1) -- (330:1) -- cycle;
      \draw (90:1) -- (90:2);
      \draw (30:1) -- (30:2.5);
      \draw (330:1) -- (330:2);
      \draw (270:1) -- (270:2.5);
      \draw (210:1) -- (210:2);
      \draw (150:1) -- (150:2.5);
      \draw (90:2) +(60:.5) -- +(0,0) -- +(120:.5);
      \draw (210:2) +(180:.5) -- +(0,0) -- +(240:.5);
      \draw (330:2) +(300:.5) -- +(0,0) -- +(0:.5);
      
      \filldraw (90:1) circle (2.5pt);
      \filldraw (210:1) circle (2.5pt);
      \filldraw (330:1) circle (2.5pt);
      \filldraw (30:1) circle (2pt);
      \filldraw (150:1) circle (2pt);
      \filldraw (270:1) circle (2pt);
      \filldraw (90:2) circle (2pt);
      \filldraw (210:2) circle (2pt);
      \filldraw (330:2) circle (2pt);
      \filldraw (30:2) circle (2pt);
      \filldraw (150:2) circle (2pt);
      \filldraw (270:2) circle (2pt);
      
      \draw (0,0) node {$c$};
      \draw (90:1) +(45:.25) node {$u$};
      \draw (330:1) +(285:.25) node {$v$};
      \draw (210:1) +(165:.25) node {$w$};
      \draw (90:2.6) node {$M_0$};
      \draw (20:2.25) node {$M_1$};
      \draw (330:2.6) node {$M_2$};
      \draw (260:2.25) node {$M_3$};
      \draw (210:2.6) node {$M_4$};
      \draw (140:2.25) node {$M_5$};
      
      \draw (0,-3.25) node {(iii)(a)};
    \end{scope}
    
    \begin{scope}[xshift=6.25cm,yshift=-11cm]
      \filldraw[black!20] (90:.5) -- (210:.5) -- (330:.5) -- cycle;

      \draw (90:.5) -- (210:.5) -- (330:.5) -- cycle;
      \draw (30:2) -- (150:2) -- (270:2) -- cycle;
      \draw (30:2) -- (90:.5) -- (150:2) -- (210:.5) -- (270:2) -- (330:.5) -- cycle;
      \draw (30:2) -- (90:1.5) -- (150:2) -- (210:1.5) -- (270:2) -- (330:1.5) -- cycle;
      \draw (30:2) -- (30:3.5);
      \draw (90:1.5) -- (90:3);
      \draw (150:2) -- (150:3.5);
      \draw (210:1.5) -- (210:3);
      \draw (270:2) -- (270:3.5);
      \draw (330:1.5) -- (330:3);
      
      \filldraw (90:.5) circle (2.5pt);
      \filldraw (210:.5) circle (2.5pt);
      \filldraw (330:.5) circle (2.5pt);
      \filldraw (30:2) circle (2pt);
      \filldraw (150:2) circle (2pt);
      \filldraw (270:2) circle (2pt);
      \filldraw (90:1.5) circle (2pt);
      \filldraw (210:1.5) circle (2pt);
      \filldraw (330:1.5) circle (2pt);
      \filldraw (30:3) circle (2pt);
      \filldraw (150:3) circle (2pt);
      \filldraw (270:3) circle (2pt);
      \filldraw (90:2.5) circle (2pt);
      \filldraw (210:2.5) circle (2pt);
      \filldraw (330:2.5) circle (2pt);
      
      \draw (0,0) node {$c$};
      \draw (90:.75) node {$u$};
      \draw (330:.75) node {$v$};
      \draw (210:.75) node {$w$};
      \draw (82:2.75) node {$M_0$};
      \draw (25:3.25) node {$M_1$};
      \draw (322:2.75) node {$M_2$};
      \draw (265:3.25) node {$M_3$};
      \draw (202:2.75) node {$M_4$};
      \draw (145:3.25) node {$M_5$};
      
      \draw (0,-4.25) node {(iii)(b)};
    \end{scope}
  \end{tikzpicture}
  \caption{The possible structures of a skeleton at a pole as stated in
    Lemma~\ref{lem:skeleton:poles}, with $k=1$ in the
    cases~\ref{poles:vertex}\ref{poles:v:edge}
    and~\ref{poles:face}\ref{poles:f:edge}.}
  \label{fig:skeleton:poles}
\end{figure}

Lemma~\ref{lem:skeleton:poles} describes the structure of the skeleton
at the poles. The following lemma deals with the intersections of
adjacent meridians between two segments.

\begin{lemma}\label{lem:skeleton:touching}
  Let $j\in\{1,\dotsc,s-1\}$ be fixed. Then there is a number $k_j\ge
  0$, such that for every $i$, the intersection of $M_i$ and $M_{i+1}$
  has a component that is a path of length $k_j$ from $w_j^i$ to
  $v_{j+1}^i$.
\end{lemma}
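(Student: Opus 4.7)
The plan is to prove the claim first for $i=0$ and then propagate it to general $i$ via the symmetry of the skeleton.

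For $i=0$: consider the sub-sequences of the central sequences of $M_0$ and of $M_1$ (from Lemma~\ref{lem:pre-girdle}) running from $w_j^0$ to $v_{j+1}^0$ in the direction toward the south pole. These two sub-sequences share their endpoints and together bound a region $R$ of the sphere lying in the lune between $M_0$ and $M_1$. I claim that $R$ contains no face of the skeleton $S$. Any such face would either be a segment---but by the enumeration of segments in north-to-south order, it would have to lie strictly between $f_j^0$ and $f_{j+1}^0$ and thus contradict the indexing of segments between $M_0$ and $M_1$; or it would be a central face of some meridian $M_r$, and by Lemma~\ref{lem:skeleton1}\ref{skeleton:central} a central face of $M_r$ can lie in the closed lune between $M_0$ and $M_1$ only if $r\in\{0,1\}$, in which case it already forms part of one of the two sub-sequences rather than intruding into the interior of $R$.

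Since $R$ is empty of skeletal faces, the two sub-sequences bounding $R$ must share their underlying $1$-skeleton, yielding a path $P_j^0$ of some length $k_j\ge 0$ from $w_j^0$ to $v_{j+1}^0$ that is contained in $M_0\cap M_1$. That this is a \emph{simple} path (rather than a tree or a cycle) follows from the linearity of each central sequence (Lemma~\ref{lem:pre-girdle}\ref{pre-girdle:notincident}) combined with the fact, guaranteed by Lemma~\ref{lem:skeleton1}\ref{skeleton:central}, that no cell away from the poles is central in both $M_0$ and $M_1$; thus each vertex and edge of $P_j^0$ is central in exactly one of the two meridians and outer in the other, and no branching or cycling can occur.

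To extend to arbitrary $i$: by Lemma~\ref{lem:skeleton1}\ref{skeleton:rotations}, the rotation $\rho_r$ maps $(M_0,M_1)$ to $(M_{2r},M_{2r+1})$, handling all even $i=2r$; for odd $i=2r+1$ one first applies the reflection $\varphi_0$ (Lemma~\ref{lem:skeleton1}\ref{skeleton:reflections}), which maps $(M_0,M_1)$ to $(M_{2n-1},M_0)$, and then applies $\rho_{r+1}$. Every automorphism in $H$ fixes the north pole and hence preserves the north-to-south ordering of segments, so it maps $f_j^0\mapsto f_j^i$, and in particular $w_j^0\mapsto w_j^i$ and $v_{j+1}^0\mapsto v_{j+1}^i$. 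The image of $P_j^0$ is therefore a path of the same length $k_j$ in the component of $M_i\cap M_{i+1}$ between $w_j^i$ and $v_{j+1}^i$.

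The main obstacle will be the claim that no central face of a meridian $M_r$ with $r\notin\{0,1\}$ intrudes into $R$. This requires a careful topological argument: one must show, using Lemma~\ref{lem:skeleton1}\ref{skeleton:central} together with Lemma~\ref{lem:skeleton:poles}, that the ``central arcs'' of the $2n$ meridians can be drawn as $2n$ pairwise disjoint topological arcs from the north pole to the south pole, separating the sphere into $2n$ open lunes; then $R$ lies inside the lune between $M_0$ and $M_1$, which is disjoint from the central cells of every other meridian.
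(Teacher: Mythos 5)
Your proof attempts considerably more than the paper's own one-line proof, which simply cites Lemma~\ref{lem:skeleton1}\ref{skeleton:reflections} and treats the path structure as essentially self-evident, relying on the reflections for the uniformity of $k_j$ over $i$. Your extension step (propagating from $i=0$ to all $i$ via the rotations $\rho_r$ and the reflection $\varphi_0$) is correct and amounts to the paper's argument. However, your argument for the base case $i=0$ has a genuine gap.

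The problem is in the construction of the region $R$ and the subsequent claims about it. You take ``the sub-sequences of the central sequences of $M_0$ and of $M_1$ running from $w_j^0$ to $v_{j+1}^0$.'' But $w_j^0$ lies in both $M_0$ and $M_1$, hence by Lemma~\ref{lem:girdlesmeet}\ref{girdles:central} it is a central cell of at most one of them, and possibly of neither. For instance, in the second configuration of Figure~\ref{fig:skeleton:touching} the vertex $w_j^1$ is a \emph{central} vertex of $M_2$ but only an \emph{outer} vertex (a diamond apex) of $M_1$, so the central sequence of $M_1$ does not pass through $w_j^1$ at all. Thus the ``sub-sequence from $w_j^0$ to $v_{j+1}^0$'' is not well-defined for both meridians simultaneously, and $R$ does not have the description you give it.

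Worse, the cells making up the intersection path $P_j^0$ are typically \emph{not} on the central sequences at all. Again in that same figure, the two edges of the intersection path are \emph{outer} edges in both $M_1$ and $M_2$ (each is a side of a diamond in each meridian), so they lie strictly in the interior of the lune, not on the bounding arcs of your region $R$. Consequently the step ``since $R$ is empty of skeletal faces, the two sub-sequences bounding $R$ must share their underlying $1$-skeleton, yielding a path'' does not produce $P_j^0$: the $1$-skeleton of a central sub-sequence that passes through a diamond contains the diamond's central edge, which joins the two apex vertices, not the consecutive central vertices, so it is not itself a path and does not coincide with the intersection $M_0\cap M_1$. The case in which $M_0$ and $M_1$ touch along a zigzag of interlocking diamonds---which is precisely what the lemma has to accommodate, and what forces $k_j>0$---is not handled by your region argument. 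A correct base-case argument would need to track the diamond faces and the outer cells explicitly, not just the central sequences and the faces they bound.
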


\begin{proof}
  This follows immediately from
  Lemma~\ref{lem:skeleton1}\ref{skeleton:reflections}.
\end{proof}

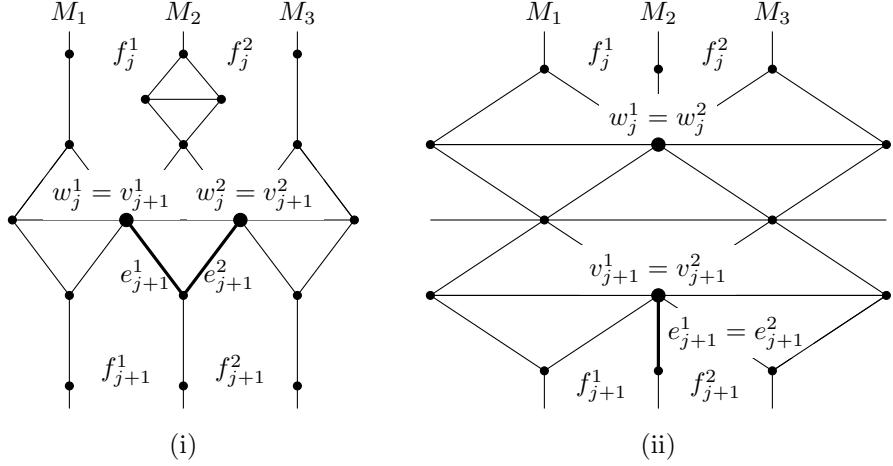
\begin{figure}[htbp]
  \centering
  \begin{tikzpicture}
    \begin{scope}
      \foreach \x in {0,1.5,3}
      {
        \draw (\x,0) -- (\x+.75,-1) -- (\x+1.5,0) -- (\x+.75,1) -- (\x,0) -- (\x+1.5,0);
      }
      \draw (.75,1) -- (.75,2.5);
      \draw (.75,-1) -- (.75,-2.5);
      \draw (2.25,-1) -- (2.25,-2.5);
      \draw (3.75,1) -- (3.75,2.5);
      \draw (3.75,-1) -- (3.75,-2.5);
      \draw (1.75,1.6) -- (2.25,1) -- (2.75,1.6) -- (2.25,2.2) -- (1.75,1.6) -- (2.75,1.6);
      \draw (2.25,2.2) -- (2.25,2.5);

      \draw (.75,2.75) node {$M_1$};
      \draw (2.25,2.75) node {$M_2$};
      \draw (3.75,2.75) node {$M_3$};
      \draw (1.5,2.2) node {$f_j^1$};
      \draw (3,2.2) node {$f_j^2$};
      \draw (1.5,-2) node {$f_{j+1}^1$};
      \draw (3,-2) node {$f_{j+1}^2$};
      \draw (1.5,0) node[rectangle,fill=white,anchor=300] {$w_j^1=v_{j+1}^1$};
      \draw (3,0) node[rectangle,fill=white,anchor=240] {$w_j^2=v_{j+1}^2$};
      \draw (1.75,-.75) node {$e_{j+1}^1$};
      \draw (2.85,-.75) node {$e_{j+1}^2$};
      
      \foreach \x in {0,1.5,3}
      {
        \filldraw (\x,0) circle (1.5pt);
        \filldraw (\x+1.5,0) circle (1.5pt);
        \filldraw (\x+.75,-1) circle (1.5pt);
        \filldraw (\x+.75,1) circle (1.5pt);
        \filldraw (\x+.75,-2.2) circle (1.5pt);
        \filldraw (\x+.75,2.2) circle (1.5pt);
      }
      \filldraw (1.75,1.6) circle (1.5pt);
      \filldraw (2.75,1.6) circle (1.5pt);

      \draw (0,0) -- (.75,1);
      \draw (4.5,0) -- (3.75,1);
      \draw[very thick] (1.5,0) -- (2.25,-1) -- (3,0);
      \filldraw (1.5,0) circle (2.5pt);
      \filldraw (3,0) circle (2.5pt);

      \draw (2.25,-3) node {(i)};
    \end{scope}

    \begin{scope}[xshift=5.5cm]
      \foreach \x in {1.5,4.5}
      {
        \draw (\x,2) -- (\x,2.5);
        \draw (\x,-2) -- (\x,-2.5);
        \foreach \y in {-1,1}
          \draw (\x-1.5,\y) -- (\x,\y-1) -- (\x+1.5,\y) -- (\x,\y+1) -- (\x-1.5,\y) -- (\x+1.5,\y);
        \filldraw (\x,2) circle (1.5pt);
        \filldraw (\x-1.5,1) circle (1.5pt);
        \filldraw (\x+1.5,1) circle (1.5pt);
        \filldraw (\x,0) circle (1.5pt);
        \filldraw (\x-1.5,-1) circle (1.5pt);
        \filldraw (\x+1.5,-1) circle (1.5pt);
        \filldraw (\x,-2) circle (1.5pt);
      }
      \draw (0,0) -- (6,0);
      \draw (3,1) -- (3,2.5);
      \draw (3,-1) -- (3,-2.5);
      \filldraw (3,2) circle (1.5pt);
      \filldraw (3,-2) circle (1.5pt);

      \draw (1.5,2.75) node {$M_1$};
      \draw (3,2.75) node {$M_2$};
      \draw (4.5,2.75) node {$M_3$};
      \draw (2.25,2.2) node {$f_j^1$};
      \draw (3.75,2.2) node {$f_j^2$};
      \draw (2.25,-2.2) node {$f_{j+1}^1$};
      \draw (3.75,-2.2) node {$f_{j+1}^2$};
      \draw (3,1) node[rectangle,fill=white,anchor=270] {$w_j^1=w_j^2$};
      \draw (3,-1) node[rectangle,fill=white,anchor=270] {$v_{j+1}^1=v_{j+1}^2$};
      \draw (3,-1.5) node[rectangle,fill=white,anchor=180] {$e_{j+1}^1=e_{j+1}^2$};

      \draw (0,1) -- (6,1);
      \draw (0,-1) -- (6,-1) -- (4.5,-2);
      \draw[very thick] (3,-1) -- (3,-2);
      \filldraw (3,1) circle (2.5pt);
      \filldraw (3,-1) circle (2.5pt);

      \draw (3,-3) node {(ii)};
    \end{scope}
  \end{tikzpicture}
  \caption{The structure of a skeleton between two segments as
    described in Lemma~\ref{lem:skeleton:touching}. In Case~(i), we
    have $k_j=0$, in Case~(ii) $k_j=2$.}
  \label{fig:skeleton:touching}
\end{figure}

All possible skeletons can thus be constructed by first choosing the
numbers $n\ge 2$ and $s$ and the dimensions of the poles. Note that a
pole can only be an edge if $n=2$ and it can only be a face if $n=3$.
Then choose the structure at the poles according to
Lemma~\ref{lem:skeleton:poles} and between the segments according to
Lemma~\ref{lem:skeleton:touching}.

All triangulations with both reflective and rotative symmetry can
be obtained by first taking a skeleton and then inserting the
same near-triangulation in each type of segment according to
Lemma~\ref{lem:skeleton2}. Similarly to the case of reflective symmetries,
the near-triangulation inserted into a segment is only allowed to
have chords that do not produce double edges by reflecting. In this
case, this means that for every chord of the near-triangulation and
each meridian bounding the corresponding segment, not both end
vertices of the chord are contained in the meridian and central in
it. More details about this construction will be given in
Section~\ref{sec:constr:both}.

\section{Constructions}\label{sec:constructions}

In this section we formalise the constructive decompositions developed
in the previous sections and show how to construct the basic graphs
arising in them: girdles, fyke nets, and skeletons.

\subsection{Reflective symmetries}\label{sec:constr:reflective}

The construction of all possible girdles is rather easy. Once the
length $\ell$ of the girdle and the number $d$ of diamonds are fixed,
all that is left is to consider all arrangements of $d$ diamonds on a
girdle of length $\ell$. Note that $d\le\frac{\ell}{2}$ is necessary;
in the case of $c_0$ being a face, we furthermore have $d\ge 1$.

Let a girdle $G$ be given. The near-triangulations that can be
inserted into the sides of $G$ in order to give rise to a
triangulation with reflective symmetry have to satisfy the conditions
of Lemma~\ref{lem:girdle:sides}. In particular, the distribution of
chords is restricted.

\begin{definition}
  Let $N$ be a near-triangulation and let $D$ be a subset of its set
  of outer vertices. We call $N$ \emph{chordless outside $D$} if every
  chord of $N$ has at least one end vertex in $D$.

  More generally, let a cycle
  $C$ with a root vertex $v_C$ and a root edge $e_C$ incident with
  $v_C$ be given and let $D_C$ be a set of vertices in $C$. Suppose
  that the length of $C$ is the same as the number of outer vertices
  of $N$ and let $\alpha$ be the unique isomorphism from $C$ to the
  boundary $C_N$ of the outer face of $N$ that maps $v_C$ to the root
  vertex $v_N$ of $N$ and $e_C$ to the root edge $e_N$ of $N$. We call
  $N$ \emph{chordless outside $D_C$} if it is chordless outside
  $\alpha(D_C)$.
\end{definition}

Recall that $j(G)$ is the smallest index for which $c_{j(G)}$ is a
vertex and let $v_1,v_2,e_1,e_2$ be given as in
Lemma~\ref{lem:girdle}\ref{girdle:inside}. Denote by $C_G$ the cycle
in $G$ bounding $f_1$ and let $D_G$ be the set of outer vertices of
$G$ in $C$. With this notation, Lemmas~\ref{lem:girdle}
and~\ref{lem:girdle:sides} give rise to the following.

\begin{theorem}\label{thm:reflective}
  The triangulations $T$ with a reflective symmetry in $\Aut(c_0,T)$
  are precisely the ones that can be constructed by choosing
  \begin{itemize}
  \item a girdle $G$ that contains $c_0$ as a central cell and
  \item a near-triangulation $N$ that is chordless outside $D_G$
  \end{itemize}
  and inserting a copy of $N$ into each side $f_1$ (respectively $f_2$)
  of $G$ at $v_1$ and $e_1$ (respectively at $v_2$ and $e_2$).
\end{theorem}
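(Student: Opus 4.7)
The plan is to observe that Theorem~\ref{thm:reflective} is essentially the combination of Lemma~\ref{lem:girdle} and Lemma~\ref{lem:girdle:sides}, packaged as a bijective correspondence; so I will verify both directions.

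For the forward direction, I start with a triangulation $T$ admitting a reflection $\varphi\in\Aut(c_0,T)$. Lemma~\ref{lem:girdle} hands me the girdle $G$ of $T$ with respect to $\varphi$, which has $c_0$ as a central cell by the very construction of the cyclic sequence in Lemma~\ref{lem:pre-girdle} (which starts at $c_0$). The same lemma produces the two near-triangulations $N_1,N_2$ induced on the sides of $G$ at the designated roots $(v_1,e_1)$ and $(v_2,e_2)$, and part~\ref{girdle:isom} gives an isomorphism $\varphi\colon N_1\to N_2$. Taking $N:=N_1$ and identifying $N_2$ with $N$ via this isomorphism, Lemma~\ref{lem:girdle:sides} guarantees that the number of outer vertices of $N$ matches the length of $G$ and that $N$ is chordless outside $D_G$. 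Hence $T$ is precisely the result of the construction applied to $(G,N)$.

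For the reverse direction, let $(G,N)$ be chosen as in the statement and form $T$ by inserting a copy of $N$ into each side of $G$ at the prescribed roots. The proof of Lemma~\ref{lem:girdle:sides} already shows that $T$ is a simple triangulation: all faces are triangular because both $G$ and $N$ contribute only triangular inner faces, and the chordless-outside-$D_G$ hypothesis prevents any chord of $N$ in $f_1$ from creating a double edge with the mirror chord in $f_2$ (such a double edge can arise only when both endpoints of a chord are central vertices of $G$, which is exactly what is forbidden). It remains to exhibit a reflective automorphism $\varphi\in\Aut(c_0,T)$. Because $G$ occurs as the girdle of some triangulation, it carries a canonical reflective symmetry $\psi_G$ that fixes every central cell (in particular $c_0$) and swaps the two outer cells within each diamond; I extend $\psi_G$ to $\varphi\colon T\to T$ by sending the copy of $N$ inserted in $f_1$ to the copy in $f_2$ via the identification provided by the matched rootings $(v_1,e_1)$ and $(v_2,e_2)$, and vice versa.

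The main point to check is that $\varphi$ is well-defined and preserves incidences across the boundary $C_G$ of $f_i$; this is where the compatibility between $\psi_G$ and the rootings of the two copies of $N$ matters. Since $v_1=v_2=c_{j(G)}$, both copies of $N$ are glued along their root vertices to the same cell of $G$, and $\psi_G$ acts as the identity on $v_1$; similarly, either $e_1=e_2$ (when $c_{j(G)+1}$ is an edge) or $e_1,e_2$ are the two outer edges of the diamond containing $c_{j(G)+1}$ and are swapped by $\psi_G$, matching exactly how the copies of $N$ are exchanged by $\varphi$. From this compatibility one reads off that $\varphi$ is an automorphism of $T$; it fixes $c_0$ because $\psi_G$ does, and it reverses the cyclic order of cells incident with $c_0$ because $\psi_G$ does, so $\varphi$ is a reflection in $\Aut(c_0,T)$. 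The only real obstacle is this boundary-matching verification, and it reduces, as indicated, to the two explicit cases for $c_{j(G)+1}$.
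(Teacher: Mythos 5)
Your proof is correct and follows exactly the route the paper intends: the paper states only that Theorem~\ref{thm:reflective} ``arises from'' Lemmas~\ref{lem:girdle} and~\ref{lem:girdle:sides} and gives no separate argument, and your write-up is precisely that derivation, with the one additional detail of explicitly exhibiting the reflection $\varphi$ of the reconstructed $T$ (a point the paper's proof of Lemma~\ref{lem:girdle:sides} leaves implicit in the phrase ``it is the desired triangulation~$T$'').
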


\begin{remark}
  Since the girdle $G_{\varphi}$ of a triangulation $T$ with respect
  to a given reflection $\varphi$ is only unique up to orientation,
  some triangulations have two ways of constructing them by inserting
  near-triangulations into the sides of a girdle. More precisely, the
  construction in Theorem~\ref{thm:reflective} is a 1-1 correspondence
  if and only if there is another reflection $\psi\not=\varphi$ in
  $\Aut(c_0,T)$ that fixes $G_{\varphi}$. By Theorem~\ref{thm:aut},
  this is equivalent to $\Aut(c_0,T)$ being isomorphic to $D_n$ with
  $n$ even. For all other triangulations, the construction is a
  1-2 correspondence, i.e.\ all triangulations $T$ with
  $\Aut(T,c_0)\simeq \ZZ_2$ can be constructed in precisely two
  different ways.
\end{remark}

\subsection{Rotative symmetries}\label{sec:constr:rotative}

As opposed to girdles, the construction of a graph $F$ that can serve
as a fyke net requires several steps. Suppose that the
desired order $m$ of the automorphism group is already given, as well
as the dimensions of the poles and the number $k+1$ of layers
$H_0,\dotsc,H_k$. We construct $F$ in the following steps.
\begin{itemize}
\item Choose the layer $H_0$ to be a cycle depending on the dimension
  of the north pole $c_0$ like in Figure~\ref{fig:invariantcycles}.
  If $c_0$ is a vertex, let $am$ be the length of $H_0$, otherwise we
  put $a=1$.
\item For $i=1,\dotsc,k-1$, let $C_i$ be a cycle whose length is a
  multiple of $m$. These cycles will serve as the centres of the
  layers. The choice of $C_k$ depends on the dimension of the south
  pole $c_1$: if $c_1$ is a vertex, then $C_k$ only consists of $c_1$
  and the layer $H_k$ will be antarctic. If $c_1$ is an edge, $C_k$
  can either be a cycle of even length, in which case $H_k$ will be
  pseudo-antarctic, or the edge $c_1$ itself, in which case $H_k$ will
  be antarctic. Finally, if $c_1$ is a face, then $C_k$ has to be a
  cycle whose length is divisible by $3$. In that case, $H_k$ will be
  antarctic if $H_k$ is a triangle and pseudo-antarctic otherwise.
\item Choose the bases $u_0^0,\dotsc,u_{am-1}^0$ in $H_0$ in a
  clockwise order like in Definition~\ref{def:extendedspindle}: if
  $c_0$ is an edge, then choose two opposite vertices as
  $u_0^0,u_1^0$ (the other two will then be the end vertices of
  $c_0$), otherwise choose all vertices of $H_0$.
\item Choose the sources $v_0^0,\dotsc,v_{am-1}^0$ as follows: For
  $j=0,\dotsc,a-1$, choose $v_j^0$ to be a vertex on the path
  starting at $u_j^0$ and running along $H_0$ in clockwise direction
  around the north pole to the predecessor of $u_{j+a}^0$ so that $v_1^0,\dotsc,v_a^0$
  appear in clockwise order on $H_0$, but are not necessarily
  distinct. The remaining sources $v_a^0,\dotsc,v_{am-1}^0$ are obtained by recursively applying
  the rotative symmetry $\varphi$. The set of sources in $H_0$ is
  denoted by $S_0$.
\item Recursively for $i=1,\dotsc,k$, choose the bases
  $u_0^i,\dotsc,u_{am-1}^i$ and the sources $v_0^i,\dotsc,v_{am-1}^i$ on
  $C_i$ as follows: if $C_i$ has length $a_im$, pick a subpath
  consisting of $a_i$ vertices and choose $u_0^i,\dotsc,u_{a-1}^i$ from
  this subpath so that they appear in clockwise order on $C_i$. Again,
  the bases do not have to be distinct. Furthermore, if the sources
  $v_l^{i-1}$ and $v_{l+1}^{i-1}$ were identical, the corresponding
  bases $u_l^i$ and $u_{l+1}^i$ should also be identical. The other bases follow again by
  applying symmetry. After choosing the bases, we can pick the sources
  like in the previous step (but note that we do not need any sources
  for $i=k$). We denote the sets of bases and sources in $C_i$ by
  $B_i$ and $S_i$, respectively.
\item Having fixed the bases and sources for every $i$, we can now
  extend the $C_i$ to layers $H_i$. To that end, we need to add
  a suitable plane cactus at every base in $C_i$. For every $i$,
  denote by $\pi_i\colon S_{i-1} \to B_i$ the function that maps $v$
  to $u$ if there is a $j$ with $v=v_j^{i-1}$ and $u=u_j^i$. For every
  base $u$ in $C_i$, we now attach a plane cactus at $u$ that
  has at most as many maximal blocks in its natural order as $u$ has
  preimages under $\pi_i$. Again, it is sufficient
  to choose the cacti for the first $a$ bases, the others are isomorphic
  by symmetry.
\item Finally, we choose the targets and the \liaisonp. For every
  $u\in B_i$ and every $v_j^{i-1} \in (\pi_i)^{-1}(u)$, we choose a vertex in
  the cactus at $u$ to be the target $w_j^i$ for the \liaisons\ $v_j^{i-1}w_j^i$
  according to the following rules:
  \begin{itemize}
  \item The targets are arranged in clockwise order for increasing index
    $j$ and
  \item every maximal block has at least one vertex that does not belong
    to any other block and is chosen as a target.
  \end{itemize}
\end{itemize}

The near-triangulations contained
in a segment and its boundary are characterised by
Lemma~\ref{lem:spindle:segments}.

\begin{definition}
  Let $C$ be a cycle that consists of a path $P_v$ from $v_1$ to
  $v_2$, a path $P_w$ from $w_1$ to $w_2$, and two edges $v_1w_1,
  v_2w_2$. Denote the length of $C$ by $\ell$ and let $u$ be a vertex
  on $P_v$. We choose $v_1$ as the root vertex of $C$ and $v_1w_1$ as
  the root edge. If $N$ is a near-triangulation with $\ell$ outer
  edges, root vertex $v_N$, and root edge $e_N$, let us denote by
  $\alpha$ the isomorphism from $C$ to the boundary of the outer face
  of $N$ that respects the rooting. We call $N$ \emph{$2$-layered with
  respect to $v_1$, $v_2$, $w_1$, $w_2$, and $u$} if it satisfies
  Conditions~\ref{spindle:segment:nochords}--\ref{spindle:segment:rightmost}
  of Lemma~\ref{lem:spindle:segments} with $v_j^i:=\alpha(v_1)$,
  $v_{j+1}^i:=\alpha(v_2)$, $w_j^{i+1}:=\alpha(w_1)$, $w_{j+1}^{i+1}:=\alpha(w_2)$,
  and $u_{j+1}^i:=\alpha(u)$.
\end{definition}

If $T$ is a triangulation with rotative symmetry and $f$ is a segment
of its fyke net $F$, then by Lemma~\ref{lem:spindle:segments} we
have $f=f_j^i$ for some $i,j$ and $N_j^i$ is $2$-layered with respect
to $v_j^i$, $v_{j+1}^i$, $w_j^{i+1}$, $w_{j+1}^{i+1}$, and
$u_{j+1}^i$. Here the boundary of $f_j^i$ is rooted at $v_j^i$ and
$v_j^iw_j^{i+1}$. For every leaf $f$ of the fyke net we choose
the root vertex $v_f$ of its boundary to be its vertex closest to the
base of the branch it is contained in. As root edge $e_f$ we choose
the left of the two edges at $v_f$ on this boundary. Finally, if $F$
has pseudo-antarctic faces, we choose for each such face $f$ the root
edge $e_f$ to be either the south pole $c_1$ (if it is an edge) or the
unique edge incident to both $f$ and $c_1$ (if $c_1$ is a face). As
the root vertex $v_f$ we choose the end vertex of $e_f$ that lies in
clockwise direction from $e_f$ around $f$.

\begin{theorem}\label{thm:rotative}
  The triangulations $T$ with rotative symmetries in $\Aut(c_0,T)$
  are precisely the ones that can be constructed by choosing
  \begin{itemize}
  \item a fyke net $F$,
  \item for every isomorphism class $[f]$ (under rotation) of leaves
    of $F$ a strongly rooted near-triangulation whose boundary is
    isomorphic to the boundaries of those leaves with respect to the
    rooting,
  \item for every isomorphism class $[f]$ of pseudo-antarctic faces
    of $F$ a strongly rooted near-triangulation whose boundary is
    isomorphic to the boundaries of those faces with respect to the
    rooting, and
  \item for every isomorphism class $\{f_j^i,f_{j+a}^i,\dotsc,f_{j+(m-1)a}^i\}$
    of segments of $F$ a $2$-layered near-triangulation with respect to
    $v_j^i$, $v_{j+1}^i$, $w_j^{i+1}$, $w_{j+1}^{i+1}$, and
    $u_{j+1}^i$
  \end{itemize}
  and inserting a copy of each near-triangulation into the corresponding
  faces of $F$ at their root vertices and edges.
  This construction is a 1-1 correspondence.
\end{theorem}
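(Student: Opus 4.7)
The plan is to prove both directions of the correspondence, showing that each triangulation $T$ with rotative symmetry gives rise to exactly one tuple of data and that each valid tuple produces exactly one such $T$. I will treat the two directions separately, and then check invertibility.

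For the forward direction, I would start with a triangulation $T$ such that $\Aut(c_0,T)$ contains a rotation of order $m$, fix the subgroup $H$ generated by such a rotation $\varphi$, and invoke the constructions of Section~\ref{sec:rotative} to obtain the fyke net $F$ of $T$. By Lemma~\ref{lem:spindle} applied inductively through the levels (via Lemma~\ref{lem:cactus} and Construction~\ref{constr:liaison}), for every face $f$ of $F$ the pair $(T,f)$ induces a unique near-triangulation $N_f$ at the rooting inherited from $F$. Lemma~\ref{lem:spindle:segments} immediately guarantees that each $N_{f_j^i}$ satisfies all the conditions of being $2$-layered, while for leaves and pseudo-antarctic faces no further restrictions need to be checked because chords do not create obstructions there (as noted just before Section~\ref{sec:both}). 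Finally, because $\varphi$ permutes the faces of $F$ in orbits of size $m$ and is an isomorphism of $T$, all near-triangulations in the same orbit are isomorphic with respect to their rootings. This shows that $T$ gives rise to a well-defined tuple $(F, (N_{[f]})_{[f]})$.

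For the reverse direction, I would take a fyke net $F$ together with the prescribed near-triangulations, insert a copy of each chosen near-triangulation into every face in the corresponding orbit using the roots specified just before the theorem, and verify that the resulting planar map $T$ is a triangulation, carries a rotative symmetry of order $m$, and has $F$ as its fyke net. All inner faces of the inserted near-triangulations are triangles and the boundaries match by construction, so $T$ is triangular; the absence of double edges in segments follows from the $2$-layered property (Lemma~\ref{lem:spindle:segments}\ref{spindle:segment:nochords} and~\ref{spindle:segment:order2}), while in leaves and pseudo-antarctic faces the boundary structure of $F$ rules out repeated edges by the same argument used in the proof of Lemma~\ref{lem:girdle:sides}. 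The rotation $\varphi$ of $F$ extends to $T$ because the same near-triangulation is inserted at symmetric roots in every orbit, giving an element of $\Aut(c_0,T)$ of order $m$.

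The main obstacle, and the step I would spend most care on, is showing that the fyke net computed from the constructed $T$ coincides with the $F$ we started with. This requires verifying that the levels $F_0,\dotsc,F_k$ of $T$ agree with the layers $H_0,\dotsc,H_k$ of $F$, and that the sources, targets, and bases produced by the leftmost-edge rule in Construction~\ref{constr:liaison} inside $T$ reproduce those of $F$. The level identification proceeds by induction on $i$: the base case is immediate from the definition of $F_0$; for the inductive step, Lemma~\ref{lem:cactus} applied to $C_i$ (the centre of $H_i$) together with the fact that every edge of $P_{i+1}$ bounds a face with third vertex on $P_i$ (Lemma~\ref{lem:spindle:segments}\ref{spindle:segment:faces}) forces the resulting plane symmetric cactus to be exactly $H_{i+1}$. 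For the sources and targets, I would exploit Lemma~\ref{lem:spindle:segments}\ref{spindle:segment:rightmost}: the property that no vertex strictly between $u_{j+1}^i$ and $v_{j+1}^i$ on $P_i$ is adjacent to a vertex of $F_{i+1}$ forces the leftmost-edge rule applied in $T$ to pick exactly $v_j^iw_j^{i+1}$ as in $F$.

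Putting the two directions together yields a bijection: applying the forward construction to the triangulation produced in the reverse direction recovers the original tuple because the faces and their root cells of $F$ are preserved, and applying the reverse construction to the tuple extracted from a triangulation $T$ recovers $T$ by the uniqueness of insertion. This establishes the claimed 1-1 correspondence.
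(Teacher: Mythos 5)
The paper itself does not include a formal proof of Theorem~\ref{thm:rotative}; it is stated as a consequence of the machinery developed in Section~\ref{sec:rotative} (Lemmas~\ref{lem:cactus}, \ref{lem:spindle}, \ref{lem:spindle:segments}, Construction~\ref{constr:liaison}, Definition~\ref{def:extendedspindle}) and the remarks on admissible near-triangulations just preceding Section~\ref{sec:constr:rotative}. Your proposal supplies the argument that the paper leaves implicit, and it does so following exactly the paper's intended route: decompose via the unique fyke net, characterize the pieces via Lemma~\ref{lem:spindle:segments}, and show the insertion operation is the inverse. Your identification of the genuinely nontrivial step---proving that the fyke net extracted from the reconstructed $T$ equals the $F$ one started from---is the right place to spend care, and your sketch (inducting on levels using Lemma~\ref{lem:cactus} and using \ref{spindle:segment:rightmost} and \ref{spindle:segment:faces} to force the leftmost-edge rule to pick the original \liaisonp) is the argument the paper's framework supports.

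Two small points worth tightening. First, you implicitly fold condition~\ref{spindle:segment:order2} into ``$2$-layered,'' whereas the paper's definition of $2$-layered only covers~\ref{spindle:segment:rightmost}--\ref{spindle:segment:nochords}; the $m=2$ safeguard is stated separately in the discussion at the end of Section~\ref{sec:rotative}, so you should reference it explicitly rather than treating it as part of the $2$-layered property. Second, the theorem is really a family of 1-1 correspondences, one for each fixed cyclic group $H$ of order $m$ (equivalently, one for each choice of $m \mid d(c_0)$ and a generator convention), since both the fyke net and the orbit decomposition of faces depend on $H$. You fix $H$ at the start, which is correct, but it is worth stating clearly that the asserted bijection is between triangulations with $H \subseteq \Aut(c_0,T)$ and tuples built from a fyke net of order $m$, rather than a single global bijection over all $m$ at once.
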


\subsection{Reflective and rotative symmetries}\label{sec:constr:both}

The graphs that can serve as a skeleton of a triangulation can be
constructed as follows. Suppose that the number $n$ of reflections is
given. Then we can choose
\begin{itemize}
\item the number $s$ of isomorphism classes of segments of the
  skeleton,
\item the structure of the skeleton at the poles according to
  Lemma~\ref{lem:skeleton:poles},
\item the numbers $k_1,\dotsc,k_{s-1}$ from
  Lemma~\ref{lem:skeleton:touching}, and
\item the distances of $v_j^1$ and $w_j^1$ on $M_1$ and on $M_2$ for
  every $j=1,\dotsc,s$ as well as the number and distribution of
  diamonds on these meridians between this two vertices. For arbitrary
  $i$, the structure of $M_i$ at the boundaries of the segments is
  identical to that of $M_1$ or $M_2$, depending of the parity of $i$,
  by Lemma~\ref{lem:skeleton1}\ref{skeleton:meridians}.
\end{itemize}

The near-triangulations that can be inserted into a segment are
similar to those that can be inserted into a side of a girdle: if
such a near-triangulation had a chord both of whose end vertices are
central cells of the same meridian, then applying the reflection that
corresponds to that meridian shows that there is a double edge, a
contradiction. In other words, a chord is only allowed if its end
vertices are not in the same meridian or if they are in the same
meridian, but at least one of them is an outer vertex of that
meridian.

\begin{definition}
  Let $N$ be a near-triangulation with root vertex $v_N$ and root edge
  $e_N$. Suppose that a vertex $w_N\not=v_N$ is fixed, then the
  boundary of $N$ is the union of two paths from $v_N$ to $w_N$;
  denote the path that contains $e_N$ by $R$ and the other path by
  $L$. We call $L$ and $R$ the \emph{sides} of the boundary. If vertex
  sets $D_L$ and $D_R$ on $L$ and $R$ are given, we call $N$
  \emph{$2$-sided chordless outside $D_L$ and $D_R$} if every chord of
  $N$ whose end vertices both lie on $L$ or both lie on $R$ has least
  one end vertex in $D_L$ or in $D_R$, respectively.

  More generally, let a cycle $C$ with a root vertex $v_C$ and a root
  edge $e_C$ incident with $v_C$ be given. If $w_C\not=v_C$ is given,
  let us define subpaths $L_C$ and $R_C$ as before. Suppose that
  $D_{L_C}$ and $D_{R_C}$ are sets of vertices on $L_C$ and $R_C$,
  respectively. If there is an isomorphism $\alpha$ from $C$ to the
  boundary of $N$ that respects the rooting and maps $w_C$ to $w_N$,
  then we call $N$ \emph{$2$-sided chordless outside $D_{L_C}$ and
  $D_{R_C}$} if it is $2$-sided chordless outside $\alpha(D_{L_C})$
  and $\alpha(D_{R_C})$.
\end{definition}

Consider a segment $f_j^0$ of the skeleton, let $C_j$ be its boundary.
The two sides of $C_j$ are its intersections $R_j$ with $M_0$ and
$L_j$ with $M_1$, the set $D_{R_j}$ (respectively $D_{L_j}$) is the
set of outer vertices of $M_0$ on $R_j$ (respectively of $M_1$ on $L_j$).
The near-triangulations that can be inserted into $f_j^0$ are
precisely those that are $2$-sided chordless outside $D_{L_j}$ and
$D_{R_j}$.
We thus have the following characterisation of triangulations with
both reflective and rotative symmetries.

\begin{theorem}\label{thm:both}
  The triangulations $T$ for which $\Aut(c_0,T)$ has a subgroup $H$
  isomorphic to $D_n$ are precisely those that can be
  constructed by choosing
  \begin{itemize}
  \item a skeleton $S_H$ and
  \item for every $j=1,\dotsc,s$, a near-triangulation $N_j$ that is $2$-sided
    chordless outside $D_{L_j}$ and $D_{R_j}$,
  \end{itemize}
  and inserting a copy of $N_j$ into $f_j^i$ at $v_j^i$ and $e_j^i$
  for every $j=1,\dotsc,s$ and $i=0,\dotsc,2n-1$.
\end{theorem}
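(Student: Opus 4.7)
The plan is to prove both implications of Theorem~\ref{thm:both} separately. The forward direction extracts the skeleton canonically from the automorphism structure and then adapts the chord-based contradiction of Lemma~\ref{lem:girdle:sides} to each meridian. The backward direction is analogous to the sufficiency arguments for Theorems~\ref{thm:reflective} and~\ref{thm:rotative}, requiring a combination of both kinds of checks.

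\textbf{Necessity.} Let $T$ have a subgroup $H\cong D_n$ in $\Aut(c_0,T)$. The girdles $G_0,\dotsc,G_{n-1}$ are canonically determined, hence so is the skeleton $S_H$. By Lemma~\ref{lem:skeleton2} every segment $f_j^0$ induces a near-triangulation $N_j:=N_j^0$, with all copies $N_j^i$ isomorphic to it. To show $N_j$ is $2$-sided chordless outside $D_{L_j}$ and $D_{R_j}$, suppose $N_j$ has a chord $e=xy$ with both end vertices on $R_j$ and central in $M_0$. By Lemma~\ref{lem:pre-girdle}\ref{pre-girdle:invariant} applied to $G_0$, the vertices $x,y$ are invariant under $\varphi_0$, so $\varphi_0(e)$ has the same end vertices. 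Since $e$ is an inner edge of a segment, Lemma~\ref{lem:skeleton1}\ref{skeleton:central} shows $e$ lies in no meridian, hence $\varphi_0(e)\neq e$, producing a double edge and contradicting simplicity. The analogous argument with $\varphi_1$ and $M_1$ rules out the remaining chord type.

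\textbf{Sufficiency.} Given a valid pair $(S_H, N_1,\dotsc,N_s)$, form $T$ by inserting the prescribed copy of $N_j$ into every $f_j^i$ at $v_j^i$ and $e_j^i$. Every inner face of $N_j$ is a triangle, and by Lemmas~\ref{lem:skeleton:poles} and~\ref{lem:skeleton:touching} the only non-segment faces of $S_H$ remaining in $T$ are the poles (when they are faces), so $T$ is a triangulation. A double edge in $T$ must have both end vertices in $S_H$ (since each $N_j$ is simple and its outer cycle is identified with the cycle bounding the segment), and the structure of $S_H$ forces the two duplicate edges to arise from chords of the two copies of $N_j$ inserted into the segments flanking a common central portion of some meridian. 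Such chords would have both end vertices on the same side ($R_j$ or $L_j$) of $N_j$'s boundary and both central in the meridian corresponding to that side, a configuration excluded precisely by the $2$-sided chordless condition. Hence $T$ is simple. To extend $H$ to $T$, observe that every $\sigma\in H$ permutes the segments within orbits by Lemma~\ref{lem:skeleton1}\ref{skeleton:reflections} and~\ref{skeleton:rotations}, preserving $j$-indices and rooting data $(v_j^i,e_j^i)$; because the rooting conventions place $e_j^i$ on $M_i$ for even $i$ and on $M_{i+1}$ for odd $i$, reflections map rooted segment boundaries to rooted segment boundaries with compatible cyclic traversals. Consequently $\sigma$ extends across each inserted $N_j$ via the identity isomorphism on $N_j$, and these extensions agree on the meridians because $\sigma$ already acts on $S_H$. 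The resulting subgroup of $\Aut(c_0,T)$ is isomorphic to $D_n$.

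\textbf{Main obstacle.} The principal difficulty is the simplicity check in the sufficiency direction, which requires tracking how chords in different copies of $N_j$ interact along shared portions of meridians and verifying that the $2$-sided chordless condition is exactly the combinatorial constraint needed. A secondary delicate point is confirming that the alternating rooting convention for segments is compatible with the orientation reversal induced by reflections, so that the identity-on-$N_j$ extension really glues consistently. Once these two structural checks are in place, the remaining verifications follow directly from the symmetry of the construction.
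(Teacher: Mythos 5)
Your proposal is correct and takes essentially the same approach the paper implicitly relies on. The paper does not write out a formal proof of Theorem~\ref{thm:both}, leaving it as a consequence of Lemmas~\ref{lem:skeleton1}--\ref{lem:skeleton:touching} and the informal discussion at the end of Sections~\ref{sec:both} and~\ref{sec:constr:both}; your write-up supplies the necessity argument via reflecting chords through $\varphi_0,\varphi_1$ and the sufficiency argument (simplicity plus automorphism extension) along the same lines, correctly identifying that the alternating rooting convention for $e_j^i$ is what compensates for the orientation reversal of reflections.
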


\begin{remark}
  The construction from Theorem~\ref{thm:both} is a 1-2
  correspondence. Indeed, the skeleton $S$ with respect to
  $\Aut(c_0,T)$ is unique up to the enumeration of its meridians. By
  Lemma~\ref{lem:skeleton1}\ref{skeleton:meridians}, the
  $|\Aut(c_0,T)|$ many choices for the enumeration result in two
  different decompositions. Since the choice of $S_H$ only depends
  on which meridian of $S$ is chosen as $M_0$ for $S_H$, we have
  shown that all triangulations $T$ with $\Aut(T,c_0)\supseteq
  H\simeq D_n$ can be constructed in precisely two different ways.
\end{remark}

\section{Discussion and outlook}\label{sec:discussion}

The constructive decomposition presented in this paper is the key to
enumerate triangulations with specific symmetries (and to sample them
uniformly at random, based on a recursive method~\cite{recursive} or
on Boltzmann sampler~\cite{PolyaBoltzmann,boltzmann}). For
this end, it will be necessary to translate the decomposition into
functional equations for the cycle index sums~\cite{Robinson70,Walsh82} that enumerate these
triangulations and the basic structures arising in their
decomposition. This will be done in another paper~\cite{cubicplanar}.

In Section~\ref{sec:constructions} we showed how to construct the
basic structures of the decomposition: girdles, fyke nets, and
skeletons. These constructions will be enough to determine their cycle
index sums, but for a complete set of functional equations we still
need to provide a construction for the different types of
near-triangulations that are to be inserted into the faces of the
basic structures. In~\cite{cubicplanar}, we will present such a
construction, thus completing the constructive decomposition as well
as its interpretation as functional equations.

Our final aim, however, is not to enumerate triangulations, but cubic
planar graphs. This can be achieved along the following lines. From
the enumeration of triangulations, we can obtain an enumeration of
their duals: cubic planar \emph{maps}. More precisely, since the
triangulations considered in this paper are simple, their duals are
precisely the 3-connected cubic planar maps. From 3-connected cubic
planar maps, we can go to 3-connected cubic planar \emph{graphs},
since by Whitney's Theorem, every such graph has a unique embedding up
to orientation. However, the correspondence between maps and graphs
implied by Whitney's Theorem is not a 2-1 correspondence. Indeed, if a
3-connected cubic planar graph has a reflective symmetry, then it has
only one embedding. Since we distinguished triangulations with
reflective symmetries and triangulations without reflective
symmetries, we will be able to obtain relations between graphs and
maps separately for each of the two cases, thus resulting in an
enumeration of all 3-connected cubic planar graphs. Using the grammar
developed in~\cite{grammar}, we will then obtain an enumeration of all
cubic planar graphs.


\vfill

\noindent
Mihyun Kang, Philipp Sprüssel\\
Institute of Optimization and Discrete Mathematics\\
Graz University of Technology\\
8010 Graz, Austria\\
email: {\tt \{kang,spruessel\}@math.tugraz.at}

\end{document}